\numberwithin{equation}{section}
\newtheorem{thm}{Theorem}[section]
\newtheorem{cor}[thm]{Corollary}
\newtheorem{lem}[thm]{Lemma}
\newtheorem{prop}[thm]{Proposition}
\theoremstyle{definition}
\newtheorem{defn}[thm]{Definition}
\theoremstyle{remark}
\newtheorem{rem}[thm]{Remark}
\newtheorem{example}[thm]{Example}
\newcommand\peak{\operatorname{peak}}
\newcommand\comp{\operatorname{hox}}
\newcommand\height{\operatorname{ht}}
\newcommand\maxid{\operatorname{prmx}}
\newcommand\block{\operatorname{block}}
\newcommand\asc{\operatorname{asc}}
\newcommand\cons{\operatorname{cons}}
\newcommand\hdd{\operatorname{hdd}}
\newcommand\Z{\mathbb{Z}}
\newcommand\db{{d_{\rm b}}}
\newcommand\dr{{d_{\rm r}}}
\newcommand\last{\operatorname{last}}
\newcommand\bval{\operatorname{bval}}
\newcommand\first{\operatorname{first}}
\newcommand\single{\operatorname{single}}
\newcommand\wt{\operatorname{wt}}
\newcommand\one{\operatorname{wone}}
\newcommand\leaf{\operatorname{leaf}}
\newcommand\north{\operatorname{north}}
\newcommand\dasc{\operatorname{dasc}}
\newcommand\aone{\operatorname{aone}}
\newcommand\bone{\operatorname{bone}}
\newcommand\omi{\operatorname{omit}}
\newcommand\crit{\operatorname{crit}}
\newcommand\schp{\mathcal{P}}
\newcommand\Fp{\mathcal{F}}
\newcommand\bdp{\mathcal{B}}
\newcommand\perm{\mathcal{S}}
\newcommand\invi{\mathcal{I}}
\newcommand\invj{\mathcal{J}}
\newcommand\wot{\mathcal{T}}
\newcommand\je{\varepsilon}
\newcommand\jf{\zeta}
\newcommand\jg{\eta}
\newcommand\set[1]{\left\{ #1 \right\}}
\newcommand\abs[1]{\left| #1 \right|}
\newcommand\deli{:}
\newcommand\black[1]{{\color{black} #1}}
\begin{document}

\title{Bijections on pattern avoiding inversion sequences and related objects}

\author{JiSun Huh}
\address[JiSun Huh]{Department of Mathematics, Ajou University, Suwon, 16499, South Korea}
\email{hyunyjia@ajou.ac.kr}

\author{Sangwook Kim$^\dag$}
\address[Sangwook Kim]{Department of Mathematics, Chonnam National University, Gwangju, 61186, South Korea}
\email{swkim.math@chonnam.ac.kr}
\thanks{\dag Corresponding author}

\author{Seunghyun Seo}
\address[Seunghyun Seo]{Department of Mathematics Education, Kangwon National University, Chuncheon, 24341, South Korea}
\email{shyunseo@kangwon.ac.kr}

\author{Heesung Shin}
\address[Heesung Shin]{Department of Mathematics, Inha University, Incheon, 22212, South Korea}
\email{shin@inha.ac.kr}

\keywords{
Schr\"{o}der paths, 
permutations, 
inversion sequences, 
ordered trees,
pattern avoidance, 
direct sum}
\subjclass[2020]{Primary 05A19; Secondary 05A05, 05A15}

\begin{abstract}
The number of inversion sequences avoiding two patterns $101$ and $102$ is known to be the same as
the number of permutations avoiding three patterns $2341$, $2431$, and $3241$.
This sequence also counts the number of Schr\"{o}der paths without triple descents, restricted bicolored Dyck paths,
$(101,021)$-avoiding inversion sequences, and weighted ordered trees.
We provide bijections to integrate them together by introducing $F$-paths.
Moreover, we define three kinds of statistics for each of the objects and count the number of each object with respect to these statistics.
We also discuss direct sums of each object.
\end{abstract}

\date{\today}
\maketitle

\tableofcontents


\section{Introduction}
The study of permutations without certain patterns has long history.
MacMahon~\cite{MacMahon60} enumerated permutations avoiding $123$ and 
Knuth~\cite{Knuth1, Knuth3} showed that for any $\tau \in \mathfrak{S}_3$, the number of permutations of length $n$ that avoid $\tau$ is given by the Catalan number.
The first systematic study of pattern avoidance in permutations was done by Simion and Schmidt~\cite{SimionSchmidt85} in 1985.
Since then, there have been many studies on the patterns of permutations.
We refer the reader to Kitaev~\cite{Kitaev11} for the survey.
The classes defined by avoiding two patterns of length four fall into 38 Wilf equivalence classes.
The largest of these Wilf classes consists of 10 symmetry classes including $(4123,4213)$-avoiding permutations and $(4132, 4213)$-avoiding permutations.
Kremer~\cite{Kremer00} showed that this class is enumerated by the large Schr\"{o}der numbers. 

A Schr\"{o}der path is a lattice path that starts at the origin, ends at the \( x \)-axis, never goes below the \( x \)-axis, and consists of steps \( u=(1,1) \), \( d=(1,-1) \), and \( h=(2,0) \). 
It is well known that the number of Schr\"{o}der paths is enumerated by the large Schr\"{o}der numbers.
There are several results about the number of Schr\"{o}der paths without certain subpaths.
Schr\"{o}der paths without horizontal steps are called Dyck paths, and they are enumerated by the Catalan numbers.
Yan~\cite{Yan09} showed that the set of Schr\"{o}der paths without subpath \( uh \), 
the set of Schr\"{o}der paths without peaks of even height, 
and the set of set partitions avoiding $12312$ have the same cardinality.
Kim~\cite{Kim11} showed that this sequence also counts the set of $2$-distant noncrossing partitions and the set of $3$-Motzkin paths.
Seo and Shin~\cite{SeoShin23} provided the number of $k$-Schr\"{o}der paths without peaks and valleys, for $k=1, 2$.

A sequence \( e=(e_1, e_2, \dots, e_n) \) is called an \emph{inversion sequence of length \( n \)} if \( 0 \leq e_i <i \) for 
all \( i \in [n] \).
The set of inversion sequences of length $n$ is in natural bijection with the set of permutations of $[n]$ via the well-known Lehmer code~\cite{Lehmer60}.
The study of pattern avoidance in inversion sequences was initiated recently by
Corteel, Martinez, Savage, and Weselcouch~\cite{CMSW16}, and 
Mansour and Shattuck~\cite{MansourShattuck15}, independently.
They studied inversion sequences avoiding patterns of length $3$.
In particular, the number of inversion sequences avoiding $021$ is given by the large Schr\"{o}der numbers.
Martinez and Savage~\cite{MartinezSavage18} reframed the notion of a length-three pattern from a word of length $3$ to a triple of binary relations.
Yan and Lin~\cite{YanLin20} classified the Wilf equivalences for inversion sequences avoiding pairs of length-three patterns.
The Wilf classification for patterns of length four was completed by Hong and Li~\cite{HongLi22}.\\

In this paper, we focus on several objects that are enumerated by the integer sequence A106228 in the OEIS \cite{oeis},
including Schr\"{o}der paths without triple descents and weighted ordered trees.
Albert, Homberger, Pantone, Shar, and Vatter~\cite{AHP18} showed that 
the \( (4123, 4132, 4213) \)-avoiding permutations are enumerated by this sequence. 
Martinez and Savage~\cite{MartinezSavage18} conjectured that the number of \( (101,102) \)-avoiding inversion sequences
is the same as the number of \( (4123, 4132, 4213) \)-avoiding permutations, and  
Cao, Jin, and Lin~\cite{CaoJinLin19} confirmed this conjecture using generating functions.
Yan and Lin~\cite{YanLin20} proved that the \( (101, 021) \)-avoiding inversion sequences 
and the restricted bicolored Dyck paths are also counted by this sequence by constructing a bijection 
from the set of weighted ordered trees to the set of restricted bicolored Dyck paths. 

The main goal of this paper is to provide bijections to integrate all these objects.  
To do this, we introduce an \emph{$F$-path}, a lattice path with steps in the set \( F := \{ (a, b) : a \ge 1, b \le 1 \} \cup \{ (0,1)\} \) that starts at the origin and never goes below the line \( y=x \), and then
construct bijections from each object to \( F \)-paths. 
Since \( (4123, 4132, 4213) \)-avoiding permutations can be obtained from \( (2341, 2431, 3241) \)-avoiding permutations by applying reversal and complement, 
we consider \( (2341, 2431, 3241) \)-avoiding permutations 
instead of \( (4123, 4132, 4213) \)-avoiding permutations for structural advantage. 
The following theorem states our main results.

\begin{thm}\label{thm:main}
Let \( n \) be a nonnegative integer. There is a bijection between any pair of the following sets:
\begin{enumerate}
\item The set \( \Fp_{n} \) of $F$-paths of length \( n \).
\item The set \( \schp_n \) of Schr\"{o}der paths of semilength \( n \) without triple descents.
\item The set \( \bdp_{n+1} \) of restricted bicolored Dyck paths of semilength \( n+1 \).
\item The set \(\perm_{n+1} \) of \( (2341, 2431, 3241) \)-avoiding permutations on 
\( \set{1,2,\dots,n+1} \).
\item The set \( \invi_{n+1} \) of \( (101,102) \)-avoiding inversion sequences of length \( n+1 \).
\item The set \( \invj_{n+1} \) of \( (101,021) \)-avoiding inversion sequences of length \( n+1 \).
\item The set \( \wot_{n+1} \) of weighted ordered trees with \( n+1 \) edges.
\end{enumerate}
\end{thm}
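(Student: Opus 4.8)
The plan is to treat the family $\Fp_{n}$ of $F$-paths as a hub: for each of the other six families $X$ I would construct an explicit bijection $\Phi_{X}\colon X\to\Fp_{n}$, with the index conventions built in (so $\schp_{n}$ pairs with $\Fp_{n}$, and $\bdp_{n+1},\perm_{n+1},\invi_{n+1},\invj_{n+1},\wot_{n+1}$ also pair with $\Fp_{n}$). Given all six spoke maps, a bijection between any pair $X,Y$ is $\Phi_{Y}^{-1}\circ\Phi_{X}$, so it suffices to build the six maps together with their inverses. The first task is to record a canonical decomposition of $F$-paths to serve as the common template: from the origin a point on $y=x$ can only be left by a step $(1,1)$ or $(0,1)$, so marking the returns of an $F$-path to the diagonal yields a first-return-type recursion (equivalently, one reads off the step sequence and uses the constraints $b\le 1$ and ``weakly above $y=x$'' to see the shape directly). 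This recursive structure of $\Fp_{n}$ is what I would match against the analogous decompositions of the other six objects.

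For the two lattice-path spokes I would argue locally. A Schr\"{o}der path in $\schp_{n}$ has no triple descent, so its maximal descending runs have length $1$ or $2$; pairing each maximal ascending run $u^{a}$ (or an $h$) with the descending run immediately following it produces a step $(a,b)$ with $b\le 1$, the no-triple-descent condition being exactly what keeps the image in $F$, while ``return to the $x$-axis, stay nonnegative'' becomes ``stay weakly above $y=x$''. The restricted bicolored Dyck paths in $\bdp_{n+1}$ are treated the same way, the two colours of down-steps together with the local restriction accounting for the bound $b\le 1$ and the shift of semilength; in both cases injectivity and surjectivity follow by writing down the inverse that re-expands each $F$-step into runs.

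For the permutation and inversion-sequence spokes I would pass through a Lehmer-type code. Encoding a permutation in $\perm_{n+1}$ by its (suitably normalised) Lehmer code turns the three forbidden patterns $2341,2431,3241$ into a single local condition on the code, which one checks coincides with the condition defining the pair $(101,102)$, and---after an order-reversing involution within blocks---with the pair $(101,021)$; this yields $\perm_{n+1}\leftrightarrow\invi_{n+1}\leftrightarrow\invj_{n+1}$, and mapping the blocks of such a code to the steps of an $F$-path gives each of them a bijection with $\Fp_{n}$. Finally, a weighted ordered tree in $\wot_{n+1}$ is sent to $\Fp_{n}$ by the standard traversal turning an ordered tree with $n+1$ edges into a lattice path, carrying each edge-weight into the matching step; the admissible weight sets are designed precisely so that the resulting steps lie in $F$, and the inverse recovers the tree and its weights from the path.

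The main obstacle is to make each of these translations an equivalence rather than a one-way implication. On the permutation side one must show that the recursive shape of $(2341,2431,3241)$-avoiders matches the $F$-path first-return decomposition exactly---in particular that reassembling an $F$-path never forces a forbidden occurrence---and on the tree side one must check that the weight constraints are neither too strong nor too weak to reproduce precisely $\Fp_{n}$. Arranging the six spokes so that every decomposition is, in effect, the same $F$-path recursion is what turns these checks into bookkeeping; once that common framework is fixed, verifying well-definedness of each map and of its inverse is the remaining work.
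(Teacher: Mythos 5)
Your hub-and-spoke architecture through $\Fp_n$ is exactly the paper's, and three of your spokes (restricted bicolored Dyck paths, $(101,021)$-avoiding inversion sequences, weighted ordered trees) are indeed essentially the local translations you describe. But the other two spokes have genuine gaps. For the Schr\"oder-path spoke, your rule of pairing each maximal ascending run $u^a$ (or each $h$) with the descending run immediately following it produces one $F$-step per maximal ascending run plus one per horizontal step, and that count is not $n$: the path $uudd\in\schp_2$ has a single ascending run and no $h$, so your map would output a length-one path rather than an element of $\Fp_2$ (the paper sends $uudd$ to $(0,1)(1,0)$). The correct correspondence is not local: in the paper's bijection $\phi_{\rm P}$ the last $F$-step is determined by the suffix of $P$ (one of $h$, $ud$, $hd$, $udd$, $hdd$) \emph{together with} the global statistics $\comp(Y)$ and $\comp(Z)$ of interior subpaths, and the recursion performs surgery on $P$ (e.g.\ replacing $XuYhd$ by $\hat P=XhY$ and appending $(\comp(Y)+2,\,1)$). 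Some such nonlocal bookkeeping is forced on you, since the bijection must carry $\comp(P)$ to $\height(Q)$.

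For the permutation spoke, you assert that a suitably normalised Lehmer code turns $(2341,2431,3241)$-avoidance into a local condition on the code coinciding with $(101,102)$-avoidance. This is the crux of the entire permutation/inversion-sequence side and you give no argument for it; for the standard left-inversion code it fails already at $n=4$, since $2341$ has code $(0,0,0,3)$, which avoids both $101$ and $102$, so the code does not restrict to a bijection between $\perm_4$ and $\invi_4$ even though both have $21$ elements. The fact that the equinumerosity of $\perm_{n+1}$ and $\invi_{n+1}$ was previously known only through generating functions (Cao--Jin--Lin) is strong evidence that no off-the-shelf coding does this. The paper instead proves a structural lemma (Lemma~\ref{lem:indecomperm}) describing the exact shape of a $(2341,2431,3241)$-avoiding permutation in terms of the position $x$ of $n+1$ and the extrema $z,w$ on either side of it, and builds a recursive bijection $\phi_{\rm S}$ whose last $F$-step records the block counts of the direct summands created when $n+1$ is deleted. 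Without a replacement for that lemma, or a verified code, the bijections involving $\perm_{n+1}$ do not go through.
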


For an $F$-path $Q$ with the terminal point $(x_n, y_n)$, 
we define the height of $Q$ to be $\height(Q)\coloneqq y_{n}-x_{n}$, 
which plays a crucial role in the constructions of these bijections.
In particular, these bijections provide correspondences 
among Schr\"{o}der paths 
without triple descents having 
$k$ horizontal steps on the $x$-axis, 
$(2341, 2431, 3241)$-avoiding permutations with $k+1$ blocks, 
and $F$-paths with height $k$. 
For the other objects, we define corresponding statistics as well.
In addition, for each object, we provide additional statistics that our bijections preserve.
Moreover, we  enumerate the number of these objects with respect to several statistics
by applying Lagrange inversion formula to the generating function for \( F\)-paths.

The rest of the paper is organized as follows.
Sections~\ref{sec:F-P-B} through \ref{sec:T} contain the bijections from each object 
in Theorem~\ref{thm:main} to the set of $F$-paths.
In Section~\ref{sec:F-P-B}, we construct the bijections for Schr\"{o}der paths without triple descents and restricted bicolored Dyck paths.
Section~\ref{sec:S} is dedicated to $(2341, 2431, 3241)$-avoiding permutations.
In Section~\ref{sec:I-J}, 
$(101,102)$-avoiding inversion sequences and $(101,021)$-avoiding inversion sequences
are discussed. Sections~\ref{sec:T} contains the bijection from the set of weighted ordered trees to the set of $F$-paths.
In Section~\ref{sec:enum} we provide the number of objects in Theorem~\ref{thm:main} with respect to several statistics.
In the last section, we discuss the direct sums of these lattice paths and inversion sequences, 
which are consistent with the well-known decompositions of permutations and ordered trees.


\section{{\it F}-paths and related lattice paths}
\label{sec:F-P-B}

In \( \Z^2 \), a \emph{lattice path of length \( n \) with steps in the set \( S \)} is a sequence of points, 
\[ 
((x_0,y_0), (x_1,y_1), \dots,  (x_{n},y_{n})),
\]
such that each consecutive difference \( s_i=(x_i-x_{i-1},y_i-y_{i-1}) \) lies in \( S \). We also write a lattice path as a sequence of steps, \( s_1s_2 \dots s_{n} \). The starting point of a lattice path is assumed to be \( (0,0) \) or is given by the context. 

\subsection{{\it F}-paths}
\label{sec:F}
Let us define $F$-paths.
\begin{defn}
An \emph{\( F \)-path of length \( n \)} is a lattice path 
\[ Q=((0,0), (x_1,y_1), \dots,  (x_{n},y_{n})) \] 
with steps in the set 
\( F \coloneqq \{ (a,b) \deli a\geq 1, ~b\leq 1\} \cup \{ (0,1) \} \) 
that satisfies 
\( x_i \leq y_i \) for each \( i=1,2,\dots, n \). 
We denote by \(\Fp_{n} \) the set of \( F \)-paths of length \( n \). 
\end{defn}

Define the height of $Q =((0,0), (x_1,y_1), \dots,  (x_{n},y_{n})) \in\Fp_{n}$
as \( y_{n}-x_{n} \), and denote it by $\height(Q)$.
Let $\north(Q)$ be the number of steps $(0,1)$,
$\aone(Q)$ be the number of steps $(a,b)$ with $a=1$ in $Q$,
and $\bone(Q)$ be the number of steps $(a,b)$ with $b=1$ in $Q$.
It is easy to show that $$\height(Q) \le \north(Q) \le \bone(Q)$$ 
for any $F$-path $Q$.
Figure~\ref{fig:F2} shows all the \( F \)-paths of length \( 2 \). 
Note that if \( \height(Q)=m \), then it can be written uniquely as 
\[ Q=Q_1 (0,1) Q_2 (0,1) \dots (0,1) Q_{m+1}, \] 
where each segment \( Q_i \) is an \( F \)-path with \( \height(Q_i)=0 \) allowed to be empty.  
\begin{figure}[t]
\centering

\begin{tikzpicture}[scale=0.7]
\foreach \i in {0,1,2}
\foreach \j in {0,1,2}
\filldraw[fill=gray!70, color=gray!70] (\i,\j) circle (2.5pt);

\draw (0,0) -- (2,0);
\draw (0,0) -- (0,2);
\draw[color=gray!70] (0,0) -- (2,2);

\coordinate (0) at (0,0);
\coordinate (1) at (0,1);
\coordinate (2) at (1,1);

\foreach \i in {0,1,2}
\filldraw (\i) circle (2.5pt);
\draw[ultra thick]
(0) -- (1) -- (2);

\node at (1,-0.5) {\( Q^{(1)} \)};
\end{tikzpicture}
\qquad
\begin{tikzpicture}[scale=0.7]
\foreach \i in {0,1,2}
\foreach \j in {0,1,2}
\filldraw[fill=gray!70, color=gray!70] (\i,\j) circle (2.5pt);

\draw (0,0) -- (2,0);
\draw (0,0) -- (0,2);
\draw[color=gray!70] (0,0) -- (2,2);

\coordinate (0) at (0,0);
\coordinate (1) at (0,1);
\coordinate (2) at (2,2);

\foreach \i in {0,1,2}
\filldraw (\i) circle (2.5pt);
\draw[ultra thick]
(0) -- (1) -- (2);

\node at (1,-0.5) {\( Q^{(2)} \)};
\end{tikzpicture}
\qquad
\begin{tikzpicture}[scale=0.7]
\foreach \i in {0,1,2}
\foreach \j in {0,1,2}
\filldraw[fill=gray!70, color=gray!70] (\i,\j) circle (2.5pt);

\draw (0,0) -- (2,0);
\draw (0,0) -- (0,2);
\draw[color=gray!70] (0,0) -- (2,2);

\coordinate (0) at (0,0);
\coordinate (1) at (1,1);
\coordinate (2) at (2,2);

\foreach \i in {0,1,2}
\filldraw (\i) circle (2.5pt);
\draw[ultra thick]
(0) -- (1) -- (2);

\node at (1,-0.5) {\( Q^{(3)} \)};
\end{tikzpicture}
\qquad
\begin{tikzpicture}[scale=0.7]
\foreach \i in {0,1,2}
\foreach \j in {0,1,2}
\filldraw[fill=gray!70, color=gray!70] (\i,\j) circle (2.5pt);

\draw (0,0) -- (2,0);
\draw (0,0) -- (0,2);
\draw[color=gray!70] (0,0) -- (2,2);

\coordinate (0) at (0,0);
\coordinate (1) at (0,1);
\coordinate (2) at (1,2);

\foreach \i in {0,1,2}
\filldraw (\i) circle (2.5pt);
\draw[ultra thick]
(0) -- (1) -- (2);

\node at (1,-0.5) {\( Q^{(4)} \)};
\end{tikzpicture}
\qquad
\begin{tikzpicture}[scale=0.7]
\foreach \i in {0,1,2}
\foreach \j in {0,1,2}
\filldraw[fill=gray!70, color=gray!70] (\i,\j) circle (2.5pt);

\draw (0,0) -- (2,0);
\draw (0,0) -- (0,2);
\draw[color=gray!70] (0,0) -- (2,2);

\coordinate (0) at (0,0);
\coordinate (1) at (1,1);
\coordinate (2) at (1,2);

\foreach \i in {0,1,2}
\filldraw (\i) circle (2.5pt);
\draw[ultra thick]
(0) -- (1) -- (2);

\node at (1,-0.5) {\( Q^{(5)} \)};
\end{tikzpicture}
\qquad
\begin{tikzpicture}[scale=0.7]
\foreach \i in {0,1,2}
\foreach \j in {0,1,2}
\filldraw[fill=gray!70, color=gray!70] (\i,\j) circle (2.5pt);

\draw (0,0) -- (2,0);
\draw (0,0) -- (0,2);
\draw[color=gray!70] (0,0) -- (2,2);

\coordinate (0) at (0,0);
\coordinate (1) at (0,1);
\coordinate (2) at (0,2);

\foreach \i in {0,1,2}
\filldraw (\i) circle (2.5pt);
\draw[ultra thick]
(0) -- (1) -- (2);

\node at (1,-0.5) {\( Q^{(6)} \)};
\end{tikzpicture}

\caption{The \( F \)-paths of length \( 2 \).}\label{fig:F2}
\end{figure}
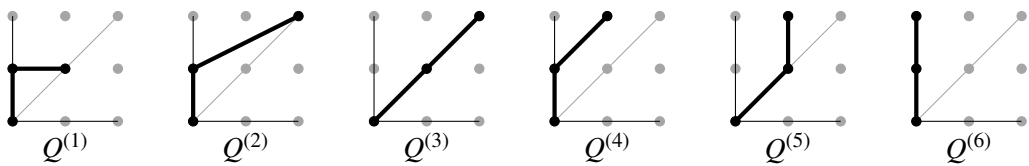 

There exists an involution $I$ on the set 
$F=\set{ (a,b) \deli a\geq 1, ~b\leq 1} \cup \set{ (0,1) }$ defined by 
$$
I(a,b) \coloneqq \begin{cases}
(0,1) &\text{if $(a, b) = (0, 1),$}\\
(2-b, 2-a) &\text{otherwise.}
\end{cases}
$$
Let us define an involution $\phi_{\rm F}$ on $\Fp_{n}$ by 
$\phi_{\rm F}(s_1\dots s_n) \coloneqq I(s_1) \dots I(s_n)$.
The involution $\phi_{\rm F}$ satisfies
\begin{align*}
\height(\phi_{\rm F}(Q)) &= \height(Q),\\
\north(\phi_{\rm F}(Q)) &= \north(Q),\\
\aone(\phi_{\rm F}(Q)) &= \bone(Q) - \north(Q),\\
\bone(\phi_{\rm F}(Q)) &= \aone(Q) + \north(Q).
\end{align*}


\subsection{Schr\"{o}der paths without triple descents}
\label{sec:P}
A Schr\"{o}der path of semilength \( n \) is a lattice path from \( (0,0) \) to \( (2n,0) \) that does not go below the \( x \)-axis and consists of up steps \( u=(1,1) \), down steps \( d=(1,-1) \), and horizontal steps \( h=(2,0) \). 
For a Schr\"{o}der path \( P \), 
two consecutive steps \( ud \), $du$, \( uu \) and \( dd \) are called a \emph{peak}, a \emph{valley}, a \emph{double ascent}, and a \emph{double descent}, respectively. 
The number of peaks in \( P \) is denoted by \( \peak(P) \). 
The total number of horizontal steps and double descents in $P$ is denoted by $\hdd(P)$.
A \emph{small Schr\"{o}der path} is a Schr\"{o}der path having no horizontal step on the \( x \)-axis. For a Schr\"{o}der path \( P \), let \( \comp(P) \) denote the number of horizontal steps on the \( x \)-axis. 
If \( \comp(P)=m \), then we decompose \( P \) into \( 2m+1 \) segments as \( P=P_1h P_2 \dots h P_{m+1} \), where each segment \( P_i \) is a small Schr\"{o}der path allowed to be empty.

For a Schr\"{o}der path \( P \), three consecutive steps \( ddd \) is called a \emph{triple descent}. 
We denote by \( \schp_n \) the set of Schr\"{o}der paths of semilength \( n \) without triple descents.
Figure~\ref{fig:S2} shows all the Schr\"{o}der paths in \( \schp_2 \).

\begin{figure}[t]
\centering

\begin{tikzpicture}[scale=0.7]
\foreach \i in {0,1,2,3,4}
\foreach \j in {0,1,2}
\filldraw[fill=gray!70, color=gray!70] (0.5*\i,0.5*\j) circle (2.5pt);

\draw (0,0) -- (2,0);
\draw (0,0) -- (0,1);

\coordinate (0) at (0,0);
\coordinate (1) at (1/2,1/2);
\coordinate (2) at (2/2,2/2);
\coordinate (3) at (3/2,1/2);
\coordinate (4) at (4/2,0/2);

\foreach \i in {0,1,2,3,4}
\filldraw (\i) circle (2.5pt);
\draw[ultra thick]
(0) -- (1) -- (2) -- (3) --(4);

\node at (1,-0.5) {\( P^{(1)} \)};
\end{tikzpicture}
\qquad 
\begin{tikzpicture}[scale=0.7]
\foreach \i in {0,1,2,3,4}
\foreach \j in {0,1,2}
\filldraw[fill=gray!70, color=gray!70] (0.5*\i,0.5*\j) circle (2.5pt);

\draw (0,0) -- (2,0);
\draw (0,0) -- (0,1);

\coordinate (0) at (0,0);
\coordinate (1) at (1/2,1/2);
\coordinate (2) at (3/2,1/2);
\coordinate (3) at (4/2,0/2);

\foreach \i in {0,1,2,3}
\filldraw (\i) circle (2.5pt);
\draw[ultra thick]
(0) -- (1) -- (2) -- (3);

\node at (1,-0.5) {\( P^{(2)} \)};
\end{tikzpicture}
\qquad
\begin{tikzpicture}[scale=0.7]
\foreach \i in {0,1,2,3,4}
\foreach \j in {0,1,2}
\filldraw[fill=gray!70, color=gray!70] (0.5*\i,0.5*\j) circle (2.5pt);

\draw (0,0) -- (2,0);
\draw (0,0) -- (0,1);

\coordinate (0) at (0,0);
\coordinate (1) at (1/2,1/2);
\coordinate (2) at (2/2,0/2);
\coordinate (3) at (3/2,1/2);
\coordinate (4) at (4/2,0/2);

\foreach \i in {0,1,2,3,4}
\filldraw (\i) circle (2.5pt);
\draw[ultra thick]
(0) -- (1) -- (2) -- (3) --(4);

\node at (1,-0.5) {\( P^{(3)} \)};
\end{tikzpicture}
\qquad
\begin{tikzpicture}[scale=0.7]
\foreach \i in {0,1,2,3,4}
\foreach \j in {0,1,2}
\filldraw[fill=gray!70, color=gray!70] (0.5*\i,0.5*\j) circle (2.5pt);

\draw (0,0) -- (2,0);
\draw (0,0) -- (0,1);

\coordinate (0) at (0,0);
\coordinate (1) at (2/2,0/2);
\coordinate (2) at (3/2,1/2);
\coordinate (3) at (4/2,0/2);

\foreach \i in {0,1,2,3}
\filldraw (\i) circle (2.5pt);
\draw[ultra thick]
(0) -- (1) -- (2) -- (3);

\node at (1,-0.5) {\( P^{(4)} \)};
\end{tikzpicture}
\qquad
\begin{tikzpicture}[scale=0.7]
\foreach \i in {0,1,2,3,4}
\foreach \j in {0,1,2}
\filldraw[fill=gray!70, color=gray!70] (0.5*\i,0.5*\j) circle (2.5pt);

\draw (0,0) -- (2,0);
\draw (0,0) -- (0,1);

\coordinate (0) at (0,0);
\coordinate (1) at (1/2,1/2);
\coordinate (2) at (2/2,0/2);
\coordinate (3) at (4/2,0/2);

\foreach \i in {0,1,2,3}
\filldraw (\i) circle (2.5pt);
\draw[ultra thick]
(0) -- (1) -- (2) -- (3);

\node at (1,-0.5) {\( P^{(5)} \)};
\end{tikzpicture}
\qquad
\begin{tikzpicture}[scale=0.7]
\foreach \i in {0,1,2,3,4}
\foreach \j in {0,1,2}
\filldraw[fill=gray!70, color=gray!70] (0.5*\i,0.5*\j) circle (2.5pt);

\draw (0,0) -- (2,0);
\draw (0,0) -- (0,1);

\coordinate (0) at (0,0);
\coordinate (1) at (2/2,0/2);
\coordinate (2) at (4/2,0/2);

\foreach \i in {0,1,2}
\filldraw (\i) circle (2.5pt);
\draw[ultra thick]
(0) -- (1) -- (2);

\node at (1,-0.5) {\( P^{(6)} \)};
\end{tikzpicture}
\caption{The Schr\"{o}der paths of semilength $2$ without triple descents.}\label{fig:S2}
\end{figure}
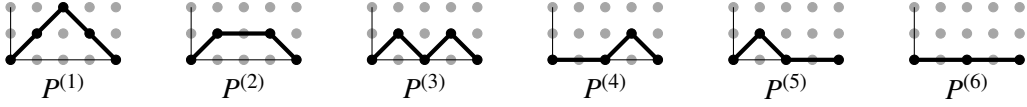 

Let \( \mathcal{L}A \) denote the set of paths \( L\in\mathcal{L} \) that ends with a sequence of steps \( s\in A \), where \( \mathcal{L} \) is a set of lattice paths and \( A \) is a set of sequences of steps. Note that \( \schp_n \) can be partitioned into five subsets \( \schp_n \{ s \} \) for \( s\in \{ h, ud, hd, udd, hdd \} \). The following theorem gives several pairs of one-to-one correspondence sets.

\begin{thm}\label{thm:P-F}
For a nonnegative integer \( n \), the following hold.
\begin{enumerate} 
\item \( |\schp_n\{ h \}|=|\mathcal{F}_n\{ (0,1) \}|\).
\item \( |\schp_n\{ ud \}|=|\mathcal{F}_n\{ (1,1) \}|\).
\item \( |\schp_n\{ hd \}|=|\mathcal{F}_n\{ (a,1) \deli a\geq 2 \}|\).
\item \( |\schp_n\{ udd \}|=|\mathcal{F}_n\{ (1,b) \deli b\leq 0 \}|\).
\item \( |\schp_n\{ hdd \}|=|\mathcal{F}_n\{ (a,b) \deli a\geq 2, ~b\leq 0 \}|\).
\end{enumerate}
Consequently, \( |\schp_n|=|\mathcal{F}_n| \). 
\end{thm}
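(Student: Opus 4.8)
The plan is to prove the five identities by a single induction on $n$ and then obtain $|\schp_n| = |\mathcal{F}_n|$ by summing them. For $n = 0$ every set involved is the singleton consisting of the empty path and all of the subsets $\schp_0\{s\}$, $\mathcal{F}_0\{t\}$ are empty, so (1)--(5) read $0 = 0$ and $|\schp_0| = 1 = |\mathcal{F}_0|$ directly. Fix $n \ge 1$, assume $|\schp_k| = |\mathcal{F}_k|$ for all $0 \le k < n$, and record the two partitions $\schp_n = \bigsqcup_s \schp_n\{s\}$ and $\mathcal{F}_n = \bigsqcup_t \mathcal{F}_n\{t\}$: the first holds because, a triple descent being forbidden, the final descent run of a nonempty path in $\schp_n$ has length $1$ or $2$ and is preceded by $u$ or $h$ (or the path ends in $h$), so it ends with exactly one of $h, ud, hd, udd, hdd$; the second because the last step of a nonempty $F$-path lies in exactly one of $(0,1)$, $(1,1)$, $\{(a,1): a\ge 2\}$, $\{(1,b): b\le 0\}$, $\{(a,b): a\ge 2,\ b\le 0\}$. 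Once (1)--(5) are known for this $n$, adding them yields the final assertion. Identities (1) and (2) are immediate: deleting the terminal $h$ is a bijection $\schp_n\{h\}\to\schp_{n-1}$ and deleting the last step a bijection $\mathcal{F}_n\{(0,1)\}\to\mathcal{F}_{n-1}$ (appending $(0,1)$ is always legal, raising only the $y$-coordinate), so $|\schp_n\{h\}| = |\schp_{n-1}| = |\mathcal{F}_{n-1}| = |\mathcal{F}_n\{(0,1)\}|$ by induction, and the same argument with $ud$ and $(1,1)$ (legal because it preserves $y-x$) gives (2).

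The work is in (3), (4), (5), where stripping the terminal block no longer returns an object of the same type, so the idea is to reduce both sides to products of strictly smaller copies and match them through the induction hypothesis. On the Schr\"oder side, deleting the terminal $hd$ (or $udd$) leaves a triple-descent-free path ending at height $1$, and cutting it at the last point where it touches the $x$-axis writes it uniquely as $R\,u\,T$ with $R$ a triple-descent-free Schr\"oder path and $T$ living at height $\ge 1$; identifying $R$ with a member of $\schp_i$ and the downward translate of $T$ with a member of $\schp_j$ (where $i+j = n-2$), and checking that the inserted $u$ prevents a $ddd$ at either junction, gives $\schp_n\{hd\} \cong \schp_n\{udd\} \cong \bigsqcup_{i+j=n-2}\schp_i\times\schp_j$; similarly, deleting the terminal $hdd$ leaves a triple-descent-free path ending at height $2$, which splits at its last touchdowns on heights $0$ and $1$ as $R\,u\,S\,u\,T$ (with $R$, the translate of $S$ down by one, and the translate of $T$ down by two in $\schp_i$, $\schp_j$, $\schp_k$ respectively), giving $\schp_n\{hdd\}\cong\bigsqcup_{i+j+k=n-3}\schp_i\times\schp_j\times\schp_k$. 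On the $F$-path side I would exploit the canonical factorisation $Q = Q_1(0,1)Q_2\cdots(0,1)Q_{\height(Q)+1}$ into height-$0$ pieces. Writing a path in $\mathcal{F}_n\{(a,1): a\ge 2\}$ as $Q'(a,1)$, legality of the final step forces $a-1 \le \height(Q')$, which is exactly the room needed to cut $Q'$ at its $(a-1)$-st $(0,1)$-separator into $A(0,1)B$ with $\height(A) = a-2$; since $a = \height(A)+2$ is then recovered, this is a bijection $\mathcal{F}_n\{(a,1): a\ge 2\}\cong\bigsqcup_{i+j=n-2}\mathcal{F}_i\times\mathcal{F}_j$, and the mirror construction (or the involution $\phi_{\rm F}$, which sends the step class $(1,b)$ to $(a,1)$) handles $\mathcal{F}_n\{(1,b): b\le 0\}$. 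For $\mathcal{F}_n\{(a,b): a\ge2,\ b\le 0\}$, writing $Q = Q'(a,b)$, legality reads $a-b \le \height(Q')$, exactly enough to cut $Q'$ at two $(0,1)$-separators into $A(0,1)B(0,1)C$ with $\height(A) = -b$ and $\height(B) = a-2$, giving $\mathcal{F}_n\{(a,b): a\ge2,\ b\le0\}\cong\bigsqcup_{i+j+k=n-3}\mathcal{F}_i\times\mathcal{F}_j\times\mathcal{F}_k$. Applying the induction hypothesis to each factor (all indices are $<n$) identifies these products with their Schr\"oder-side counterparts, proving (3), (4), (5); summing (1)--(5) gives $|\schp_n| = |\mathcal{F}_n|$.

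The step I expect to be the main obstacle is verifying that the decompositions behind (3)--(5) really are mutually inverse bijections and not just numerical coincidences. On the Schr\"oder side one must check that cutting a height-$1$ or height-$2$ path at its last touchdowns and re-gluing never creates a $ddd$ --- the inserted $u$-steps being precisely what prevents this --- and keep the (semi)length accounting straight. On the $F$-path side one must check that the prescribed $(0,1)$-separators exist exactly when the terminal step is legal, that the interior piece $A$ is forced to have the stated height while $B$ (and $C$) then range over all $F$-paths with no leftover constraint, and that re-inserting the separators inverts the cuts. Everything else in the argument is routine bookkeeping.
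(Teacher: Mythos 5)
Your proof is correct, but it follows a genuinely different architecture from the paper's. The paper constructs one explicit recursive bijection $\phi_{\rm P}:\schp_n\to\Fp_n$ that peels off the terminal block and, in the three hard cases, re-encodes the removed data inside a \emph{single} smaller Schr\"oder path via the substitution $uY\mapsto hY$ (so $XuYhd\mapsto XhY$, etc.), the legality of the appended step being guaranteed by the invariant $\comp(P)=\height(\phi_{\rm P}(P))$. You instead keep the pieces separate: you decompose each suffix class on both sides into disjoint unions of Cartesian products of strictly smaller instances (last-return decompositions $RuT$, $RuSuT$ at heights $0$ and $1$ on the Schr\"oder side; cuts at prescribed $(0,1)$-separators of the canonical height-$0$ factorization on the $F$-path side, with the interior factors' heights recording $a$ and $b$), and you close the argument by strong induction on cardinalities, using $\phi_{\rm F}$ to dispose of case (4). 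Both arguments rest on the same two structural facts, and your separator/touchdown bookkeeping checks out (in particular the lengths $i+j=n-2$ and $i+j+k=n-3$ match on the two sides, and the inserted $u$'s and the breaking $h$ do prevent new triple descents). What the paper's version buys is a single statistic-preserving bijection with $\comp=\height$ ready-made for transporting the further statistics $\hdd\mapsto\north$ and $\peak\mapsto\aone$ in Remark~\ref{rem:P-F}; what yours buys is a cleaner equinumerosity argument in which the five refined identities are proved directly and the only invariant carried through the induction is cardinality. If you wanted the explicit bijection as well, you would need to assemble your product decompositions recursively and verify that the result respects the statistics, which is extra work your write-up does not do.
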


\begin{proof} 
From Schr\"{o}der paths without triple descents, 
we construct a bijection to $F$-paths. Define
\[ 
\phi_{\rm P} : \bigcup_{n\geq 0}\schp_n \to \bigcup_{n\geq 0} \mathcal{F}_n 
\] 
such that
$$\comp(P)=\height(\phi_{\rm P}(P))$$
recursively as follows.
Let $\phi_{\rm P}(\emptyset) \coloneqq \emptyset$ with $\comp(\emptyset)=\height(\emptyset)=0$. 
Now suppose that $n \geq 1$ and $P \in \schp_n$.

\begin{enumerate}[(1)]
\item \label{P1} If \( P \in \schp_n\{ h \} \), 
then we can decompose $P$ into two segments as $P=X h$ 
such that \( X \) is a Schr\"{o}der path without triple descents.
Define 
$$\phi_{\rm P}(P) \coloneqq \phi_{\rm P}(\hat{P})(0,1),$$
where $\hat{P} \coloneqq X$.
Due to 
\( \hat{P}\in \schp_{n-1} \), it is clear that \( \phi_{\rm P}(P) \in \Fp_{n} \) and
$\comp(\hat{P})=\height(\phi_{\rm P}(\hat{P}))$. 
Thus,
$$\comp(P)=\comp(\hat{P})+1=\height(\phi_{\rm P}(\hat{P}))+1=\height(\phi_{\rm P}(P)).$$

\[
\begin{tikzpicture}[scale=0.35]

\draw [-stealth] (0,0) -- (20,0);
\draw [-stealth] (0,0) -- (0,4);

\coordinate (0) at (0,0);
\coordinate (1) at (16,0);
\coordinate (2) at (18,0);

\coordinate (a) at (0,3);
\coordinate (b) at (16,3);

\foreach \i in {0,...,2}{
\filldraw (\i) circle (5pt);
}


\draw[ultra thick] 
	(0) .. controls (a) and (b) .. (1) -- (2);
\draw[thick] 
	(0) -- (1);

\node at (8, 1) {\( X \)};

\node at (4, -1) [right] {\( P = X h \)};
\end{tikzpicture}
\qquad
\begin{tikzpicture}[scale=0.35]

\draw [-stealth] (0,0) -- (18,0);
\draw [-stealth] (0,0) -- (0,4);

\coordinate (0) at (0,0);
\coordinate (1) at (16,0);

\coordinate (a) at (0,3);
\coordinate (b) at (16,3);

\foreach \i in {0,...,1}{
\filldraw (\i) circle (5pt);
}


\draw[ultra thick] 
	(0) .. controls (a) and (b) .. (1);
\draw[thick] 
	(0) -- (1);

\node at (8, 1) {\( X \)};

\node at (4, -1) [right] {\( \hat{P} \coloneqq X \)};
\end{tikzpicture}
\]

\item \label{P2} If \( P \in \schp_n\{ ud \} \), 
then we can decompose $P$ into three segments as $P=X u d$ 
such that \( X \) is a Schr\"{o}der path without triple descents.
Define 
$$\phi_{\rm P}(P) \coloneqq \phi_{\rm P}(\hat{P})(1,1),$$
where $\hat{P} \coloneqq X$.
Again, due to 
\( \hat{P}\in \schp_{n-1} \), we have \( \phi_{\rm P}(P) \in \Fp_{n} \) and 
$$\comp(P)=\comp(\hat{P})=\height(\phi_{\rm P}(\hat{P}))=\height(\phi_{\rm P}(P)).$$

\[
\begin{tikzpicture}[scale=0.35]

\draw [-stealth] (0,0) -- (20,0);
\draw [-stealth] (0,0) -- (0,4);

\coordinate (0) at (0,0);
\coordinate (1) at (16,0);
\coordinate (2) at (17,1);
\coordinate (3) at (18,0);

\coordinate (a) at (0,3);
\coordinate (b) at (16,3);

\foreach \i in {0,...,3}{
\filldraw (\i) circle (5pt);
}


\draw[ultra thick] 
	(0) .. controls (a) and (b) .. (1) -- (2) -- (3);
\draw[thick] 
	(0) -- (1);

\node at (8, 1) {\( X \)};

\node at (4, -1) [right] {\( P = X u d \)};
\end{tikzpicture}
\qquad
\begin{tikzpicture}[scale=0.35]

\draw [-stealth] (0,0) -- (18,0);
\draw [-stealth] (0,0) -- (0,4);

\coordinate (0) at (0,0);
\coordinate (1) at (16,0);

\coordinate (a) at (0,3);
\coordinate (b) at (16,3);

\foreach \i in {0,...,1}{
\filldraw (\i) circle (5pt);
}


\draw[ultra thick] 
	(0) .. controls (a) and (b) .. (1);
\draw[thick] 
	(0) -- (1);

\node at (8, 1) {\( X \)};

\node at (4, -1) [right] {\( \hat{P} \coloneqq X \)};
\end{tikzpicture}
\]

\item \label{P3}
If \( P \in \schp_n\{ hd \} \), 
then we can decompose $P$ into five segments as $P=X u Y h d$
such that \( X \) and \( Y \) are Schr\"{o}der paths without triple descents.
Define 
$$\phi_{\rm P}(P) \coloneqq \phi_{\rm P}(\hat{P})(k+2,1),$$
where $\hat{P} \coloneqq XhY$ and $\comp(Y) = k$.
Due to
\( \hat{P}\in \schp_{n-1} \) and \( \height(\phi_{\rm P}(\hat{P}))\geq k+1 \), we have \( \phi_{\rm P}(P) \in \Fp_{n} \) and
$$\comp(P)=\comp(\hat{P})-(k+1)=\height(\phi_{\rm P}(\hat{P}))-(k+1)=\height(\phi_{\rm P}(P)).$$

\[
\begin{tikzpicture}[scale=0.35]

\draw [-stealth] (0,0) -- (20,0);
\draw [-stealth] (0,0) -- (0,4);

\coordinate (0) at (0,0);
\coordinate (1) at (7,0);
\coordinate (2) at (8,1);
\coordinate (3) at (15,1);
\coordinate (4) at (17,1);
\coordinate (5) at (18,0);

\coordinate (a) at (0,3);
\coordinate (b) at (7,3);
\coordinate (c) at (8,4);
\coordinate (d) at (15,4);

\foreach \i in {0,...,5}{
\filldraw (\i) circle (5pt);
}


\draw[ultra thick] 
	(0) .. controls (a) and (b) .. (1) --
	(2) .. controls (c) and (d) .. (3) --
	(4) -- (5);
\draw[thick] 
	(0) -- (1)
	(2) -- (3);

\node at (7/2, 1) {\( X \)};
\node at (23/2, 2) {\( Y \)};

\node at (4, -1) [right] {\( P = X u Y h d \)};
\end{tikzpicture}
\qquad
\begin{tikzpicture}[scale=0.35]

\draw [-stealth] (0,0) -- (18,0);
\draw [-stealth] (0,0) -- (0,4);

\coordinate (0) at (0,0);
\coordinate (1) at (7,0);
\coordinate (2) at (9,0);
\coordinate (3) at (16,0);

\coordinate (a) at (0,3);
\coordinate (b) at (7,3);
\coordinate (c) at (9,3);
\coordinate (d) at (16,3);

\foreach \i in {0,...,3}{
\filldraw (\i) circle (5pt);
}


\draw[ultra thick] 
	(0) .. controls (a) and (b) .. (1) --
	(2) .. controls (c) and (d) .. (3);
\draw[thick] 
	(0) -- (1)
	(2) -- (3);
	
\node at (7/2, 1) {\( X \)};
\node at (25/2, 1) {\( Y \)};

\node at (4, -1) [right] {\( \hat{P} \coloneqq X h Y \)};
\end{tikzpicture}
\]

\item \label{P4} If \( P \in \schp_n\{ udd \} \), 
then we can decompose $P$ into six segments as $P=X u Z u d d$
such that \( X \) and \( Z \) are Schr\"{o}der paths without triple descents.
Define 
$$\phi_{\rm P}(P) \coloneqq \phi_{\rm P}(\hat{P})(1,-j),$$
where $\hat{P} \coloneqq X h Z$ and $\comp(Z) = j$.
Again, \( \hat{P} \in \schp_{n-1} \) and \( \height(\phi_{\rm P}(\hat{P}))\geq j+1 \) yields that \( \phi_{\rm P}(P) \in \Fp_{n} \) and
$$\comp(P)=\comp(\hat{P})-(j+1)=\height(\phi_{\rm P}(\hat{P}))-(j+1)=\height(\phi_{\rm P}(P)).$$

\[
\begin{tikzpicture}[scale=0.35]

\draw [-stealth] (0,0) -- (20,0);
\draw [-stealth] (0,0) -- (0,4);

\coordinate (0) at (0,0);
\coordinate (1) at (7,0);
\coordinate (2) at (8,1);
\coordinate (3) at (15,1);
\coordinate (4) at (16,2);
\coordinate (5) at (17,1);
\coordinate (6) at (18,0);

\coordinate (a) at (0,3);
\coordinate (b) at (7,3);
\coordinate (c) at (8,4);
\coordinate (d) at (15,4);

\foreach \i in {0,...,6}{
\filldraw (\i) circle (5pt);
}


\draw[ultra thick] 
	(0) .. controls (a) and (b) .. (1) -- 
	(2) .. controls (c) and (d) .. (3) --
	(4) -- (5) -- (6);

\draw[thick] 
	(0) -- (1)
	(2) -- (3);

\node at (7/2, 1) {\( X \)};
\node at (23/2, 2) {\( Z \)};

\node at (4, -1) [right] {\( P = X u Z u d d \)};
\end{tikzpicture}
\qquad
\begin{tikzpicture}[scale=0.35]

\draw [-stealth] (0,0) -- (18,0);
\draw [-stealth] (0,0) -- (0,4);

\coordinate (0) at (0,0);
\coordinate (1) at (7,0);
\coordinate (2) at (9,0);
\coordinate (3) at (16,0);

\coordinate (a) at (0,3);
\coordinate (b) at (7,3);
\coordinate (c) at (9,3);
\coordinate (d) at (16,3);

\foreach \i in {0,...,3}{
\filldraw (\i) circle (5pt);
}


\draw[ultra thick] 
	(0) .. controls (a) and (b) .. (1) --
	(2) .. controls (c) and (d) .. (3);

\draw[thick] 
	(0) -- (1)
	(2) -- (3);

\node at (7/2, 1) {\( X \)};
\node at (25/2, 1) {\( Z \)};

\node at (4, -1) [right] {\( \hat{P} \coloneqq X h Z \)};
\end{tikzpicture}
\]

\item \label{P5} If \( P \in \schp_n\{ hdd \} \), 
then we can decompose $P$ into eight segments as $P=X u Y u Z h d d$
such that \( X \), \( Y \), and \( Z \) are Schr\"{o}der paths without triple descents.
Define 
$$\phi_{\rm P}(P) \coloneqq \phi_{\rm P}(\hat{P})(k+2,-j),$$
where $\hat{P} \coloneqq XhYhZ$, 
$\comp(Y) = k$, and $\comp(Z) = j$.
Due to
\( \hat{P} \in \schp_{n-1} \) and \( \height(\phi_{\rm P}(\hat{P}))\geq k+j+2 \), it follows that \( \phi_{\rm P}(P) \in \Fp_{n} \) and
$$\comp(P)=\comp(\hat{P})-(k+j+2)=\height(\phi_{\rm P}(\hat{P}))-(k+j+2)=\height(\phi_{\rm P}(P)).$$
\[
\begin{tikzpicture}[scale=0.35]

\draw [-stealth] (0,0) -- (20,0);
\draw [-stealth] (0,0) -- (0,4);

\coordinate (0) at (0,0);
\coordinate (1) at (4,0);
\coordinate (2) at (5,1);
\coordinate (3) at (9,1);
\coordinate (4) at (10,2);
\coordinate (5) at (14,2);
\coordinate (6) at (16,2);
\coordinate (7) at (17,1);
\coordinate (8) at (18,0);

\coordinate (a) at (0,3);
\coordinate (b) at (4,3);
\coordinate (c) at (5,4);
\coordinate (d) at (9,4);
\coordinate (e) at (10,5);
\coordinate (f) at (14,5);

\foreach \i in {0,...,8}{
\filldraw (\i) circle (5pt);
}


\draw[ultra thick] 
	(0) .. controls (a) and (b) .. (1) --
	(2) .. controls (c) and (d) .. (3) --
	(4) .. controls (e) and (f) .. (5) --
	(6) -- (7) -- (8);

\draw[thick] 
	(0) -- (1)
	(2) -- (3)
	(4) -- (5);

\node at (2, 1) {\( X \)};
\node at (7, 2) {\( Y \)};
\node at (12, 3) {\( Z \)};

\node at (4, -1) [right] {\( P = X u Y u Z h d d \)};
\end{tikzpicture}
\qquad
\begin{tikzpicture}[scale=0.35]

\draw [-stealth] (0,0) -- (18,0);
\draw [-stealth] (0,0) -- (0,4);

\coordinate (0) at (0,0);
\coordinate (1) at (4,0);
\coordinate (2) at (6,0);
\coordinate (3) at (10,0);
\coordinate (4) at (12,0);
\coordinate (5) at (16,0);

\coordinate (a) at (0,3);
\coordinate (b) at (4,3);
\coordinate (c) at (6,3);
\coordinate (d) at (10,3);
\coordinate (e) at (12,3);
\coordinate (f) at (16,3);

\foreach \i in {0,...,5}{
\filldraw (\i) circle (5pt);
}


\draw[ultra thick] 
	(0) .. controls (a) and (b) .. (1) --
	(2) .. controls (c) and (d) .. (3) --
	(4) .. controls (e) and (f) .. (5);

\draw[thick] 
	(0) -- (1)
	(2) -- (3)
	(4) -- (5);

\node at (2, 1) {\( X \)};
\node at (8, 1) {\( Y \)};
\node at (14, 1) {\( Z \)};

\node at (4, -1) [right] {\( \hat{P} \coloneqq X h Y h Z \)};
\end{tikzpicture}
\]

\end{enumerate}

Therefore, \( \phi_{\rm P} \) is a well-defined map with the property \( \comp(P)=\height(\phi_{\rm P}(P)) \).
Since the reverse process is clear, 
we can construct the inverse map $\psi_{\rm P}$ of $\phi_{\rm P}$ recursively as follows.
Let $\psi_{\rm P}(\emptyset) \coloneqq \emptyset$. 
Now suppose $n \geq 1$. For $Q \in \mathcal{F}_n$, $Q$ is one of the following five.
\begin{enumerate}[(1)]
\item If \( Q =\hat{Q}(0,1) \in \mathcal{F}_n\{ (0,1) \} \), then 
set \( X=\psi_{\rm P}(\hat{Q}) \) and define
$$\psi_{\rm P}(Q) \coloneqq X h.$$

\item If \( Q =\hat{Q}(1,1) \in \mathcal{F}_n\{ (1,1) \} \), then 
set \( X=\psi_{\rm P}(\hat{Q}) \) and define
$$\psi_{\rm P}(Q) \coloneqq Xud.$$

\item If \( Q =\hat{Q}(k+2,1) \in \mathcal{F}_n\{ (k+2,1) \} \) for 
some
$k \geq 0$, then 
$\comp(\psi_{\rm P}(\hat{Q})) = \height(\hat{Q})=\height(Q)+(k+1) \geq k+1$.
Hence, 
$\psi_{\rm P}(\hat{Q})$ can be expressed as 
$\psi_{\rm P}(\hat{Q}) = X h Y$ 
such that \( X \) and \( Y \) are Schr\"{o}der paths without triple descents
with $\comp(Y) = k$.
Define 
$$\psi_{\rm P}(Q) \coloneqq X u Y h d.$$

\item If \( Q =\hat{Q}(1,-j) \in \mathcal{F}_n\{ (1,-j) \} \) for 
some $j \geq 0$, then 
$\comp(\psi_{\rm P}(\hat{Q})) = \height(\hat{Q})=\height(Q)+(j+1) \geq j+1$.
Again, 
$\psi_{\rm P}(\hat{Q})$ can be expressed as 
$\psi_{\rm P}(\hat{Q}) = X h Z$
such that \( X \) and \( Z \) are Schr\"{o}der paths without triple descents
with $\comp(Z) = j$.
Define 
$$\psi_{\rm P}(Q) \coloneqq X u Z u d d.$$

\item If \( Q =\hat{Q}(k+2, -j) \in \mathcal{F}_n\{ (k+2, -j) \} \) 
for 
some $k \geq 0$ and $j \geq 0$, 
then 
$\comp(\psi_{\rm P}(\hat{Q})) = \height(\hat{Q})=\height(Q)+(k+j+2) \geq k+j+2$.
In this case, 
$\psi_{\rm P}(\hat{Q})$ can be expressed as 
$\psi_{\rm P}(\hat{Q}) = XhYhZ$ 
such that \( X \), \( Y \), and \( Z \) are Schr\"{o}der paths without triple descents
with $\comp(Y) = k$ and $\comp(Z) = j$.
Define 
$$\psi_{\rm P}(Q) \coloneqq  XuYuZ hdd.$$
\end{enumerate} 

Hence, 
$\psi_{\rm P}$ is a well-defined map 
such that $\psi_{\rm P} \circ \phi_{\rm P}$ is the identity on $\bigcup_{n\geq 0}\schp_n$.
So \( \phi_{\rm P} \) and \( \psi_{\rm P} \) are bijective, and this completes the proof. 
\end{proof}

\begin{rem}\label{rem:P-F}
\begin{enumerate}[(1)]
\item
Clearly, $\hdd(\emptyset)=\north(\emptyset).$
It is 
easy to check that
$\hdd(P)=\hdd(\hat{P})+1$ only in the case \eqref{P1} 
of the proof of Theorem~\ref{thm:P-F}
and $\hdd(P)=\hdd(\hat{P})$ in the rest of the cases. 
Therefore, for all $P \in \bigcup_{n\geq 0}\schp_n$,
we obtain $$\hdd(P)=\north(\phi_{\rm P}(P)).$$
\item
Clearly, $\peak(\emptyset)=\aone(\emptyset).$
Since it satisfies $\peak(P)=\peak(\hat{P})+1$ 
only in the cases \eqref{P2} and \eqref{P4} 
of the proof of Theorem~\ref{thm:P-F}
and $\peak(P)=\peak(\hat{P})$ in the rest of the cases, 
for all $P \in \bigcup_{n\geq 0}\schp_n$,
we obtain $$\peak(P)=\aone(\phi_{\rm P}(P)).$$
\end{enumerate}
\end{rem}

\begin{example}
There are six Schr\"{o}der paths in \( \schp_2 \) as shown in Figure~\ref{fig:S2}. 
From the construction of \( \phi_{\rm P} \), for each $i$, we have 
\begin{align*}
\phi_{\rm P}(P^{(i)})&=Q^{(i)},
\end{align*}
with
$\comp(P^{(i)})=\height(Q^{(i)})$,
$\hdd(P^{(i)})=\north(Q^{(i)})$, and
$\peak(P^{(i)})=\aone(Q^{(i)})$,
where each $P^{(i)}\in\schp_2$ and $Q^{(i)}\in\Fp_2$ are given in Figures~\ref{fig:F2} and~\ref{fig:S2}.
\end{example}


\subsection{Restricted bicolored Dyck paths}
\label{sec:B}
A Schr\"{o}der path having no horizontal step is called a Dyck path. 
We color each down step of a Dyck path by black or red and call such a Dyck path a \emph{bicolored Dyck path}. We denote down steps colored black and red by \( \db \) and \( \dr \), respectively. Let us denote by \( \mathcal{B}_{n} \) the set of bicolored Dyck paths \( B \) of semilength \( n \) of the form
\[
B=u^{i_1} {\dr}^{j_1} {\db}^{k_1}  \dots u^{i_{\ell-1}} \dr^{j_{\ell-1}} \db^{k_{\ell-1}} u^{i_{\ell}} \dr^{j_{\ell}},
\]
where 
\[ 
i_1+\cdots+i_{\ell-1}+i_{\ell}=j_1+\cdots+j_{\ell-1}+j_{\ell}+k_1+\cdots+k_{\ell-1}=n
\]
for positive exponents 
\( i_m \) and \( k_m \) and nonnegative exponents \( j_m \) with a positive integer \( \ell \). We call such a bicolored Dyck path $B$ a \emph{restricted bicolored Dyck path} and write \( \last(B)=j_{\ell} \).
Let $\dasc(B)$ be the number of double ascents $uu$ in $B$. 
We define the statistic \( \bval(B) \) to be the number of occurrences of three consecutive steps of the forms \( u\db u\) or \( \dr \db u \) in $B$.
Figure~\ref{fig:B3} shows all the restricted bicolored Dyck paths of semilength \( 3 \). Here, 
\(\bval(B^{(2)})=\bval(B^{(6)})=0 \), \( \bval(B^{(1)})=\bval(B^{(4)})=\bval(B^{(5)})=1 \), and \( \bval(B^{(3)})=2 \).      

\begin{figure}[t]
\centering

\begin{tikzpicture}[scale=0.5]
\foreach \i in {0,1,2,3,4,5,6}
\foreach \j in {0,1,2}
\filldraw[fill=gray!70, color=gray!70] (0.5*\i,0.5*\j) circle (2.5pt);

\draw (0,0) -- (3,0);
\draw (0,0) -- (0,1);

\coordinate (0) at (0,0);
\coordinate (1) at (1/2,1/2);
\coordinate (2) at (2/2,2/2);
\coordinate (3) at (3/2,1/2);
\coordinate (4) at (4/2,0/2);
\coordinate (5) at (5/2,1/2);
\coordinate (6) at (6/2,0/2);

\draw[ultra thick]
(0) -- (1) -- (2) -- (3) -- (4) -- (5) -- (6);
\draw[ultra thick, color=red]
(2)-- (3) (5) -- (6);

\foreach \i in {0,1,2,3,4,5,6}
\filldraw (\i) circle (2.5pt);

\node at (1.5,-0.6) {\( B^{(1)} \)};
\end{tikzpicture}
\qquad 
\begin{tikzpicture}[scale=0.5]
\foreach \i in {0,1,2,3,4,5,6}
\foreach \j in {0,1,2}
\filldraw[fill=gray!70, color=gray!70] (0.5*\i,0.5*\j) circle (2.5pt);

\draw (0,0) -- (3,0);
\draw (0,0) -- (0,1);

\coordinate (0) at (0,0);
\coordinate (1) at (1/2,1/2);
\coordinate (2) at (2/2,2/2);
\coordinate (3) at (3/2,1/2);
\coordinate (4) at (4/2,0/2);
\coordinate (5) at (5/2,1/2);
\coordinate (6) at (6/2,0/2);

\draw[ultra thick]
(0) -- (1) -- (2) -- (3) -- (4) -- (5) -- (6);
\draw[ultra thick, color=red]
(5) -- (6);

\foreach \i in {0,1,2,3,4,5,6}
\filldraw (\i) circle (2.5pt);

\node at (1.5,-0.6) {\( B^{(2)} \)};
\end{tikzpicture}
\qquad
\begin{tikzpicture}[scale=0.5]
\foreach \i in {0,1,2,3,4,5,6}
\foreach \j in {0,1,2}
\filldraw[fill=gray!70, color=gray!70] (0.5*\i,0.5*\j) circle (2.5pt);

\draw (0,0) -- (3,0);
\draw (0,0) -- (0,1);

\coordinate (0) at (0,0);
\coordinate (1) at (1/2,1/2);
\coordinate (2) at (2/2,0/2);
\coordinate (3) at (3/2,1/2);
\coordinate (4) at (4/2,0/2);
\coordinate (5) at (5/2,1/2);
\coordinate (6) at (6/2,0/2);

\draw[ultra thick]
(0) -- (1) -- (2) -- (3) -- (4) -- (5) -- (6);

\draw[ultra thick, color=red]
(5) -- (6);

\foreach \i in {0,1,2,3,4,5,6}
\filldraw (\i) circle (2.5pt);

\node at (1.5,-0.6) {\( B^{(3)} \)};
\end{tikzpicture}
\qquad
\begin{tikzpicture}[scale=0.5]
\foreach \i in {0,1,2,3,4,5,6}
\foreach \j in {0,1,2}
\filldraw[fill=gray!70, color=gray!70] (0.5*\i,0.5*\j) circle (2.5pt);

\draw (0,0) -- (3,0);
\draw (0,0) -- (0,1);

\coordinate (0) at (0,0);
\coordinate (1) at (1/2,1/2);
\coordinate (2) at (2/2,2/2);
\coordinate (3) at (3/2,1/2);
\coordinate (4) at (4/2,2/2);
\coordinate (5) at (5/2,1/2);
\coordinate (6) at (6/2,0/2);

\draw[ultra thick]
(0) -- (1) -- (2) -- (3) -- (4) -- (5) -- (6);
\draw[ultra thick, color=red]
(4)-- (5) -- (6);

\foreach \i in {0,1,2,3,4,5,6}
\filldraw (\i) circle (2.5pt);

\node at (1.5,-0.6) {\( B^{(4)} \)};
\end{tikzpicture}
\qquad
\begin{tikzpicture}[scale=0.5]
\foreach \i in {0,1,2,3,4,5,6}
\foreach \j in {0,1,2}
\filldraw[fill=gray!70, color=gray!70] (0.5*\i,0.5*\j) circle (2.5pt);

\draw (0,0) -- (3,0);
\draw (0,0) -- (0,1);

\coordinate (0) at (0,0);
\coordinate (1) at (1/2,1/2);
\coordinate (2) at (2/2,0/2);
\coordinate (3) at (3/2,1/2);
\coordinate (4) at (4/2,2/2);
\coordinate (5) at (5/2,1/2);
\coordinate (6) at (6/2,0/2);

\draw[ultra thick]
(0) -- (1) -- (2) -- (3) -- (4) -- (5) -- (6);
\draw[ultra thick, color=red]
(4)-- (5) -- (6);

\foreach \i in {0,1,2,3,4,5,6}
\filldraw (\i) circle (2.5pt);

\node at (1.5,-0.6) {\( B^{(5)} \)};
\end{tikzpicture}
\qquad
\begin{tikzpicture}[scale=0.5]
\foreach \i in {0,1,2,3,4,5,6}
\foreach \j in {0,1,2}
\filldraw[fill=gray!70, color=gray!70] (0.5*\i,0.5*\j) circle (2.5pt);

\draw (0,0) -- (3,0);
\draw (0,0) -- (0,1);

\coordinate (0) at (0,0);
\coordinate (1) at (1/2,1/2);
\coordinate (2) at (2/2,2/2);
\coordinate (3) at (3/2,3/2);
\coordinate (4) at (4/2,2/2);
\coordinate (5) at (5/2,1/2);
\coordinate (6) at (6/2,0/2);

\draw[ultra thick]
(0) -- (1) -- (2) -- (3) -- (4) -- (5) -- (6);
\draw[ultra thick, color=red]
(3) -- (4) -- (5) -- (6);

\foreach \i in {0,1,2,3,4,5,6}
\filldraw (\i) circle (2.5pt);

\node at (1.5,-0.6) {\( B^{(6)} \)};
\end{tikzpicture}

\caption{The restricted bicolored Dyck paths of semilength \( 3 \).}\label{fig:B3}
\end{figure}
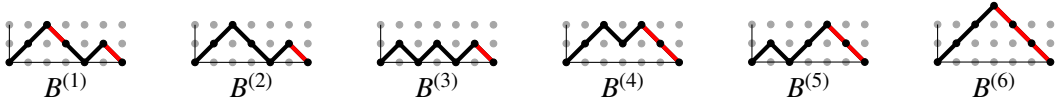 

From restricted bicolored Dyck paths, we construct a bijection to \( F \)-paths. Define
\[ 
\phi_{\rm B} : \bigcup_{n\geq 0}\mathcal{B}_{n+1} \to \bigcup_{n\geq 0} \mathcal{F}_n 
\] 
as follows. Let \( \phi_{\rm B}(u\dr)\coloneqq \emptyset \). Now suppose \( n \geq 1 \). For \( B\in \mathcal{B}_{n+1} \), we decompose \( B \) into \( n+1 \) segments as \( B=B_1 B_2 \dots B_{n+1} \), where \( B_i =u \dr^{1-b_i} \db^{a_i} \). Define \( \phi_{\rm B}(B)\coloneqq (a_1, b_1) (a_2, b_2) \dots (a_n,b_n) \). Note that \(\sum_{j \leq i} (a_j -b_j+1) \) is equal to the number of occurrences of down steps before the \( (i+1) \)st up step. Since \( B \) is a (bicolored) Dyck path, it satisfies that  \(\sum_{j \leq i} (a_j -b_j+1) \leq i \), or equivalently,
\(\sum_{j \leq i} a_j \leq \sum_{j \leq i} b_j \) for each $i=1,2,\dots, n$. Hence, the map is well-defined and bijective.  

\begin{example}
From the construction of \( \phi_{\rm B} \), we have 
\begin{align*}
\phi_{\rm B}(B^{(1)})&=(0,1)(1,0)=Q^{(1)}, &
\phi_{\rm B}(B^{(2)})&=(0,1)(2,1)=Q^{(2)}, &
\phi_{\rm B}(B^{(3)})&=(1,1)(1,1)=Q^{(3)}, \\
\phi_{\rm B}(B^{(4)})&=(0,1)(1,1)=Q^{(4)}, &
\phi_{\rm B}(B^{(5)})&=(1,1)(0,1)=Q^{(5)}, &
\phi_{\rm B}(B^{(6)})&=(0,1)(0,1)=Q^{(6)},
\end{align*}
where 
\( B^{(i)} \in  \mathcal{B}_3 \) and $Q^{(i)}\in\Fp_2$ are given in Figures~\ref{fig:F2} and \ref{fig:B3}. 
We also see that 
\( \height(\phi_{\rm B}(B^{(i)}))=\last(B^{(i)})-1 \),
\( \north(\phi_{\rm B}(B^{(i)}))=\dasc(B^{(i)}) \), and
\( \aone(\phi_{\rm B}(B^{(i)}))=\bval(B^{(i)}) \) 
for each \( i \).
\end{example}

From the construction of the bijection $\phi_{\rm B}$, we have the following results.

\begin{thm}\label{thm:B-F}
Let \( n \) be a nonnegative integer. For each step \( (a,b)\in F \), we have
\[
| \mathcal{B}_{n+1}\{ u \dr^{1-b} \db^{a} u \dr^{m+1}\deli m\ge 0 \}|=|\mathcal{F}_n\{ (a,b) \}|.
\]
Consequently, \( |\mathcal{B}_{n+1}|=|\mathcal{F}_n| \).
\end{thm}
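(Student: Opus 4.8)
The plan is to read the statement off directly from the bijection $\phi_{\rm B}$ just constructed, by tracking how the last step of an $F$-path is encoded in the last two segments of the corresponding restricted bicolored Dyck path. Since $\phi_{\rm B}$ has already been shown to be a bijection carrying $\mathcal{B}_{n+1}$ onto $\mathcal{F}_n$ for every $n$, it will suffice to show that, for each fixed step $(a,b)\in F$, the preimage $\phi_{\rm B}^{-1}\bigl(\mathcal{F}_n\{(a,b)\}\bigr)$ is exactly $\mathcal{B}_{n+1}\{u\dr^{1-b}\db^{a}u\dr^{m+1}\deli m\ge 0\}$.

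So fix $n\ge 1$ and $B\in\mathcal{B}_{n+1}$, and write $B=B_1B_2\cdots B_{n+1}$ with $B_i=u\dr^{1-b_i}\db^{a_i}$, so that $\phi_{\rm B}(B)=(a_1,b_1)(a_2,b_2)\cdots(a_n,b_n)$. The first step is to pin down the shape of the last segment $B_{n+1}$. By the definition of $\mathcal{B}_{n+1}$ the path $B$ ends with a run $\dr^{j_{\ell}}$ of red down steps and no black down step thereafter, so $B_{n+1}=u\dr^{j_{\ell}}$; moreover $j_{\ell}\ge 1$, since if $j_{\ell}=0$ then $B$ would end with the up steps $u^{i_{\ell}}$ and fail to return to the $x$-axis, contradicting that $B$ is a Dyck path. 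Hence for \emph{every} $B\in\mathcal{B}_{n+1}$ we have $B_{n+1}=u\dr^{m+1}$ with $m=j_{\ell}-1\ge 0$, so the condition on $B_{n+1}$ in the left-hand set is automatic and
\[
\mathcal{B}_{n+1}\{u\dr^{1-b}\db^{a}u\dr^{m+1}\deli m\ge 0\}=\{\,B\in\mathcal{B}_{n+1}\deli B_n=u\dr^{1-b}\db^{a}\,\}.
\]
On the other hand, directly from the definition of $\phi_{\rm B}$, the $F$-path $\phi_{\rm B}(B)$ ends with the step $(a,b)$ if and only if $(a_n,b_n)=(a,b)$, that is, if and only if $B_n=u\dr^{1-b}\db^{a}$. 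Combining the two observations, $\phi_{\rm B}$ restricts to a bijection between the set above and $\mathcal{F}_n\{(a,b)\}$, which is the first assertion for $n\ge 1$; for $n=0$ both sides are empty.

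Finally, summing this identity over $(a,b)\in F$ gives $|\mathcal{B}_{n+1}|=|\mathcal{F}_n|$ for $n\ge 1$, since the sets $\mathcal{F}_n\{(a,b)\}$ partition $\mathcal{F}_n$ (each $F$-path of positive length having a unique last step) and, correspondingly, the sets $\mathcal{B}_{n+1}\{u\dr^{1-b}\db^{a}u\dr^{m+1}\deli m\ge 0\}$ partition $\mathcal{B}_{n+1}$; the case $n=0$ is just $|\mathcal{B}_1|=1=|\mathcal{F}_0|$, and in any event the last assertion is immediate from $\phi_{\rm B}$ being a bijection. Almost everything here is bookkeeping already built into the definition of $\phi_{\rm B}$; the one genuinely substantive point, which I expect to be the only real obstacle, is the verification that the last segment of each $B\in\mathcal{B}_{n+1}$ has the form $u\dr^{m+1}$ with $m\ge 0$, equivalently that $j_{\ell}\ge 1$ — this is precisely where one must use that $B$ is an honest Dyck path returning to the $x$-axis rather than merely a restricted bicolored lattice path.
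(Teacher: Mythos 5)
Your proposal is correct and follows essentially the same route as the paper, which simply asserts the theorem ``from the construction of the bijection $\phi_{\rm B}$'' without spelling out the details; you supply exactly the bookkeeping the paper leaves implicit, namely that the suffix condition $u\dr^{1-b}\db^{a}u\dr^{m+1}$ is equivalent to $B_n=u\dr^{1-b}\db^{a}$ because the last segment $B_{n+1}=u\dr^{j_\ell}$ automatically has $j_\ell\ge 1$ (the path must return to the $x$-axis). No gaps.
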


\begin{prop}\label{prop:B-F}
For a nonnegative integer \( n \), let \( B\in \mathcal{B}_{n+1} \). If \( \phi_{\rm B}(B)=Q \), then the following hold.
\begin{enumerate}
\item \( \height(Q)=\last(B)-1 \).
\item $\north(Q)=\dasc(B)$. 
\item $\aone(Q) =\bval(B)$. 
\end{enumerate}
\end{prop}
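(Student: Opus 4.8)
The plan is to read all three statistics directly off the segment decomposition underlying $\phi_{\rm B}$. Write $B = B_1 B_2 \cdots B_{n+1}$ with $B_i = u\dr^{1-b_i}\db^{a_i}$, so that $Q = \phi_{\rm B}(B) = (a_1,b_1)(a_2,b_2)\cdots(a_n,b_n)$; here each $B_i$ begins with the $i$-th up step of $B$, and the final segment $B_{n+1}$ contributes no step of $Q$. Since the black down steps of a restricted bicolored Dyck path occur only inside the blocks $\db^{k_m}$ with $m\le\ell-1$, the segment $B_{n+1}$ contains no black step; hence $a_{n+1}=0$, $B_{n+1}=u\dr^{j_\ell}$, and $\last(B)=j_\ell$ is precisely the number of (red) down steps lying after the last up step of $B$. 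Throughout I will use that $\height(Q) = \sum_{j=1}^{n}(b_j-a_j)$.

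For part (1) I would invoke the identity already recorded in the construction of $\phi_{\rm B}$: for each $i$, the quantity $\sum_{j\le i}(a_j-b_j+1)$ equals the number of down steps of $B$ occurring before its $(i+1)$-st up step. Taking $i=n$, this is the total number of down steps of $B$ minus the number after its last up step, namely $(n+1)-\last(B)$ since $B$ has semilength $n+1$. Hence $n-\height(Q) = \sum_{j=1}^{n}(a_j-b_j+1) = (n+1)-\last(B)$, which rearranges to $\height(Q)=\last(B)-1$.

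Parts (2) and (3) I would handle by a local analysis of the segments. A step of $Q$ equals $(0,1)$ exactly when $B_i=u$ for some $i\le n$, that is, exactly when the $i$-th and $(i+1)$-st up steps of $B$ are adjacent and form a double ascent $uu$; conversely every $uu$ of $B$ consists of two consecutive up steps, necessarily the $i$-th and $(i+1)$-st for a unique $i\le n$ (the last up step opens no such pair, and $B_{n+1}$ carries no step of $Q$), so these occurrences biject with the indices $i\le n$ having $(a_i,b_i)=(0,1)$, giving $\north(Q)=\dasc(B)$. For part (3), $a_i=1$ means $B_i=u\dr^{1-b_i}\db$, whose final step is a single $\db$ preceded within $B_i$ by $u$ (when $b_i=1$) or by $\dr$ (when $b_i\le 0$) and followed by the $u$ that opens $B_{i+1}$; thus each $i\le n$ with $a_i=1$ yields one occurrence of $u\db u$ or $\dr\db u$. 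Conversely, in any occurrence of $u\db u$ or $\dr\db u$ the trailing $u$ opens some segment $B_{i+1}$, so the preceding $\db$ is the last step of $B_i$ (forcing $a_i\ge 1$), and the step before it, which lies in $B_i$, is $u$ or $\dr$ rather than $\db$, forcing $a_i=1$. As $a_{n+1}=0$, restricting attention to $i\le n$ loses nothing, and $\aone(Q)=\bval(B)$ follows.

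The arguments for (1) and (2) amount to unwinding the decomposition; the step that needs the most care is (3), where one must distinguish an occurrence $\db\db u$ (which does not count toward $\bval$ and corresponds to $a_i\ge 2$) from $u\db u$ and $\dr\db u$, and verify that the excluded final segment $B_{n+1}$, having no black step, can never be the site of such a pattern.
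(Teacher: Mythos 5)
Your proof is correct and follows essentially the same route as the paper's: the height identity is obtained from the same down-step count $\sum_{j\le n}(a_j-b_j+1)=(n+1)-\last(B)$, and the other two statistics are read off segment-by-segment, with $(0,1)$ corresponding to a bare $u$ (hence a double ascent) and $(1,b)$ to $u\dr^{1-b}\db$ (hence an occurrence of $u\db u$ or $\dr\db u$). You simply spell out the converse directions and the role of the final segment $B_{n+1}$ more explicitly than the paper does.
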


\begin{proof}
First, note that 
\[
\height(\phi_{\rm B}(B))=\sum_{i\leq n}(b_i-a_i) = n- \sum_{i \leq n} (a_i -b_i+1) =\last(B)-1
\]
since \( \sum_{i \leq n} (a_i -b_i+1) \) counts the number of occurrences of down steps before the last up step.
The remaining is clear because the inverse map \( \phi_{\rm B}^{-1} \) sends the steps \( (0,1) \) and \( (1,b) \) to the segments \( u \)
and  \( u \dr^{1-b} \db \), respectively. The former contributes to \( uu \) and the latter to \( u \db u \) or \( \dr \db u \).   
\end{proof}


\section{(2341,2431,3241)-avoiding permutations}
\label{sec:S}

Let $\mathfrak{S}_n$ be the set of permutations on $[n] \coloneqq \set{1,2,\dots, n}$.
For $\pi \in \mathfrak{S}_n$, we write $\abs{\pi} = n$.
A permutation \( \pi=\pi(1)\pi(2) \dots \pi(n) \in \mathfrak{S}_n \) is said to \emph{contain} a pattern \( \sigma =\sigma(1) \sigma(2) \dots \sigma(k) \in \mathfrak{S}_k\) if there exist some indices \( i_1 < i_2 < \cdots <i_k \) such that \( \pi({i_a})<\pi({i_b}) \) if and only if \( \sigma(a) < \sigma(b) \) for every \( a, b \in [k] \). Otherwise, \( \pi \) is said to \emph{avoid} the pattern \( \sigma \).

For any permutations \( \pi\in \mathfrak{S}_n \) and \( \sigma\in \mathfrak{S}_m \), the \emph{direct sum} of \( \pi \) and \( \sigma \) is the permutation 
$\pi \oplus \sigma$ on \( [n+m] \) defined by 
\[
(\pi \oplus \sigma)(i)\coloneqq \begin{cases}
\pi(i) & \mbox{ if \( 1\leq i \leq n\),}\\
\sigma(i-n)+n & \mbox{ if \( n+1 \leq i \leq n+m \).}
\end{cases}
\]
For a permutation \( \pi \in \mathfrak{S}_n\) with \( n \geq 1 \), if there is no \( j \in [n-1]\) such that \( \{ \pi(i) \deli 1 \leq i \leq j \}=[j] \), then \( \pi \) is said to be \emph{indecomposable}. 
We decompose a permutation as \( \pi=\pi_1\oplus \pi_2 \oplus \cdots \oplus \pi_\ell \), 
where each \( \pi_i \) is an indecomposable permutation, 
and define \( \block(\pi):=\ell \). 
For the sake of simplicity, we write \( \block(\emptyset)=0 \).

\subsection{Shapes of (2341,2431,3241)-avoiding permutations}
A permutation that avoids all the patterns $2341$, $2431$, and $3241$ is called a $(2341, 2431, 3241)$-avoiding permutation. Let  $\mathfrak{S}_n(2341,2431,3241)$ be the set of all $(2341, 2431, 3241)$-avoiding permutations on $[n]$. We refer to $\mathfrak{S}_n(2341,2431,3241)$ as $\perm_n$ for short.
Note that, for any permutations 
\( \pi\in \mathfrak{S}_n \) and \( \sigma\in \mathfrak{S}_m \), \(\pi \oplus \sigma \in \perm_{n+m}\) 
if and only if \( \pi \in \perm_{n} \) and \( \sigma \in \perm_{m} \).

The next lemma allows us to understand the configuration of \( (2341,2431,3241) \)-avoiding permutations.
Figure~\ref{fig:decom} illustrates four shapes of \( (2341,2431,3241) \)-avoiding permutations.
\begin{lem}\label{lem:indecomperm}
For \( \pi \in \perm_{n+1} \) with $n \geq 0$, 
let 
\begin{align*}
&x \coloneqq \pi^{-1}(n+1), 
&z \coloneqq \max\set{ \pi(i)\deli 0\le i\le x-1},\\
&y \coloneqq\pi^{-1}(z),
&w \coloneqq \min\set{ \pi(i)\deli x\le i\le n+1},
\end{align*}
with $\pi(0)=0$.
Then 
$$x-1\leq z \leq n,\quad x-1 \leq w \leq x,$$
and the following hold.

\begin{enumerate}[(1)]

\item 
If $z=n$ and $z<w$,
then $x=n+1$, 
$w=n+1$, and
\begin{align*} 
\set{ \pi(i)\deli 1\le i \le n,~i\neq y }&=\set{1,2, \dots, n-1}.
\end{align*}

\item 
If $z<n$ and $z<w$,
then $1\leq x\leq n$, $z = x-1$, $w=x$, and 
\begin{align*}
\set{ \pi(i) \deli 1 \le i \le x-1, ~i\neq y } &=\set{1,2, \dots, x-2}, \\
\set{ \pi(i) \deli x+1 \leq i \leq n+1 }        &=\set{x,x+1, \dots, n}.
\end{align*}

\item 
If $z=n$ and $z>w$,
then $2\leq x \leq n$, 
$w=x-1$, and
\begin{align*}
\set{ \pi(i) \deli 1 \le i \le x-1, ~i\neq y } &=\set{1,2, \dots, x-2}, \\
\set{ \pi(i) \deli x+1    \leq i \leq n+1 }   &=\set{x-1,x, \dots, n-1}.
\end{align*}

\item 
If $z<n$ and $z>w$,
then $2\leq x \leq n-1$, $x-1 < z < n$, $w=x-1$, and
\begin{align*}
\set{ \pi(i) \deli 1 \le i \le x-1, ~i\neq y } &=\set{1,2, \dots, x-2}, \\
\set{ \pi(i) \deli x+1    \leq i \leq z+1 }   &=\set{x-1,x, \dots, z-1}, \\
\set{ \pi(i) \deli z+2 \leq i \leq n+1 }        &=\set{z+1,z+2, \dots, n}.
\end{align*}

\end{enumerate}
\end{lem}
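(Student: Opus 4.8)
The plan is to reconstruct the shape of $\pi$ from three pieces of data: the position $x$ of the maximal value $n+1$, the largest value $z$ to its left, and the smallest value $w$ weakly to its right. I set $A\coloneqq\set{\pi(1),\dots,\pi(x-1)}$ and $C\coloneqq\set{\pi(x+1),\dots,\pi(n+1)}$, so that $A\sqcup\set{n+1}\sqcup C=[n+1]$ with $|A|=x-1$ and $|C|=n+1-x$. Because $w=\min(C\cup\set{n+1})$ we have $\set{1,\dots,w-1}\subseteq A$, so putting $A'\coloneqq A\setminus\set{1,\dots,w-1}$ gives $|A'|=x-w$ and, since $w\notin A$, $A'\subseteq\set{w+1,\dots,n}$. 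A first useful remark: $A'=\emptyset$ forces $z=\max A=w-1<w$, whereas $A'\neq\emptyset$ forces $z=\max A\ge w+1>w$; as $z$ and $w$ occur at distinct positions they are unequal, so the alternative ``$z<w$'' versus ``$z>w$'' is precisely ``$A'=\emptyset$'' versus ``$A'\neq\emptyset$''.

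Next I would settle the numerical bounds, mostly by counting. One has $z\le n$ since $n+1=\pi(x)$ lies outside positions $0,\dots,x-1$; $z\ge x-1$ since positions $1,\dots,x-1$ carry $x-1$ distinct values all $\le z$; and $w\le x$ since the $n+2-x$ positions $x,\dots,n+1$ cannot be filled using only the $(n+1-x)$-element set $\set{x+1,\dots,n+1}$. The remaining bound $w\ge x-1$ is where pattern avoidance first enters: if $|A'|\ge2$, choose positions $p_1<p_2$ in $\set{1,\dots,x-1}$ with $\pi(p_1),\pi(p_2)\in A'$ (both $>w$) and let $j>x$ be the position of $w$; then $p_1<p_2<x<j$ carry four values in which $n+1$ is largest and $w$ is smallest, giving a $2341$ when $\pi(p_1)<\pi(p_2)$ and a $3241$ when $\pi(p_1)>\pi(p_2)$ --- impossible since $\pi\in\perm_{n+1}$. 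Hence $|A'|\le1$, that is $w\ge x-1$.

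From here the four cases are bookkeeping. If $A'=\emptyset$, then $A=\set{1,\dots,w-1}$, so $x=w$, $z=x-1$, and $C=\set{x,\dots,n}$; splitting on $z=n$ (which forces $x=w=n+1$ and $C=\emptyset$) versus $z<n$ reproduces the conclusions of (1) and (2), the identities involving $\set{\pi(i)\deli 1\le i\le x-1,\ i\neq y}$ being just $A\setminus\set{z}$. If $A'\neq\emptyset$, then $|A'|=1$, and since $z=\max A$ is then the unique element of $A$ exceeding $w$ we get $A=\set{1,\dots,w-1}\cup\set{z}$, hence $w=x-1$, $z\ge x$, and $C=\set{x-1,x,\dots,n}\setminus\set{z}$; if moreover $z=n$ this is exactly case (3).

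The one case that needs more than counting is (4), where $A'\neq\emptyset$ and $z<n$, so $C=\set{x-1,\dots,z-1}\sqcup\set{z+1,\dots,n}$ splits into a ``low'' and a ``high'' block. There I would show that among positions $x+1,\dots,n+1$ every position bearing a value below $z$ precedes every position bearing a value above $z$; granting this, matching the size $z+1-x$ of $\set{x+1,\dots,z+1}$ against the low block and the size $n-z$ of $\set{z+2,\dots,n+1}$ against the high block gives the two displayed set equalities. To prove the ordering, suppose instead that $x<p<q$ with $\pi(p)>z>\pi(q)$; then with $y=\pi^{-1}(z)$, which lies in $\set{1,\dots,x-1}$, the four positions $y<x<p<q$ carry values forming a $2431$ (as $\pi(q)<z<\pi(p)<n+1$) --- forbidden. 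I expect this step, isolating the prohibited ``large-before-small'' inversion to the right of the maximum and recognizing it as a $2431$, to be the only real obstacle; the rest is counting and set-chasing.
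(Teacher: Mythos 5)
Your proposal is correct and follows essentially the same route as the paper: the bounds $x-1\le z\le n$ and $w\le x$ by counting, the bound $w\ge x-1$ (equivalently, at most one entry exceeding $w$ to the left of $x$) by exhibiting a $2341$ or $3241$ using $n+1$ and $w$, and the low-before-high ordering of the values to the right of $x$ in case (4) by exhibiting a $2431$ with $z$ and $n+1$. The only difference is cosmetic bookkeeping (two arbitrary elements of $A'$ versus the paper's split around the position $y$ of $z$).
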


\begin{figure}[t]
\centering
\begin{tikzpicture}[scale=0.32]

\node at (-18+1/2, -18-1/2)        {\tiny $1$\strut};
\node at (-14+1/2, -18-1/2)        {\tiny $y$\strut};
\node at (-1+1/2, -18-1/2)         {\tiny $n+1$\strut};

\node at (-18, -1+1/2)  [left] {\tiny $n+1$\strut};
\node at (-18, -2+1/2)  [left] {\tiny $z$\strut};
\node at (-18, -18+1/2) [left] {\tiny $1$\strut};

\draw 
	(0,-18) -- (0,0) -- (-18,0) -- (-18, -18) -- cycle;
\draw [fill=blue!30]
	(0,-1) -- (0,0) -- (-1,-0) -- (-1,-1) -- cycle;
\draw [fill=red!30]
	(-13, -1) rectangle (-14, -2)
	;
\draw [fill=brown!30] 
	(-1, -2) rectangle (-13, -18) 
	(-14,-2) rectangle (-18,-18);

\draw [fill=black]
	(-1+1/2, -1+1/2) circle (5pt)
	(-14+1/2, -2+1/2) circle (5pt);


\end{tikzpicture}
\qquad 
\begin{tikzpicture}[scale=0.32]

\node at (-18+1/2, -18-1/2)        {\tiny $1$\strut};
\node at (-14+1/2, -18-1/2)        {\tiny $y$\strut};
\node at (-9+1/2, -18-1/2)         {\tiny $x$\strut};
\node at (-1+1/2, -18-1/2)         {\tiny $n+1$\strut};

\node at (-18, -1+1/2)  [left] {\tiny $n+1$\strut};
\node at (-18, -9+1/2) [left] {\tiny $w=x$\strut};
\node at (-18, -10+1/2) [left] {\tiny $z$\strut};
\node at (-18, -18+1/2) [left] {\tiny $1$\strut};
\node at (-20, -9+1/2) [left] {};

\draw 
	(0,-18) -- (0,0) -- (-18,0) -- (-18, -18) -- cycle;
\draw [fill=pink!30] 
	(0,-1) -- (-8,-1) -- (-8,-9) -- (0,-9) -- cycle;
\draw [fill=blue!30]
	(-8,0) -- (-9,0) -- (-9,-1) -- (-8,-1) -- cycle;
\draw [fill=red!30]
	(-13, -9) rectangle (-14, -10)
	;
\draw [fill=brown!30] 
	(-9, -10) rectangle (-13, -18) 
	(-14,-10) rectangle (-18,-18);

\draw [fill=black]
	(-9+1/2, -1+1/2) circle (5pt)
	(-7+1/2, -9+1/2) circle (5pt)
	(-14+1/2, -10+1/2) circle (5pt);


\end{tikzpicture}\\[2ex]

\begin{tikzpicture}[scale=0.32]

\node at (-18+1/2, -18-1/2)        {\tiny $1$\strut};
\node at (-14+1/2, -18-1/2)        {\tiny $y$\strut};
\node at (-9+1/2, -18-1/2)         {\tiny $x$\strut};
\node at (-1+1/2, -18-1/2)         {\tiny $n+1$\strut};

\node at (-18, -1+1/2)  [left] {\tiny $n+1$\strut};
\node at (-18, -2+1/2)  [left] {\tiny $z$\strut};
\node at (-18, -9+1/2) [left] {\tiny $x$\strut};
\node at (-18, -10+1/2) [left] {\tiny $w$\strut};
\node at (-18, -18+1/2) [left] {\tiny $1$\strut};

\draw 
	(0,-18) -- (0,0) -- (-18,0) -- (-18, -18) -- cycle;
\draw [fill=gray!30] 
	(0,-2) -- (-8,-2) -- (-8,-10) -- (0,-10) -- cycle;
\draw [fill=blue!30]
	(-8,0) -- (-9,0) -- (-9,-1) -- (-8,-1) -- cycle;
\draw [fill=red!30]
	(-13, -1) rectangle (-14, -2)
	;
\draw [fill=brown!30] 
	(-9, -10) rectangle (-13, -18) 
	(-14,-10) rectangle (-18,-18);

\draw [fill=black]
	(-9+1/2, -1+1/2) circle (5pt)
	(-7+1/2, -10+1/2) circle (5pt)
	(-14+1/2, -2+1/2) circle (5pt);

\end{tikzpicture}
\qquad 
\begin{tikzpicture}[scale=0.32]

\node at (-18+1/2, -18-1/2)        {\tiny $1$\strut};
\node at (-14+1/2, -18-1/2)        {\tiny $y$\strut};
\node at (-9+1/2, -18-1/2)         {\tiny $x$\strut};
\node at (-4+1/2, -18-1/2) [left]  {\tiny $z+1$\strut};
\node at (-1+1/2, -18-1/2)         {\tiny $n+1$\strut};

\node at (-18, -1+1/2)  [left] {\tiny $n+1$\strut};
\node at (-18, -6+1/2)  [left] {\tiny $z$\strut};
\node at (-18, -9+1/2) [left] {\tiny $x$\strut};
\node at (-18, -10+1/2) [left] {\tiny $w$\strut};
\node at (-18, -18+1/2) [left] {\tiny $1$\strut};
\node at (-20, -9+1/2) [left] {};

\draw 
	(0,-18) -- (0,0) -- (-18,0) -- (-18, -18) -- cycle;
\draw [fill=pink!30] 
	(0,-1) -- (-4,-1) -- (-4,-5) -- (0,-5) -- cycle;
\draw [fill=gray!30] 
	(-4,-6) -- (-8,-6) -- (-8,-10) -- (-4,-10) -- cycle;
\draw [fill=blue!30]
	(-8,0) -- (-9,0) -- (-9,-1) -- (-8,-1) -- cycle;
\draw [fill=red!30]
	(-13, -5) rectangle (-14, -6)
	;
\draw [fill=brown!30] 
	(-9, -10) rectangle (-13, -18) 
	(-14,-10) rectangle (-18,-18);

\draw [fill=black]
	(-9+1/2, -1+1/2) circle (5pt)
	(-7+1/2, -10+1/2) circle (5pt)
	(-14+1/2, -6+1/2) circle (5pt);


\end{tikzpicture}

\caption{Four shapes of $\pi \in \perm_{n+1}$ in Lemma~\ref{lem:indecomperm}.}\label{fig:decom}
\end{figure}

\begin{proof}
If $z<x-1$, then it should be $\set{\pi(i) \deli x+1 \leq i \leq n+1} = \set{x-1,x,\dots, n}$,
which is a contradiction because the sets have different numbers of elements.
Thus, we have $x-1\leq z\leq n$.
Similarly, if $w>x$, then $\set{\pi(i) \deli 1 \leq i \leq x-1} = \set{1,2,\dots, x}$,
which is a contradiction. 
Thus, $w \leq x$.

Now we show that \( x-1\leq w \).
Assuming there exists \( a \) with \( a>w \) satisfying \( 1\leq \pi^{-1}(a)\leq y-1\)
yields a contradiction because \( \pi \) contains \( az (n+1)w \), which forms the pattern \( 2341 \). 
Thus, $\pi(i)<w$ for all $1 \leq i \leq y-1$.
Similarly, assuming there exists \( b \) with \( b>w \) satisfying \( y+1\leq \pi^{-1}(b)\leq x-1\)
yields a contradiction because \( \pi \) contains \( z b(n+1)w \), which forms the pattern \( 3241 \).
Thus, $\pi(i)<w$ for all $y+1 \leq i \leq x-1$.
So we have $x-1\leq w$ and
\begin{align}
\set{ \pi(i) \deli 1 \le i \le x-1, ~i\neq y } &=\set{1,2, \dots, x-2}, \label{eq:sigma}
\end{align}
which is the first part of each case since if \( z=n \) and \( z<w \), 
then \( x=n+1 \). 

Now we consider the remaining parts. 
If there exists $d$ with $d<z$ satisfying $z+2\leq \pi^{-1}(d)\leq n+1$,
then there exists $c$ with $c>z$ satisfying $x+1\leq \pi^{-1}(c)\leq z+1$. 
This yields a contradiction because \( \pi \) contains \( z(n+1)cd \), which forms the pattern \( 2431 \). 
Hence, it follows that 
\begin{align}
\set{ \pi(i) \deli x+1    \leq i \leq z+1 }   &=\set{w,w+1, \dots, z-1}, \label{eq:omega} \\
\set{ \pi(i) \deli z+2 \leq i \leq n+1 }        &=\set{z+1,z+2, \dots, n}. \label{eq:tau}
\end{align}

\begin{enumerate}[(1)]
\item 
If $z=n$ and $z<w$, 
then it is clear that $w=n+1$ and $x=n+1$.
Note that both sides of \eqref{eq:omega} and \eqref{eq:tau} become empty sets.
\item 
If $z<n$ and $z<w$,
then it is clear that $z = x-1$, $w=x$, and $1\le x\le n$. 
Both sides of \eqref{eq:omega} become empty sets, and \eqref{eq:tau} becomes
\begin{align*}
\set{ \pi(i) \deli x+1 \leq i \leq n+1 } &=\set{x,x+1, \dots, n}.
\end{align*}

\item 
If $z=n$ and $z>w$,
then it is clear that $w=x-1$ and $2 \leq x \leq n$.
In this case,
\eqref{eq:omega} becomes
\begin{align*}
\set{ \pi(i) \deli x+1    \leq i \leq n+1 }   &=\set{x-1,x, \dots, n-1},
\end{align*}
and both sides of \eqref{eq:tau} become empty sets.

\item 
If $z<n$ and $z>w$,
then it is clear that $w=x-1$ and $1 \leq x-1 < z < n$.
So
\eqref{eq:omega} and \eqref{eq:tau}
are what we want.
\end{enumerate}
This completes the proof. In addition,
for each of cases (1) to (4), both sides of \eqref{eq:sigma} become empty sets if $x=1$ or $x=2$.
\end{proof}

For $n \geq 1$ and $ \pi=\pi_1\oplus \pi_2 \oplus \cdots \oplus \pi_\ell \in \perm_{n+1}$ with $\block(\pi)=\ell$, 
letting $x$, $y$, $z$, and $w$ defined in Lemma~\ref{lem:indecomperm},
we are able to define a permutation $\hat{\pi} \in \perm_{n}$ from $\pi$ as follows.


\begin{enumerate}[(1)]
\item 
If $z=n$ and $z<w$, 
then 
define $\hat{\pi}\in\perm_n$ by
\begin{align*}
\hat{\pi}(i) &:= \pi(i)
\end{align*}
for  $1\leq i \leq n$.
Since $\pi = \hat{\pi} \oplus 1$, we have
\begin{align} \label{eq:blk_hatpi1}
\block(\pi) = \block(\hat{\pi})+1.
\end{align}

\item 
If $z<n$ and $z<w$,
then 
define $\hat{\pi}\in\perm_n$ by
\[
\hat{\pi}(i)\coloneqq 
\begin{cases}
\pi(i)      & \text{ if $1\leq i \leq x-1$,}\\
\pi(i+1)    & \text{ if $x\leq i \leq n$,}
\end{cases}
\]
as shown in Figure~\ref{fig:pihat2}.
Here,  $\hat{\pi}$ can be denoted as 
\( \hat{\pi}=\sigma \oplus \tau \), 
where 
\( \sigma \in \perm_{x-1} \) and \( \omega \in \perm_{n-x+1} \). 
Since $\block(\pi) = \block(\sigma) + 1 $ 
 and $\block(\hat{\pi})=\block(\sigma)+\block(\tau)$, 
 we have
\begin{align} \label{eq:blk_hatpi2}
\block(\pi) = \block(\hat{\pi}) - \block(\tau) + 1.
\end{align}

\begin{figure}[t]
\begin{center}
\begin{tikzpicture}[scale=0.279]

\node at (-14+1/2, 1/2)        {\tiny $y$\strut};
\node at (-9+1/2, 1/2)         {\tiny $x$\strut};
\node at (-1+1/2, 1/2)         {\tiny $n+1$\strut};

\node at (0, -1+1/2) [right] {\tiny $n+1$\strut};
\node at (0, -2+1/2) [right] {\tiny $n$\strut};
\node at (0, -9+1/2) [right] {\tiny $w(=x)$\strut};
\node at (0, -10+1/2) [right] {\tiny $z(=x-1)$\strut};

\draw 
	(0,-20) -- (0,0) -- (-20,0);
\draw [fill=pink!30] 
	(0,-1) rectangle (-8,-9);
\draw [fill=blue!30]
	(-8,0) rectangle (-9,-1);
\draw [fill=red!30]
	(-13, -9) rectangle (-14, -10);
\path [fill=brown!30] 
	(-9, -10) rectangle (-13, -20) 
	(-14,-10) rectangle (-20,-20);
\draw 
	(-9, -20) -- (-9, -10) -- (-13, -10) -- (-13, -20)
	(-14, -20) -- (-14, -10) -- (-20, -10)
	(-9,-9) -- (-9, -20)
	;
\draw [ultra thick]
	(-20,-16) -- (-16,-16) -- (-16,-20)
	(0,0) rectangle (-9,-9) -- (0,-9)
	(-9,-9) rectangle (-16,-16);
\draw [fill=black]
	(-9+1/2, -1+1/2) circle (5pt)
	(-7+1/2, -9+1/2) circle (5pt)
	(-14+1/2, -10+1/2) circle (5pt);

\node at (0, -20) [above left] {\small $\pi \in \perm_{n+1}$};
\node at (0-1/3, -9+1/3) [above left] {\small $\pi_\ell$};


\end{tikzpicture}
\qquad
\begin{tikzpicture}[scale=0.279]

\node at (-13+1/2, 1/2)       {\tiny $y$\strut};
\node at (-8+1/2, 1/2)        {\tiny $x$\strut};
\node at (-1+1/2, 1/2)        {\tiny $n$\strut};

\node at (0, -1+1/2) [right] {\tiny $n$\strut};
\node at (0, -8+1/2) [right] {\tiny $x$\strut};
\node at (0, -9+1/2) [right] {\tiny $x-1$\strut};

\draw 
	(0,-19) -- (0,0) -- (-19,0);
\draw [fill=pink!30] 
	(0,0) rectangle (-8,-8);
\draw [ultra thick] 
	(0,0) -- (-2,0) -- (-2,-2) -- (0,-2) -- cycle
	(-2,-2) -- (-3,-2) -- (-3,-3) -- (-2,-3) -- cycle
	(-5,-5) -- (-8,-5) -- (-8,-8) -- (-5,-8) -- cycle;
\foreach \i in {1,...,5}{
\draw [fill=black]
	(-3-\i/3, -3-\i/3) circle (2pt);
}
\draw [fill=red!30]
	(-12, -8) rectangle (-13, -9);
\path [fill=brown!30] 
	(-8, -9) rectangle (-12, -19) 
	(-13,-9) rectangle (-19,-19);
\draw 
	(-8,-8) -- (-8, -19)
	(-8,-8) -- (-19, -8)
	(-8,-9) -- (-19, -9)
	(-12,-8) -- (-12, -19)
	(-13,-8) -- (-13, -19);
	
\draw [ultra thick]
	(-19,-15) -- (-15,-15) -- (-15,-19)
	(-8,-8) rectangle (-15,-15);

\draw [fill=black]
	(-7+1/2, -8+1/2) circle (5pt)
	(-13+1/2, -9+1/2) circle (5pt);

\node at (0, -19) [above left] {\small $\hat{\pi} \in \perm_{n}$};
\node at (0-1/3, -8+1/3) [above left] {\small $\tau$};
\node at (-8-1/3, -19+1/3) [above left] {\small $\sigma$};

\end{tikzpicture}
\end{center}
\caption{A shape of \( \hat{\pi} \) for \( \pi \) with \( z<n \) and \( z<w \). Here, a thick line border box represents a block.}\label{fig:pihat2}
\end{figure}

\item 
If $z=n$ and $z>w$, then
define $\hat{\pi}\in\perm_n$ by
\[
\hat{\pi}(i) \coloneqq 
\begin{cases}
\pi(i)       & \text{ if $ 1\leq i \leq x-1$ and $i\neq y$,}\\
x-1          & \text{ if $ i=y $,}\\
\pi(i+1)+1   & \text{ if $ x\leq i \leq n $},
\end{cases}
\]
as shown in Figure~\ref{fig:pihat3}.
Here,  $\hat{\pi}$ can be denoted as 
\( \hat{\pi}=\sigma \oplus \omega \), 
where 
\( \sigma \in \perm_{x-1} \) and \( \omega \in \perm_{n-x+1} \).
Since $\block(\pi) = \block(\sigma)$
 and $\block(\hat{\pi})=\block(\sigma)+\block(\omega)$,
 we have
\begin{align} \label{eq:blk_hatpi3}
\block(\pi) = \block(\hat{\pi}) - \block(\omega).
\end{align}

\begin{figure}[t]
\begin{center}
\begin{tikzpicture}[scale=0.279]

\node at (-14+1/2, 1/2)        {\tiny $y$\strut};
\node at (-9+1/2, 1/2)         {\tiny $x$\strut};
\node at (-1+1/2, 1/2)         {\tiny $n+1$\strut};

\node at (0, -1+1/2)  [right] {\tiny $n+1$\strut};
\node at (0, -2+1/2)  [right] {\tiny $z(=n)$\strut};
\node at (0, -9+1/2) [right] {\tiny $x$\strut};
\node at (0, -10+1/2) [right] {\tiny $w(=x-1)$\strut};

\draw 
	(0,-20) -- (0,0) -- (-20,0);
\draw [fill=gray!30] 
	(0,-2) -- (-8,-2) -- (-8,-10) -- (0,-10) -- cycle;
\draw [fill=blue!30]
	(-8,0) -- (-9,0) -- (-9,-1) -- (-8,-1) -- cycle;
\draw [fill=red!30]
	(-13, -1) rectangle (-14, -2)
	;
\path [fill=brown!30] 
	(-9, -10) rectangle (-13, -20) 
	(-14,-10) rectangle (-20,-20);
\draw 
	(-9, -20) -- (-9, -10) -- (-13, -10) -- (-13, -20)
	(-14, -20) -- (-14, -10) -- (-20, -10)
	;	
\draw [ultra thick]
	(-20,-16) -- (-16,-16) -- (-16,-20)
	(0,0) -- (-16,0) -- (-16,-16) -- (0,-16) -- cycle;

\draw [fill=black]
	(-9+1/2, -1+1/2) circle (5pt)
	(-7+1/2, -10+1/2) circle (5pt)
	(-14+1/2, -2+1/2) circle (5pt);

\node at (0, -20) [above left] {\small $\pi \in \perm_{n+1}$};
\node at (0-1/3, -16+1/3) [above left] {\small $\pi_\ell$};


\end{tikzpicture}
\qquad
\begin{tikzpicture}[scale=0.279]

\node at (-13+1/2, 1/2)        {\tiny $y$\strut};
\node at (-8+1/2, 1/2)         {\tiny $x$\strut};
\node at (-1+1/2, 1/2)         {\tiny $n$\strut};

\node at (0, -1+1/2) [right] {\tiny $n$\strut};
\node at (0, -8+1/2) [right] {\tiny $x$\strut};
\node at (0, -9+1/2) [right] {\tiny $x-1$\strut};

\draw 
	(0,-19) -- (0,0) -- (-19,0);
\draw [fill=gray!30] 
	(0,0) -- (-8,0) -- (-8,-8) -- (0,-8) -- cycle;
\draw [ultra thick] 
	(0,0) -- (-2,0) -- (-2,-2) -- (0,-2) -- cycle
	(-2,-2) -- (-3,-2) -- (-3,-3) -- (-2,-3) -- cycle
	(-5,-5) -- (-8,-5) -- (-8,-8) -- (-5,-8) -- cycle;
\foreach \i in {1,...,5}{
\draw [fill=black]
	(-3-\i/3, -3-\i/3) circle (2pt);
}
\draw [fill=red!30]
	(-12, -8) rectangle (-13, -9);
\path [fill=brown!30] 
	(-8, -9) rectangle (-12, -19) 
	(-13,-9) rectangle (-19,-19);
\draw 
	(-8,-8) -- (-8, -19)
	(-8,-8) -- (-19, -8)
	(-8,-9) -- (-19, -9)
	(-12,-8) -- (-12, -19)
	(-13,-8) -- (-13, -19);
	
\draw [ultra thick]
	(-19,-15) -- (-15,-15) -- (-15,-19)
	(-8,-8) -- (-15,-8) -- (-15,-15) -- (-8,-15) -- cycle;

\draw [fill=black]
	(-7+1/2, -8+1/2) circle (5pt)
	(-13+1/2, -9+1/2) circle (5pt);

\node at (0, -19) [above left] {\small $\hat{\pi} \in \perm_{n}$};
\node at (0-1/3, -8+1/3) [above left] {\small $\omega$};
\node at (-8-1/3, -19+1/3) [above left] {\small $\sigma$};

\end{tikzpicture}
\end{center}
\caption{A shape of \( \hat{\pi} \) for \( \pi \) with \( z=n \) and \( z>w \). Here, a thick line border box represents a block.}\label{fig:pihat3}
\end{figure}

\item 
If $z<n$ and $z>w$, 
then 
define $\hat{\pi}\in\perm_n$ by
\[
\hat{\pi}(i)\coloneqq \begin{cases}
\pi(i)       & \text{ if $ 1\leq i \leq x-1$ and $i\neq y$,}\\
x-1          & \text{ if \( i=y \),}\\
\pi(i+1)+1   & \text{ if \( x\leq i \leq z \),}\\
\pi(i+1)     & \text{ if $ z+1 \leq i \leq n $},
\end{cases}
\]
as shown in Figure~\ref{fig:pihat4}.
Here,  $\hat{\pi}$ can be denoted as 
\( \hat{\pi}=\sigma \oplus \omega \oplus \tau  \), 
where 
\( \sigma \in \perm_{x-1} \), \( \omega \in \perm_{z-x+1} \), and \( \tau \in \perm_{n-z} \).
Since $\block(\pi) = \block(\sigma)$  
 and $\block(\hat{\pi})=\block(\sigma)+\block(\omega)+\block(\tau)$, 
we have
\begin{align} \label{eq:blk_hatpi4}
\block(\pi) = \block(\hat{\pi}) - \block(\omega) - \block(\tau).
\end{align}

\begin{figure}[t]
\begin{center}
\begin{tikzpicture}[scale=0.279]

\node at (-14+1/2, 1/2)        {\tiny $y$\strut};
\node at (-9+1/2, 1/2)         {\tiny $x$\strut};
\node at (-5+1/2, 1/2)         {\tiny $z+1$\strut};
\node at (-1+1/2, 1/2)         {\tiny $n+1$\strut};

\node at (0, -1+1/2)  [right] {\tiny $n+1$\strut};
\node at (0, -2+1/2)  [right] {\tiny $n$\strut};
\node at (0, -6+1/2)  [right] {\tiny $z$\strut};
\node at (0, -9+1/2) [right] {\tiny $x$\strut};
\node at (0, -10+1/2) [right] {\tiny $w(=x-1)$\strut};

\draw 
	(0,-20) -- (0,0) -- (-20,0);
\draw [fill=pink!30] 
	(0,-1) -- (-4,-1) -- (-4,-5) -- (0,-5) -- cycle;
\draw [fill=gray!30] 
	(-4,-6) -- (-8,-6) -- (-8,-10) -- (-4,-10) -- cycle;
\draw [fill=blue!30]
	(-8,0) -- (-9,0) -- (-9,-1) -- (-8,-1) -- cycle;
\draw [fill=red!30]
	(-13, -5) rectangle (-14, -6)
	;
\path [fill=brown!30] 
	(-9, -10) rectangle (-13, -20) 
	(-14,-10) rectangle (-20,-20);
\draw 
	(-9, -20) -- (-9, -10) -- (-13, -10) -- (-13, -20)
	(-14, -20) -- (-14, -10) -- (-20, -10)
	;	

\draw [ultra thick]
	(-20,-16) -- (-16,-16) -- (-16,-20)
	(0,0) -- (-16,0) -- (-16,-16) -- (0,-16) -- cycle;

\draw [fill=black]
	(-9+1/2, -1+1/2) circle (5pt)
	(-7+1/2, -10+1/2) circle (5pt)
	(-14+1/2, -6+1/2) circle (5pt);

\node at (0, -20) [above left] {\small $\pi \in \perm_{n+1}$};
\node at (0-1/3, -16+1/3) [above left] {\small $\pi_\ell$};


\end{tikzpicture}
\qquad
\begin{tikzpicture}[scale=0.279]

\node at (-13+1/2, 1/2)        {\tiny $y$\strut};
\node at (-7-1/2, 1/2)         {\tiny $x$\strut};
\node at (-5+1/2, 1/2)         {\tiny $z$\strut};
\node at (-1+1/2, 1/2)         {\tiny $n$\strut};

\node at (0, -1+1/2) [right] {\tiny $n$\strut};
\node at (0, -5+1/2) [right] {\tiny $z$\strut};
\node at (0, -8+1/2) [right] {\tiny $x$\strut};
\node at (0, -9+1/2) [right] {\tiny $x-1$\strut};

\draw 
	(0,-19) -- (0,0) -- (-19,0);
\draw [fill=pink!30] 
	(0,0) -- (-4,0) -- (-4,-4) -- (0,-4) -- cycle;
\draw [fill=gray!30] 
	(-4,-4) -- (-8,-4) -- (-8,-8) -- (-4,-8) -- cycle;
\draw [fill=red!30]
	(-12, -8) rectangle (-13, -9);
\path [fill=brown!30] 
	(-8, -9) rectangle (-12, -19) 
	(-13,-9) rectangle (-19,-19);
\draw 
	(-8,-8) -- (-8, -19)
	(-8,-8) -- (-19, -8)
	(-8,-9) -- (-19, -9)
	(-12,-8) -- (-12, -19)
	(-13,-8) -- (-13, -19);
	
\draw [ultra thick] 
	(0,0) rectangle (-1-1/2,-1-1/2)
	(-1-1/2,-1-1/2) rectangle (-2,-2)
	(-3+1/4,-3+1/4) rectangle (-4,-4)
	(-4,-4) rectangle (-4-1,-4-1)
	(-6+1/4,-6+1/4) rectangle (-8,-8)
;
\foreach \i in {2,...,4}{
\draw [fill=black]
	(-1-1/2-\i/10*3, -1-1/2-\i/10*3) circle (2pt);
	}
\foreach \i in {2,...,4}{
\draw [fill=black]
	(-4-1/2-\i/10*3, -4-1/2-\i/10*3) circle (2pt);
}
\draw [ultra thick]
	(-19,-15) -- (-15,-15) -- (-15,-19)
	(-8,-8) -- (-15,-8) -- (-15,-15) -- (-8,-15) -- cycle;

\draw [fill=black]
	(-7+1/2, -8+1/2) circle (5pt)
	(-13+1/2, -9+1/2) circle (5pt);

\node at (0, -19) [above left] {\small $\hat{\pi} \in \perm_{n}$};
\node at (0, -4) [above left] {\small $\tau$};
\node at (-4+1/4, -8) [above left] {\small $\omega$};
\node at (-8-1/3, -19+1/3) [above left] {\small $\sigma$};

\end{tikzpicture}
\end{center}
\caption{A shape of \( \hat{\pi} \) for \( \pi \) with \( z<n \) and \( z>w \). Here, a thick line border box represents a block.}\label{fig:pihat4}
\end{figure}
\end{enumerate}

We give an example for $\hat{\pi}$.
\begin{example}
Consider the permutations \( \pi \in \perm_{9} \).
\begin{enumerate}
\item 
If \( \pi= 321547689 \), then $(x,y,z,w) = (9,8,8,9)$ and $\hat{\pi}= 32154768$.

\item 
If \( \pi= 321549768 \), then $(x,y,z,w) = (6,4,5,6)$ 
and $\hat{\pi}= 32154768 = \sigma \oplus \tau$,
where \( \sigma= 32154 \in  \perm_{5} \) and \( \tau= 213 \in \perm_{3} \).

\item 
If \( \pi= 321849657 \), then $(x,y,z,w) = (6,4,8,5)$
and $\hat{\pi}= 32154768 =\sigma \oplus \omega$, 
where \( \sigma= 32154 \in  \perm_{5} \) and \( \omega= 213 \in \perm_{3} \).  

\item 
If \( \pi= 321749658 \), then $(x,y,z,w) = (6,4,7,5)$
and \( \hat{\pi}= 32154768 = \sigma \oplus \omega \oplus \tau \), 
where \( \sigma=32154 \in  \perm_{5} \), \( \omega= 21 \in \perm_{2} \), and \( \tau= 1 \in \perm_{1} \). 
\end{enumerate}
\end{example}


\subsection{Bijection between (2341,2431,3241)-avoiding permutations and {\it F}-paths}
Let us define a map 
\[
\phi_{\rm S} : \bigcup_{n\geq 0}\perm_{n+1} \to \bigcup_{n\geq 0}\mathcal{F}_n
\]
such that
$$\block(\pi)=\height(\phi_{\rm S}(\pi)) + 1$$
recursively. 
Let \( \phi_{\rm S}(1)\coloneqq \emptyset \)
with $\block(1)=\height(\emptyset) + 1$. 
Now suppose \( n \geq 1 \). 

\begin{enumerate}[(1)]
\item 
For $\pi \in \perm_{n+1}$ with $z=n$ and $z<w$,
let $$\phi_{\rm S}(\pi)\coloneqq \phi_{\rm S}(\hat{\pi})(0,1).$$ 
Due to $\block(\hat{\pi})=\height(\phi_{\rm S}(\hat{\pi})) + 1$ and~\eqref{eq:blk_hatpi1}, we have
\begin{align*}
\block(\pi) 
&= \block(\hat{\pi})+1 \\ 
&= \height(\phi_{\rm S}(\hat{\pi})) + 2 = \height(\phi_{\rm S}(\pi)) + 1.
\end{align*}

\item
For $\pi \in \perm_{n+1}$ with $z<n$ and $z<w$,
let $$\phi_{\rm S}(\pi)\coloneqq \phi_{\rm S}(\hat{\pi})(1,2-\block(\tau)).$$ 
Due to $\block(\hat{\pi})=\height(\phi_{\rm S}(\hat{\pi})) + 1$ and~\eqref{eq:blk_hatpi2}, we have
\begin{align*}
\block(\pi) 
&= \block(\hat{\pi}) - \block(\tau) + 1\\
&= \height(\phi_{\rm S}(\hat{\pi})) - \block(\tau) + 2= \height(\phi_{\rm S}(\pi)) + 1.
\end{align*}

\item
For $\pi \in \perm_{n+1}$ with $z=n$ and $z>w$,
let $$\phi_{\rm S}(\pi)\coloneqq \phi_{\rm S}(\hat{\pi})(1+\block(\omega),1).$$ 
Due to $\block(\hat{\pi})=\height(\phi_{\rm S}(\hat{\pi})) + 1$ and~\eqref{eq:blk_hatpi3}, we have
\begin{align*}
\block(\pi) 
&= \block(\hat{\pi}) - \block(\omega)\\
&= \height(\phi_{\rm S}(\hat{\pi})) - \block(\omega) + 1 = \height(\phi_{\rm S}(\pi)) + 1.
\end{align*}

\item
For $\pi \in \perm_{n+1}$ with $z<n$ and $z>w$,
let $$\phi_{\rm S}(\pi)\coloneqq \phi_{\rm S}(\hat{\pi})(1+\block(\omega), 1-\block(\tau)).$$
Due to $\block(\hat{\pi})=\height(\phi_{\rm S}(\hat{\pi})) + 1$ and~\eqref{eq:blk_hatpi4}, we have
\begin{align*}
\block(\pi) &= \block(\hat{\pi})- \block(\omega) - \block(\tau) \\ 
&= \height(\phi_{\rm S}(\hat{\pi})) - \block(\omega) - \block(\tau) + 1 = \height(\phi_{\rm S}(\pi)) + 1.
\end{align*}
\end{enumerate}
Let $\phi_{\rm S}(\pi) = (a_1, b_1) \dots (a_n, b_n)$.
Because $b_n-a_n=\block(\pi)- \block(\hat{\pi})$ in any case,
we obtain $$\height(\phi_{\rm S}(\pi))=\block(\pi)-1 \geq 0$$
for all $\pi \in \perm_{n+1}$, 
so it is easy to check that  
\( \sum_{j \leq n} (b_j-a_j) \geq 0 \) for showing \( \phi_{\rm S} \) is well-defined. 
By proving the bijectivity of \( \phi_{\rm S} \), we obtain the following theorem.
\begin{thm}\label{thm:S-F}
For a nonnegative integer \( n \), 
the following hold.
\begin{enumerate} 
\item \( | \{ \pi \in \perm_{n+1} \deli \text{$z=n$, $z<w$} \}|
=|\mathcal{F}_n\{ (0,1) \}|\).
\item \( | \{ \pi \in \perm_{n+1} \deli \text{$z<n$, $z<w$} \}|
=|\mathcal{F}_n\{ (1,b) \deli b \leq 1 \}|\).
\item \( | \{ \pi \in \perm_{n+1} \deli \text{$z=n$, $z>w$} \}|
=|\mathcal{F}_n\{ (a,1) \deli a\geq 2 \}|\).
\item \( | \{ \pi \in \perm_{n+1} \deli \text{$z<n$, $z>w$} \}|
=|\mathcal{F}_n\{ (a,b) \deli a\geq 2, ~b\leq 0 \}|\).
\end{enumerate}
where $z$ and $w$ are defined in Lemma~\ref{lem:indecomperm}.
Consequently, \( |\perm_{n+1}|=|\mathcal{F}_n| \). 
\end{thm}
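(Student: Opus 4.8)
The plan is to prove Theorem~\ref{thm:S-F} by showing that the recursively defined map $\phi_{\rm S}$ is a bijection which respects last steps; items~(1)--(4) and the consequence $|\perm_{n+1}|=|\mathcal{F}_n|$ then follow at once. First observe the relevant bookkeeping: for $n\ge 1$ the four cases of Lemma~\ref{lem:indecomperm} --- ``$z=n$, $z<w$'', ``$z<n$, $z<w$'', ``$z=n$, $z>w$'', ``$z<n$, $z>w$'' --- are mutually exclusive and exhaustive (one checks $z\neq w$, while $z\le n$ always), so they partition $\perm_{n+1}$. Meanwhile the last steps prescribed in those four cases, namely $(0,1)$, $(1,2-\block(\tau))$, $(1+\block(\omega),1)$, and $(1+\block(\omega),1-\block(\tau))$, range over $\{(0,1)\}$, $\{(1,b):b\le 1\}$, $\{(a,1):a\ge 2\}$, and $\{(a,b):a\ge 2,\ b\le 0\}$ respectively as $\block(\omega)$ and $\block(\tau)$ run over the positive integers, and these four sets partition $F$. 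Hence a last-step-preserving bijection $\phi_{\rm S}$ gives items~(1)--(4) directly, and summing over the last step gives the final equality.

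For well-definedness, recall from the construction that $\hat\pi\in\perm_n$ in each of the four cases, so the recursion bottoms out at $\phi_{\rm S}(1)=\emptyset$. Writing $\phi_{\rm S}(\pi)=(a_1,b_1)\cdots(a_n,b_n)$, one has $b_j-a_j=\block(\pi)-\block(\hat\pi)$ in every case; hence each prefix of $\phi_{\rm S}(\pi)$ equals $\phi_{\rm S}(\pi')$ for some $\pi'\in\perm$ and so has height $\block(\pi')-1\ge 0$. This is exactly the inequality $x_i\le y_i$ defining an $F$-path, so $\phi_{\rm S}(\pi)\in\mathcal{F}_n$.

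Next I would construct the inverse $\psi_{\rm S}\colon\bigcup_n\mathcal{F}_n\to\bigcup_n\perm_{n+1}$ recursively, with $\psi_{\rm S}(\emptyset)=1$: for $Q=\hat Q\,(a,b)\in\mathcal{F}_n$ with $n\ge 1$, set $\hat\pi=\psi_{\rm S}(\hat Q)$ and recover $\pi$ by inverting the appropriate one of the four constructions of $\hat\pi$ from $\pi$ given before the definition of $\phi_{\rm S}$: if $(a,b)=(0,1)$ put $\pi=\hat\pi\oplus 1$; if $a=1$, split $\hat\pi=\sigma\oplus\tau$ with $\block(\tau)=2-b$ and reinsert the new maximum after $\sigma$ as in Lemma~\ref{lem:indecomperm}(2); if $a\ge 2$ and $b=1$, split $\hat\pi=\sigma\oplus\omega$ with $\block(\omega)=a-1$ and reinsert as in Lemma~\ref{lem:indecomperm}(3); if $a\ge 2$ and $b\le 0$, split $\hat\pi=\sigma\oplus\omega\oplus\tau$ with $\block(\omega)=a-1$ and $\block(\tau)=1-b$ and reinsert as in Lemma~\ref{lem:indecomperm}(4). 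Each split is legal: by the identities~\eqref{eq:blk_hatpi1}--\eqref{eq:blk_hatpi4}, subtracting the required number of blocks ($0$, $2-b$, $a-1$, $(a-1)+(1-b)$ respectively) from $\block(\hat\pi)=\height(\hat Q)+1$ leaves $\block(\pi)-1=\height(Q)\ge 0$ in the first two cases and $\block(\pi)=\height(Q)+1\ge 1$ in the last two, all nonnegative precisely because $Q$ is an $F$-path. One then verifies $\psi_{\rm S}\circ\phi_{\rm S}=\id$ and $\phi_{\rm S}\circ\psi_{\rm S}=\id$ by induction on $n$, using the identities~\eqref{eq:blk_hatpi1}--\eqref{eq:blk_hatpi4} to see that the step read off by $\phi_{\rm S}$ and the blocks peeled off by $\psi_{\rm S}$ agree.

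I expect the main obstacle to be checking that the reinsertion used in $\psi_{\rm S}$ actually yields a member of $\perm_{n+1}$ lying in the intended case of Lemma~\ref{lem:indecomperm}: one must verify that inserting the new maximum together with its accompanying blocks ($\omega$, and $\tau$ where present) among $\sigma$, $\omega$, $\tau$ creates no occurrence of $2341$, $2431$, or $3241$, and that the resulting parameters $x,y,z,w$ land exactly where Figure~\ref{fig:decom} predicts, so that $\phi_{\rm S}$ then inverts the reconstruction. This amounts to rereading the proof of Lemma~\ref{lem:indecomperm} in reverse, and the delicate points will be the degenerate situations $x\in\{1,2\}$ and an empty $\omega$ or $\tau$, where several of the index sets in the lemma collapse to the empty set.
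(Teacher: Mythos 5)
Your proposal takes essentially the same route as the paper: well-definedness of $\phi_{\rm S}$ via the identity $\height(\phi_{\rm S}(\pi))=\block(\pi)-1\ge 0$ applied to prefixes, followed by the same four-case recursive construction of the inverse $\psi_{\rm S}$ with exactly the same block counts for the splits ($\block(\tau)=2-b$, $\block(\omega)=a-1$, etc.) and the same legality check $\block(\hat\pi)\ge(a-b)+1$. The one verification you flag as remaining work --- that the reinsertion produces a $(2341,2431,3241)$-avoiding permutation landing in the intended case of Lemma~\ref{lem:indecomperm} --- is likewise left implicit in the paper, which simply records the resulting values of $z$ and $w$ in each case.
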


\begin{proof}
It suffices that 
we can construct the inverse map $\psi_{\rm S}$ of $\phi_{\rm S}$ recursively as follows.
Let $\psi_{\rm S}(\emptyset) \coloneqq 1$. 
Now suppose $n \geq 1$. 
For $Q = \hat{Q} (a_n, b_n) \in \mathcal{F}_n$, 
set $\hat{\pi} = \psi_{\rm S}(\hat{Q}) \in \perm_{n}$
with
$$\block(\hat{\pi}) = \height(\hat{Q}) + 1.$$
Due to $\height(\hat{Q}) + (b_n-a_n) = \height(Q) \geq 0$,
we know
$$\block(\hat{\pi}) = \height(\hat{Q}) + 1 \geq (a_n-b_n) +1.$$ 
The last step \( (a_n, b_n) \) of \( Q \) is one of the following four.
\begin{enumerate}[(1)]
\item 
For \( (a_n, b_n)=(0,1) \),
define $ \psi_{\rm S}(Q) \coloneqq \pi \in \perm_{n+1}$ 
by 
$\pi = \hat{\pi} \oplus 1$. 
Thus, we have $z=n$ and $w=n+1$ for $\pi$.

\item 
For \( a_n=1 \) and \( b_n = 1-j \leq 1 \) with $j \geq 0$,
due to 
$$\block(\hat{\pi}) \geq (a_n - b_n) +1 = j+1,$$
$\hat{\pi}$ can be denoted as 
\( \hat{\pi}=\sigma \oplus \tau \in  \perm_{n} \) 
satisfying 
$\block(\tau) = j+1$ 
with $\abs{\sigma}=x-1 \geq 0$ and $\abs{\tau}=n-x+1\geq 1$
for some $1 \leq x \leq n$. 
Here, $\sigma$ is allowed to be empty.
Define $ \psi_{\rm S}(Q) \coloneqq \pi \in \perm_{n+1}$ by
\[
\pi(i)=
\begin{cases}
\hat{\pi}(i) & \text{ if $1\leq i\leq x-1$,}\\
n+1 & \text{ if \(  i = x \),}\\
\hat{\pi}(i-1) & \text{ if \( x+1\leq i \leq n+1 \).}
\end{cases}
\]
Thus, we have $z=x-1 < n$ and $w=x$ for $\pi$.

\item 
For \( a_n = k+2 \geq 2\) and \( b_n=1 \) with $k \geq 0$, 
due to $$\block(\hat{\pi}) \geq (a_n - b_n) +1 = k+2,$$
$\hat{\pi}$ can be denoted as 
\( \hat{\pi}=\sigma \oplus \omega  \in  \perm_{n}\) 
satisfying 
$\block(\omega) = k+1$ with $\abs{\sigma}=x-1 \geq 1$ and $\abs{\omega}=n-x+1 \geq 1$
for some $2 \leq x \leq n$. 
Letting $y = \hat{\pi}^{-1}(x-1)$, 
define $ \psi_{\rm S}(Q) \coloneqq \pi \in \perm_{n+1}$ by 
\[
\pi(i) = 
\begin{cases}
\hat{\pi}(i) & \text{ if $1\leq i \leq x-1$ and $i\neq y$,}\\
n & \text{ if \(  i = y \),}\\
n+1 & \text{ if \(  i = x \),}\\
\hat{\pi}(i-1)-1 & \text{ if \( x+1\leq i \leq n+1 \).}\\
\end{cases}
\]
Thus, we have $z=n$ and $w=x-1$ for $\pi$.

\item 
For \( a_n = k+2 \geq 2 \) and \( b_n= -j \leq 0\)  with $k \geq 0$ and $j \geq 0$,
due to 
$$\block(\hat{\pi})  \geq (a_n - b_n) +1 = k+j+3,$$
$\hat{\pi}$ can be denoted as 
\( \hat{\pi}=\sigma \oplus \omega \oplus \tau \in \perm_{n}\) 
satisfying
$\block(\omega) = k+1$ and $\block(\tau) = j+1$ 
with $\abs{\sigma} = x-1 \geq 1$, $\abs{\omega} = z-x+1 \geq 1$, and $\abs{\tau} = n-z \geq 1$ 
for some $2 \leq x \leq z < n$. 
Letting $y = \hat{\pi}^{-1}(x-1)$, 
define $ \psi_{\rm S}(Q) \coloneqq \pi \in \perm_{n+1}$ by
\[
\pi(i) = 
\begin{cases}
\hat{\pi}(i) & \text{ if $1\leq i \leq x-1$ and $i\neq y$,}\\
z & \text{ if \(  i = y \),}\\
n+1 & \text{ if \(  i = x \),}\\
\hat{\pi}(i-1)-1 & \text{ if \( x+1 \leq i \leq z+1 \),}\\
\hat{\pi}(i-1) & \text{ if \( z+2 \leq i \leq n+1 \).}
\end{cases}
\]
Thus, we have $z<n$ and $w=x-1 < z$ for $\pi$.

\end{enumerate}
Hence, we conclude that 
\( \phi_{\rm S} \) and \( \psi_{\rm S} \) are bijective,
which completes the proof.

\end{proof}

\begin{example}
There are six \( (2341,2431,3241) \)-avoiding permutations of length \( 3 \). From the construction of \( \phi_{\rm S} \), we have 
\begin{align*}
\phi_{\rm S}(231)&=(0,1)(2,1)=Q^{(1)}, &
\phi_{\rm S}(312)&=(0,1)(1,0)=Q^{(2)}, &
\phi_{\rm S}(321)&=(1,1)(1,1)=Q^{(3)}, \\
\phi_{\rm S}(132)&=(0,1)(1,1)=Q^{(4)}, &
\phi_{\rm S}(213)&=(1,1)(0,1)=Q^{(5)}, &
\phi_{\rm S}(123)&=(0,1)(0,1)=Q^{(6)},
\end{align*}
where each \( Q^{(i)} \in  \mathcal{F}_2 \) is given in Figure~\ref{fig:F2}. We also check that 
\( \height(\phi_{\rm S}(\pi))=\block(\pi)-1 \) for $\pi\in\perm_3$.
\end{example}

\begin{rem}\label{rem:S-F}
\begin{enumerate}
\item 
The \emph{ascent number} of $\pi\in \mathfrak{S}_n$ is defined by 
\( \asc(\pi)\coloneqq |\{ i\in [n-1] \deli \pi(i)<\pi(i+1) \}| \).
It is easy to check that
$\asc(\pi)=\asc(\hat{\pi})+1$ only in the case $(a_n, b_n) = (0, 1)$ 
of the proof of Theorem~\ref{thm:S-F}
and $\asc(\pi)=\asc(\hat{\pi})$ in the rest of the cases.
So for all $\pi \in \bigcup_{n\geq 0}\perm_{n+1}$,
we obtain $$\asc(\pi)=\north(\phi_{\rm S}(\pi)).$$

\item 
For $\pi \in \mathfrak{S}_n$ and $i \in [n]$, 
we define $\pi(i)$ to be \emph{critical}
if $\pi(j)<\pi(i)$ and $\pi(k)<\pi(i)$ implies $\pi(j)<\pi(k)$ for all $j<i<k$.
For example, given $\pi=2413$, $\pi(1)$, $\pi(3)$, and $\pi(4)$ is critical. However, $\pi(2)=4$ is not critical because $\pi(1)<\pi(2)$ and $\pi(3)<\pi(2)$ does not imply $\pi(1)<\pi(3)$.  
Let $\crit(\pi)$ denote the number of critical elements in $\pi$.
It is easy to check that
$\crit(\pi)=\crit(\hat{\pi})+1$
only in the case $(a_n, b_n) = (0, 1)$ or $(1, b)$ for $b \leq 1$ 
of the proof of Theorem~\ref{thm:S-F}
and $\crit(\pi)=\crit(\hat{\pi})$ in the rest of the cases.
So for all $\pi \in \bigcup_{n\geq 0}\perm_{n+1}$,
we obtain $$\crit(\pi)=\aone(\phi_{\rm S}(\pi))+\north(\phi_{\rm S}(\pi))+1.$$
In conclusion, we also obtain 
\begin{align*}
\height(\phi_{\rm S}(\pi)) &= \block(\pi)-1,\\
\north(\phi_{\rm S}(\pi)) &= \asc(\pi),\\
\aone(\phi_{\rm S}(\pi)) &= \crit(\pi)-\asc(\pi)-1,\\
\bone((\phi_{\rm F}\circ\phi_{\rm S})(\pi)) &= \crit(\pi)-1,
\end{align*}
where $\phi_{\rm F}$ is the involution on $ \bigcup_{n\geq 0}\Fp_n$ introduced in subsection 2.1.
\end{enumerate}
\end{rem}


\section{Pattern avoiding inversion sequences}
\label{sec:I-J}

A sequence \( e=(e_1, e_2, \dots, e_n) \) is called an \emph{inversion sequence of length \( n \)} if \( 0 \leq e_i <i \) for all \( i \in [n] \). We denote the largest letter in \( e \) by \( \max(e) \). 
If the position of the rightmost occurrence of \( \max(e) \) is \( k \), i.e., \( e_k= \max(e) \) and  \( e_i< \max(e) \) for all \( i>k \), then we define \( \maxid(e)\coloneqq k \) and 
\[ \hat{e}\coloneqq(e_1, \dots, e_{k-1}, e_{k+1}, \dots, e_n). \] 
Clearly, we have
$$\max(e)\ge \max(\hat{e})\quad\text{and}\quad\max(e)\le \maxid(e)-1.$$

Pattern avoidance in inversion sequences can be defined in a similar way in permutations. Given a word \( w \in \{ 0,1, \dots, k-1 \}^{k} \), the \emph{reduction} of \( w \) is the word obtained by replacing the \( i \)th smallest entries of \( w \) with \( i-1 \). 
We say that an inversion sequence \( e=(e_1, e_2, \dots, e_n)\) \emph{contains} the pattern \( w \) if there exist some indices \( i_1 < i_2 < \cdots< i_k \) such that the reduction of \( e_{i_1} e_{i_2} \dots e_{i_k} \) is \( w \). Otherwise, \( e \) is said to \emph{avoid} the pattern \( w \). 
When an inversion sequence $e$ avoids two patterns \( w_1 \) and \( w_2 \), $e$ is called a $(w_1, w_2)$-avoiding inversion sequence.


\subsection{(101,102)-avoiding inversion sequences}
\label{sec:I}
Let $\invi_n$ be the set of \( (101,102) \)-avoiding inversion sequences of length $n$.
We construct a simple bijection from \( (101,102) \)-avoiding inversion sequences to \( F \)-paths. 
Define
\[ 
\phi_{\rm I} : \bigcup_{n\geq 0}\invi_{n+1} \to \bigcup_{n\geq 0} \mathcal{F}_n 
\] 
recursively as follows. 
Let \( \phi_{\rm I}(0)\coloneqq \emptyset \). 
Now suppose \( n \geq 1 \). 
For \( e=(e_1,e_2,\dots, e_{n+1})\in \invi_{n+1} \) 
with
\begin{align}
\max(e)-\max(\hat{e})=a_n \quad\text{ and }\quad \maxid(e)-\maxid(\hat{e})=b_n, \label{eq:I-F}
\end{align}
let \( \phi_{\rm I}(e) \coloneqq \phi_{\rm I}(\hat{e}) (a_n,b_n) \). 
Note that \( \hat{e} \in \mathcal{I}_n(101,102) \) 
and \( (a_n,b_n)\in F \) since \( e \) avoids the patterns \( 101 \) and \( 102 \).
If \( \phi_{\rm I}(\hat{e})=(a_1,b_1)(a_2,b_2)\dots (a_{n-1},b_{n-1}) \), 
then it satisfies that \( \sum_{j \leq i} a_j \leq \sum_{j \leq i} b_j \) for each \( i\in[n-1] \), 
and \( \sum_{j\leq n-1}a_j=\max(\hat{e}) \) 
and \( \sum_{j \leq n-1} b_j =\maxid(\hat{e})-1 \) 
by the construction. 
Moreover, we also have 
\begin{align*} 
\sum_{j\le n} a_j 
= \max(\hat{e})+\left(\max(e)-\max(\hat{e})\right) 
&= \max(e)\\
&\le \maxid(e)-1
= \maxid(\hat{e})+\left(\maxid(e)-\maxid(\hat{e})\right)-1  
= \sum_{j\le n} b_j 
\end{align*}
so that \( \phi_{\rm I}(e) \in \mathcal{F}_n \). 
Hence, the map is well-defined and injective. 
Furthermore, we can recover \( e \) by finding \( \max(e) \) and \( \maxid(e) \) using \eqref{eq:I-F}.
Hence, \( \phi_{\rm I} \) is a bijection 
with the property 
\[ \height(\phi_{\rm I}(e))=\maxid(e)-\max(e)-1.\]

\begin{example}
There are six \( (101,102) \)-inversion sequences of length \( 3 \). From the construction of \( \phi_{\rm I} \), we have 
\begin{align*}
\phi_{\rm I}(0,1,0)&=(0,1)(1,0)=Q^{(1)}, &
\phi_{\rm I}(0,0,2)&=(0,1)(2,1)=Q^{(2)}, &
\phi_{\rm I}(0,1,2)&=(1,1)(1,1)=Q^{(3)}, \\
\phi_{\rm I}(0,0,1)&=(0,1)(1,1)=Q^{(4)}, &
\phi_{\rm I}(0,1,1)&=(1,1)(0,1)=Q^{(5)}, &
\phi_{\rm I}(0,0,0)&=(0,1)(0,1)=Q^{(6)},
\end{align*}
where each $Q^{(i)}\in\Fp_2$ is given in Figure~\ref{fig:F2}.
We also check that \( \height(\phi_{\rm I}(e))=\maxid(e)-\max(e)-1\) for $e\in \invi_{3}$. 
\end{example}

From this construction, we have the following result.

\begin{thm}\label{thm:I-F}
Let \( n \) be a nonnegative integer. For each step \( (a,b)\in F \), we have
\[
|\{ e\in \invi_{n+1} \deli \max(e)-\max(\hat{e})=a, ~\maxid(e)-\maxid(\hat{e})=b \}|=|\mathcal{F}_n\{ (a,b) \}|.
\]
Consequently, \( |\invi_{n+1}|=|\mathcal{F}_n| \).
\end{thm}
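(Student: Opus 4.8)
The plan is to read off Theorem~\ref{thm:I-F} directly from the bijection $\phi_{\rm I}$ constructed in this subsection; no new map needs to be built. First I would recall the defining property of $\phi_{\rm I}$: for $e \in \invi_{n+1}$ with $n \geq 1$ one has $\phi_{\rm I}(e) = \phi_{\rm I}(\hat{e})\,(a_n,b_n)$, where $(a_n,b_n) = (\max(e)-\max(\hat{e}),\ \maxid(e)-\maxid(\hat{e}))$, and that this pair lies in $F$ because $e$ avoids $101$ and $102$. The discussion above already establishes that $\phi_{\rm I}$ is a well-defined bijection from $\invi_{n+1}$ onto $\mathcal{F}_n$: well-definedness uses $0 \le \max(\hat{e}) \le \max(e) \le \maxid(e)-1$ to get $\sum_{j\le i}a_j \le \sum_{j\le i}b_j$ for all $i$, and invertibility follows because $\hat{e}$ determines $\max(\hat{e})$ and $\maxid(\hat{e})$, whence $\max(e)$ and $\maxid(e)$ are recovered from $(a_n,b_n)$, and then $e$ is reconstructed by reinserting its maximum at position $\maxid(e)$.

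The one observation to add is that the last step of the $F$-path $\phi_{\rm I}(e)$ is exactly the pair $(a_n,b_n)$. Hence $\phi_{\rm I}(e)$ ends with a prescribed step $(a,b)\in F$ if and only if $\max(e)-\max(\hat{e})=a$ and $\maxid(e)-\maxid(\hat{e})=b$. Since $\phi_{\rm I}$ is a bijection, restricting it to the left-hand set of the theorem yields a bijection onto $\mathcal{F}_n\{(a,b)\}$, which gives the claimed per-step cardinality identity.

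For the ``consequently'' part I would argue by summation over $(a,b) \in F$. For $n \geq 1$, every path in $\mathcal{F}_n$ is nonempty and so has a well-defined last step lying in $F$; therefore the family $\{\mathcal{F}_n\{(a,b)\} \deli (a,b)\in F\}$ is a partition of $\mathcal{F}_n$. Likewise every $e \in \invi_{n+1}$ with $n+1 \geq 2$ has a well-defined $\hat{e}$, so the corresponding left-hand sets partition $\invi_{n+1}$; and under $\phi_{\rm I}$ these two partitions correspond block by block. Adding the per-step equalities gives $|\invi_{n+1}| = |\mathcal{F}_n|$. The base case $n=0$ is trivial: $\invi_1 = \{(0)\}$ and $\mathcal{F}_0 = \{\emptyset\}$, both of size $1$, consistent with $\phi_{\rm I}(0) = \emptyset$.

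I do not expect a genuine obstacle here: the substantive work (verifying $(a_n,b_n)\in F$, the staircase inequalities, and reversibility of the single step) was already carried out when $\phi_{\rm I}$ was defined, so the proof is essentially the bookkeeping that the recursive step $e \leftrightarrow (\hat{e}, (a_n,b_n))$ is a bijection at each level. The only point worth stating carefully is why pattern avoidance of $101$ and $102$ is precisely what forces $b_n \le 1$ (equivalently, why $\maxid(e) - \maxid(\hat{e}) \le 1$): if $\maxid(e) \ge \maxid(\hat{e}) + 2$ then there are at least two entries of $e$ strictly between the two rightmost occurrences of $\max(\hat{e})$ and $\max(e)$ in the appropriate positions, producing a $101$ or a $102$; this is exactly the structural input behind $(a_n,b_n)\in F$ and hence behind the whole theorem.
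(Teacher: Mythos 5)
Your proposal is correct and follows essentially the same route as the paper, which simply reads Theorem~\ref{thm:I-F} off from the construction of $\phi_{\rm I}$: the last step of $\phi_{\rm I}(e)$ records $(\max(e)-\max(\hat{e}),\maxid(e)-\maxid(\hat{e}))$, so the bijection restricts to the stated sets, and summing over $(a,b)\in F$ gives $|\invi_{n+1}|=|\mathcal{F}_n|$. One tiny quibble: in your closing remark, a gap $\maxid(e)\ge\maxid(\hat{e})+2$ guarantees at least \emph{one} (not two) intermediate entry, which together with the two maxima already produces the forbidden $101$ or $102$.
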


\begin{rem}\label{rem:I-F}
For $e\in\invi_{n}$, we define
$\omi(e)\coloneqq |\{ i\in [n] \deli i \notin e \}|$ and
$\cons(e)\coloneqq |\{ i\in [n-1] \deli i-1, i \in e \}|$.
By Theorem~\ref{thm:I-F}, it
is easy to check that
$$\omi(e)=\north(\phi_{\rm I}(e))\quad\text{and}\quad\cons(e)=\aone(\phi_{\rm I}(e))$$
for $e \in \bigcup_{n\geq 0}\invi_{n+1}$.
\end{rem}

%


\subsection{(101,021)-avoiding inversion sequences}
\label{sec:J}
Let \( \invj_n \) be the set of \( (101,021) \)-avoiding inversion sequences of length \( n \).
For $\je \in \invj_n$, we will refer to $\je=(\je_1,\je_2,\dots,\je_n)$ as $\je=\je_1 \je_2\dots \je_n$ for convenience.
Corteel et al. \cite{CMSW16} showed that an inversion sequence is \( 021 \)-avoiding if and only if its positive elements are weakly increasing. Hence, an inversion sequence \( \je  \in \invj_{n+1}\) is \( (101,021) \)-avoiding if and only if 
$\je$ is of the form 
\begin{align}\label{jinv-form}
0^{i_0}1^{j_1}0^{i_1}\cdots n^{j_n} 0^{i_n} 
\end{align}
with nonnegative exponents 
such that $j_k=0$ implies $i_k=0$. We write \( \first(\je)=i_0 \) and 
\( \single(\je)=| \{ k \in [n] \deli j_k=1 \}| \). 
Now we construct a simple bijection from \( (101,021) \)-avoiding inversion sequences to \( F \)-paths. Define
\[ 
\phi_{\rm J} : \bigcup_{n\geq 0}\invj_{n+1} \to \bigcup_{n\geq 0} \mathcal{F}_n 
\] 
as follows. Let \( \phi_{\rm J}(0)\coloneqq \emptyset \). Now suppose that \( n \geq 1 \). 
If $\je\in\invj_{n+1}$ is of the form
\[ 
0^{\first(\je)}1^{a_n}0^{1-b_n}2^{a_{n-1}}0^{1-b_{n-1}}\dots n^{a_1}0^{1-b_1},
\] 
then we define \( \phi_{\rm J}(\je) \coloneqq  (a_1,b_1)(a_2,b_2)\dots(a_n,b_n) \). 

\begin{example}
There are six \( (101,021) \)-inversion sequences of length \( 3 \). From the construction of \( \phi_{\rm J} \), we have 
\begin{align*}
\phi_{\rm J}(010)&=(0,1)(1,0)=Q^{(1)}, &
\phi_{\rm J}(011)&=(0,1)(2,1)=Q^{(2)}, &
\phi_{\rm J}(012)&=(1,1)(1,1)=Q^{(3)}, \\
\phi_{\rm J}(001)&=(0,1)(1,1)=Q^{(4)}, &
\phi_{\rm J}(002)&=(1,1)(0,1)=Q^{(5)}, &
\phi_{\rm J}(000)&=(0,1)(0,1)=Q^{(6)},
\end{align*}
where each $Q^{(i)}\in\Fp_2$ is given in Figure~\ref{fig:F2}.
One can see that 
\( \height(\phi_{\rm J}(\je))=\first(\je)-1\),
\( \north(\phi_{\rm J}(\je))=\omi(\je)\), and 
\( \aone(\phi_{\rm J}(\je))=\single(\je)\)
for $\je\in\invj_3$.
\end{example}

In \cite[Section 4.1]{YanLin20}, Yan and Lin constructed a simple bijection \( \mathcal{H} \) from the set \( \invj_{n} \) to the set \( \mathcal{D}_n \) of \emph{colored Dyck paths} of semilength \( n \) consisting of east \( (1,0) \) and north \( (0,1) \) steps satisfying 

\begin{itemize}
\item all steps are weakly below the line \( y=x \), 
\item each east step is colored black or red,
\item all the east step of height \( 0 \) are colored red, 
\item the first east step of each positive height is colored black,
\item red east steps of the same height are connected.   
\end{itemize}

We note that these paths \( D \) are obtained by rotating the restricted bicolored paths \( B \) of semilength \( n \) clockwise by 135 degrees. We write \( \rho(D)=B \). The map \( {\phi_{\rm J}} \) can be described as \( \phi_{\rm B} \circ \rho \circ \mathcal{H} \). Thus, \( \phi_{\rm J} \) is a bijection.

From the construction of \( \phi_{\rm J} \), we have the following result.

\begin{thm}\label{thm:J-F}
Let \( n \) be a nonnegative integer. For each step \( (a,b)\in F \), we have
\[
|\{ 0^{i_0}1^{j_1}0^{i_1}\cdots n^{j_n} 0^{i_n}\in \invj_{n+1} \deli  j_1=a,~i_1=1-b\}|=|\mathcal{F}_n\{ (a,b) \}|.
\]
Consequently, \( |\invj_{n+1}|=|\mathcal{F}_n| \).
\end{thm}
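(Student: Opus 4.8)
The plan is to verify directly that the map $\phi_{\rm J}$ defined just above the statement is a well-defined bijection, and then read off the refined enumeration $|\{ 0^{i_0}1^{j_1}0^{i_1}\cdots n^{j_n} 0^{i_n}\in \invj_{n+1} \deli j_1=a,~i_1=1-b\}|=|\mathcal{F}_n\{ (a,b) \}|$ by inspecting what the last step of the $F$-path records. Indeed, in the form \eqref{jinv-form} of $\je\in\invj_{n+1}$, after fixing $\first(\je)=i_0$ the remaining data is the word $1^{j_1}0^{i_1}2^{j_2}0^{i_2}\cdots n^{j_n}0^{i_n}$; under $\phi_{\rm J}$ the pair $(j_1,i_1)$ corresponds precisely to the last step $(a_n,b_n)$ via $j_1=a_n$ and $i_1=1-b_n$ (note the indexing is reversed: the letter $k$ in $\je$ is matched to step $(a_{n+1-k},b_{n+1-k})$). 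Since $j_1=0$ forces $i_1=0$, the only pair $(j_1,i_1)$ with $j_1=0$ is $(0,0)$, which matches the step $(0,1)$, exactly as required for $(a,b)=(0,1)\in F$; and for $j_1=a\ge 1$, $i_1$ ranges over $\{0,1\}$, matching steps $(a,1)$ and $(a,0)$. So the refined count follows once bijectivity is established.

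For bijectivity I would use the route already sketched in the excerpt: $\phi_{\rm J}=\phi_{\rm B}\circ\rho\circ\mathcal{H}$, where $\mathcal{H}\colon\invj_{n}\to\mathcal{D}_n$ is Yan and Lin's bijection to colored Dyck paths \cite[Section 4.1]{YanLin20}, $\rho$ is the $135^\circ$ clockwise rotation sending a colored Dyck path $D$ to the restricted bicolored Dyck path $B=\rho(D)$, and $\phi_{\rm B}$ is the bijection of Theorem~\ref{thm:B-F}. Each of these three maps is a bijection — $\mathcal{H}$ by \cite{YanLin20}, $\rho$ by the explicit correspondence between the five defining conditions on colored Dyck paths and the segment decomposition $B=u^{i_1}\dr^{j_1}\db^{k_1}\cdots$ of restricted bicolored Dyck paths, and $\phi_{\rm B}$ by the discussion preceding Theorem~\ref{thm:B-F} — so the composite is a bijection. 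I would then check that this composite does agree with the directly-defined $\phi_{\rm J}$ on the level of the explicit formula: tracing a sequence of the form $0^{\first(\je)}1^{a_n}0^{1-b_n}2^{a_{n-1}}0^{1-b_{n-1}}\cdots n^{a_1}0^{1-b_1}$ through $\mathcal{H}$, then $\rho$, then $\phi_{\rm B}$, yields the step sequence $(a_1,b_1)(a_2,b_2)\cdots(a_n,b_n)$. This is a routine but slightly fiddly verification of matching conventions (which letter of $\je$ indexes which step, which exponent is a height vs.\ a run length, and the direction of the rotation).

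The main obstacle is precisely this bookkeeping of conventions across three different combinatorial models: Yan--Lin's colored Dyck paths use east/north steps and heights measured against $y=x$, the restricted bicolored Dyck paths $B$ use $u,\dr,\db$ with the exponent pattern $u^{i_1}\dr^{j_1}\db^{k_1}\cdots u^{i_\ell}\dr^{j_\ell}$, and $F$-paths use steps $(a,b)$ with the constraint $x_i\le y_i$. One must be careful that the ``first east step of each positive height is colored black'' condition corresponds to each middle segment $u\dr^{1-b_i}\db^{a_i}$ having a genuine black down step (hence $a_i\ge 1$ whenever $b_i\le 0$, matching $(a_i,b_i)\in F$), that ``red east steps of the same height are connected'' corresponds to the $\dr$'s appearing in a single run $\dr^{j_m}$, and that ``all east steps of height $0$ are red'' is what makes the decomposition end in $u\dr^{j_\ell}$ with no trailing black steps. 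Once the dictionaries are lined up, however, everything is immediate, and it is cleanest to present the proof as: (i) recall that $\mathcal{H}$, $\rho$, $\phi_{\rm B}$ are bijections; (ii) observe that $\phi_{\rm J}=\phi_{\rm B}\circ\rho\circ\mathcal{H}$; (iii) conclude bijectivity; (iv) identify the fibre over $\mathcal{F}_n\{(a,b)\}$ with the set of $\je$ having $(j_1,i_1)=(a,1-b)$, giving the stated refined identity, and sum over $(a,b)\in F$ for the final equality $|\invj_{n+1}|=|\mathcal{F}_n|$.
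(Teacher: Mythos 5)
Your proposal follows the paper's own route exactly: the paper gives no separate proof of this theorem beyond the construction of $\phi_{\rm J}$, whose bijectivity is justified precisely by the factorization $\phi_{\rm J}=\phi_{\rm B}\circ\rho\circ\mathcal{H}$, and the refined count is then read off from the identification $j_1=a_n$, $i_1=1-b_n$ with the last step. One small slip: for $j_1=a\ge 1$ the exponent $i_1$ ranges over \emph{all} nonnegative integers (not just $\{0,1\}$), matching every step $(a,b)$ with $b=1-i_1\le 1$, including negative $b$; this does not affect the argument since your general identification $(j_1,i_1)=(a,1-b)$ is correct.
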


\begin{prop}\label{prop:J-F}
For a nonnegative integer \( n \), let \( \je\in \invj_{n+1} \). If \( \phi_{\rm J}(\je)=Q \), then the following hold.
\begin{enumerate}
\item \( \height(Q)=\first(\je)-1 \),
\item \( \north(Q)=\omi(\je) \), 
\item \( \aone(Q)=\single(\je) \). 
\end{enumerate}
\end{prop}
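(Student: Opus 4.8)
The plan is to read off all three identities directly from the definition of the bijection $\phi_{\rm J}$ together with the characterization \eqref{jinv-form} of $(101,021)$-avoiding inversion sequences. Write $\je\in\invj_{n+1}$ in the canonical form
\[
\je=0^{\first(\je)}1^{a_n}0^{1-b_n}2^{a_{n-1}}0^{1-b_{n-1}}\cdots n^{a_1}0^{1-b_1},
\]
so that $\phi_{\rm J}(\je)=Q=(a_1,b_1)(a_2,b_2)\cdots(a_n,b_n)$ by definition. The key observation is that in this form the exponent of the letter $k$ equals $a_{n+1-k}$, the exponent of the block of $0$'s immediately following it equals $1-b_{n+1-k}$, and the only $0$'s not accounted for this way are the $\first(\je)$ leading zeros. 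Each $1-b_i$ is either $0$ or $1$ since $b_i\le 1$, and $b_i=1$ forces the corresponding $a_i$-block to be nonempty (i.e.\ $a_i\ge 1$) precisely by the condition ``$j_k=0$ implies $i_k=0$'' in \eqref{jinv-form}; this is the same compatibility that makes $(a_i,b_i)\in F$.

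For part (1), I would compute the terminal point of $Q$. Since $Q$ starts at the origin, its terminal point is $\bigl(\sum_{i\le n}a_i,\ \sum_{i\le n}b_i\bigr)$, and $\height(Q)=\sum_{i\le n}b_i-\sum_{i\le n}a_i$. On the inversion sequence side, $\sum_{i\le n}a_i=\sum_{k=1}^{n}(\text{exponent of }k)$ is the number of positive entries of $\je$, while $\sum_{i\le n}(1-b_i)$ is the number of $0$'s occurring after the leading block, so $n+1=\first(\je)+\sum_{i\le n}a_i+\sum_{i\le n}(1-b_i)$, i.e. $\sum_{i\le n}(b_i-a_i)=\first(\je)-1$. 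This gives $\height(Q)=\first(\je)-1$. (Alternatively, one could invoke Proposition~\ref{prop:B-F} via the factorization $\phi_{\rm J}=\phi_{\rm B}\circ\rho\circ\mathcal H$ noted just above, but the direct count is cleaner.)

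For part (2), $\north(Q)$ counts the steps $(a_i,b_i)=(0,1)$, i.e.\ the indices $i$ with $a_i=0$; by the compatibility condition this forces $1-b_i=0$ as well, so such an index contributes the letter $k=n+1-i$ with exponent $0$ — in other words $k$ does not appear in $\je$ — and conversely every $k\in[n]$ missing from $\je$ arises this way. Also $n+1$ itself never appears (entries of an inversion sequence of length $n+1$ are $<n+1$), but $\omi(\je)$ is taken over $[n+1]$... here I should double check the index set in the definition of $\omi$; in Remark~\ref{rem:I-F} it is $|\{i\in[n]: i\notin e\}|$ for $e\in\invi_n$, so for $\je\in\invj_{n+1}$ the relevant count is over $[n]$ (equivalently, $n+1$ is automatically absent and excluded), and the bijection between ``$i$ with $a_i=0$'' and ``$k\in[n]$ with $\je$ missing $k$'' is exact. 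Hence $\north(Q)=\omi(\je)$. For part (3), $\aone(Q)$ counts the steps with $a_i=1$, i.e.\ the letters $k=n+1-i$ appearing with exponent exactly $1$, which is exactly $\single(\je)=|\{k\in[n]:j_k=1\}|$.

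The main obstacle is purely bookkeeping: getting the index correspondence $k\leftrightarrow n+1-i$ right throughout, and being careful about which ambient interval ($[n]$ vs.\ $[n+1]$) the statistics $\omi$ and $\single$ are defined over, so that the off-by-one between a length-$(n+1)$ inversion sequence and a length-$n$ $F$-path is handled consistently. Once the dictionary ``exponent of $k$ in $\je$ $=a_{n+1-k}$, exponent of the following $0$-block $=1-b_{n+1-k}$, plus $\first(\je)$ leading zeros'' is stated precisely, all three equalities are immediate, so I would spend the bulk of the write-up pinning that dictionary down and then dispatch (1)–(3) in one short paragraph each.
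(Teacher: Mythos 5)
Your proposal is correct and follows essentially the same route as the paper: part (1) by the length count $n+1=\first(\je)+\sum_{i\le n}(a_i+1-b_i)$, and parts (2)--(3) by reading off that the step $(a_i,b_i)$ corresponds to the segment $(n-i+1)^{a_i}0^{1-b_i}$, so that $(0,1)$-steps match omitted letters and steps with first coordinate $1$ match letters of multiplicity one. Your side remark about the index set for $\omi$ correctly resolves the off-by-one in Remark~\ref{rem:I-F} in the way the paper's examples and Theorem~\ref{prop:joint} require.
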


\begin{proof}
If 
\(\je= 0^{\first(\je)}1^{a_n}0^{1-b_n}2^{a_{n-1}}0^{1-b_{n-1}} \dots n^{a_1}0^{1-b_1} \), 
then \( Q=(a_1,b_1)(a_2,b_2) \dots (a_n,b_n) \) and
\[
\height(\phi_{\rm J}(\je))=\sum_{i\leq n}(b_i-a_i) = n- \sum_{i \leq n} (a_i -b_i+1) =\first(\je)-1.
\]
The remaining follows since \( \phi_{\rm J}^{-1} \) sends the step \( (a_i,b_i) \) to the segment \(  (n-i+1)^{a_i}0^{1-b_i} \). 
\end{proof}


\section{Weighted ordered trees}
\label{sec:T}

An \emph{ordered tree} (also called a \emph{plane tree} or \emph{Catalan tree}) \( T \) can be defined recursively as follows. One special vertex is called the \emph{root} of \( T \) and denoted by \( r_T \). Either \( T \) consists of the single vertex \( r_T \) or else it has a sequence \( (T_1, T_2, \dots, T_m) \) of subtrees \( T_i \) for \( 1\leq i \leq m \), each of which is an ordered tree. The root \( r_T \) is written on the top, with an edge drawn from \( r_T \) to the root of each of its subtrees, which are written in the  \emph{right-to-left} order. 
In an ordered tree \( T \), the number of descendants of a vertex \( v\in T \) is called the \emph{outdegree} of \( v \) and denoted by \( \deg^{+}(v) \). We call a vertex \( v \) with \( \deg^{+}(v)=0 \) a \emph{leaf}. Let us denote the number of leaves in \( T \) by \( \leaf(T) \).
A \emph{weighted ordered tree} is an ordered tree, where each interior vertex (non-root, non-leaf) \( v \) is weighted by a positive integer \( \wt(v) \) less than or equal to its outdegree. For convenience, sometimes we write \( \wt(v)=0 \) for a leaf \( v \). 
Given a weighted ordered tree $T$, let $\one(T)$ be the number of vertices $v$ in $T$ with $\wt(v)=1$.
Let \( \mathcal{T}_{n} \) denote the set of all the weighted ordered trees with \( n \) edges. Figure~\ref{fig:T3} shows the elements in \( \mathcal{T}_3 \).

Yan and Lin \cite[Theorem 6.1]{YanLin20} also constructed a bijection \( \Phi \) from the set \( \mathcal{T}_{n} \) to the set \( \mathcal{D}_n \). We define a bijection 
\[ 
\phi_{\rm T} : \bigcup_{n\geq 0}\mathcal{T}_{n+1} \to \bigcup_{n\geq 0} \mathcal{F}_n 
\] 
by \( \phi_{\rm B} \circ \rho \circ \Phi \), or equivalently, as follows. For \( T\in \mathcal{T}_{1} \), let \(  \phi_{\rm T}(T)\coloneqq \emptyset \). Now consider \( T\in \mathcal{T}_{n+1} \) with \( n\geq 1 \). 
Let \( v_0(=r_T), v_1, \dots, v_{n+1} \) be the vertices of \( T \) in \emph{preorder} (also known as \emph{depth first search}). 
We let \( \phi_{\rm T}(T)\coloneqq (a_1,b_1)(a_2,b_2)\dots(a_n,b_n)\), where 
\[ 
(a_i,b_i)=\begin{cases}
(\wt(v),\wt(v)-\deg^{+}(v)+1) & \mbox{ if \( v_{n-i+2} \) is the left-most child of an interior vertex \( v \),}\\
(0,1) & \mbox{ otherwise}.
\end{cases}
\]

\begin{figure}[t]
\centering

\begin{tikzpicture}[scale=0.7]
\coordinate (0) at (0,0);
\coordinate (1) at (0,-1);
\coordinate (2) at (-0.7,-2);
\coordinate (3) at (0.7,-2);

\foreach \i in {0,1,2,3}
\filldraw (\i) circle (2.5pt);
\draw[ultra thick]
(0) -- (1) 
(1) -- (2) 
(1) -- (3);

\node[above] at (0) {\scriptsize \color{gray} \( v_0\)};
\node[left] at (1) {\scriptsize \color{gray} \( v_1\)};
\node[below] at (2) {\scriptsize \color{gray} \( v_2\)};
\node[below] at (3) {\scriptsize \color{gray} \( v_3\)};

\node[above=2pt, right] at (1) {\( 1 \)};

\node at (0,-4) {\( T^{(1)} \)};
\end{tikzpicture}
\qquad 
\begin{tikzpicture}[scale=0.7]
\coordinate (0) at (0,0);
\coordinate (1) at (0,-1);
\coordinate (2) at (-0.7,-2);
\coordinate (3) at (0.7,-2);

\foreach \i in {0,1,2,3}
\filldraw (\i) circle (2.5pt);
\draw[ultra thick]
(0) -- (1) 
(1) -- (2) 
(1) -- (3);

\node[above] at (0) {\scriptsize \color{gray} \( v_0\)};
\node[left] at (1) {\scriptsize \color{gray} \( v_1\)};
\node[below] at (2) {\scriptsize \color{gray} \( v_2\)};
\node[below] at (3) {\scriptsize \color{gray} \( v_3\)};

\node[above=2pt, right] at (1) {\( 2 \)};

\node at (0,-4) {\( T^{(2)} \)};
\end{tikzpicture}
\qquad 
\begin{tikzpicture}[scale=0.7]
\coordinate (0) at (0,0);
\coordinate (1) at (0,-1);
\coordinate (2) at (0,-2);
\coordinate (3) at (0,-3);

\foreach \i in {0,1,2,3}
\filldraw (\i) circle (2.5pt);
\draw[ultra thick]
(0) -- (1) 
(1) -- (2) 
(1) -- (3);

\node[above] at (0) {\scriptsize \color{gray} \( v_0\)};
\node[left] at (1) {\scriptsize \color{gray} \( v_1\)};
\node[left] at (2) {\scriptsize \color{gray} \( v_2\)};
\node[below] at (3) {\scriptsize \color{gray} \( v_3\)};

\node[right] at (1) {\( 1 \)};
\node[right] at (2) {\( 1 \)};
\node at (0,-4) {\( T^{(3)} \)};
\end{tikzpicture}
\qquad
\begin{tikzpicture}[scale=0.7]
\coordinate (0) at (0,0);
\coordinate (1) at (-0.7,-1);
\coordinate (2) at (0.7,-1);
\coordinate (3) at (-0.7,-2);

\foreach \i in {0,1,2,3}
\filldraw (\i) circle (2.5pt);
\draw[ultra thick]
(0) -- (1) 
(0) -- (2) 
(1) -- (3);

\node[above] at (0) {\scriptsize \color{gray} \( v_0\)};
\node[left] at (1) {\scriptsize \color{gray} \( v_1\)};
\node[below] at (2) {\scriptsize \color{gray} \( v_3\)};
\node[below] at (3) {\scriptsize \color{gray} \( v_2\)};

\node[right] at (1) {\( 1 \)};
\node at (0,-4) {\( T^{(4)} \)};
\end{tikzpicture}
\qquad
\begin{tikzpicture}[scale=0.7]
\coordinate (0) at (0,0);
\coordinate (1) at (-0.7,-1);
\coordinate (2) at (0.7,-1);
\coordinate (3) at (0.7,-2);

\foreach \i in {0,1,2,3}
\filldraw (\i) circle (2.5pt);
\draw[ultra thick]
(0) -- (1) 
(0) -- (2) 
(2) -- (3);

\node[above] at (0) {\scriptsize \color{gray} \( v_0\)};
\node[below] at (1) {\scriptsize \color{gray} \( v_1\)};
\node[right] at (2) {\scriptsize \color{gray} \( v_2\)};
\node[below] at (3) {\scriptsize \color{gray} \( v_3\)};

\node[left] at (2) {\( 1 \)};
\node at (0,-4) {\( T^{(5)} \)};
\end{tikzpicture}
\qquad
\begin{tikzpicture}[scale=0.7]
\coordinate (0) at (0,0);
\coordinate (1) at (-1,-1);
\coordinate (2) at (0,-1);
\coordinate (3) at (1,-1);

\node[above] at (0) {\scriptsize \color{gray} \( v_0\)};
\node[below] at (1) {\scriptsize \color{gray} \( v_1\)};
\node[below] at (2) {\scriptsize \color{gray} \( v_2\)};
\node[below] at (3) {\scriptsize \color{gray} \( v_3\)};

\foreach \i in {0,1,2,3}
\filldraw (\i) circle (2.5pt);
\draw[ultra thick]
(0) -- (1) 
(0) -- (2) 
(0) -- (3);

\node at (0,-4) {\( T^{(6)} \)};
\end{tikzpicture}

\caption{The weighted ordered trees with \( 4 \) vertices and their preorder.}\label{fig:T3}
\end{figure}
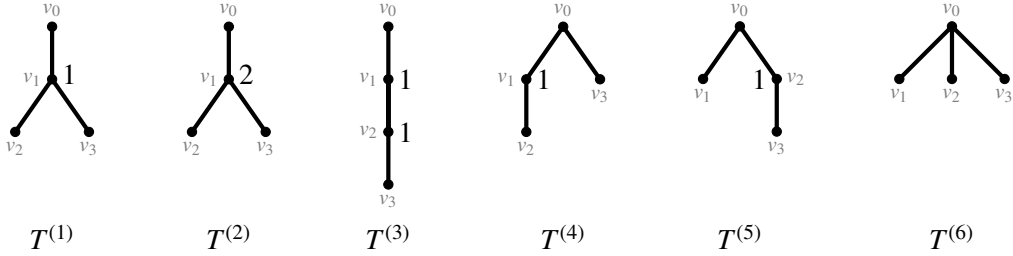


\begin{example}
For the six weighted ordered trees \( T^{(i)} \in \wot_3\) depicted in Figure~\ref{fig:T3}, we have 
\begin{align*}
\phi_{\rm T}(T^{(1)})&=(0,1)(1,0)=Q^{(1)}, &
\phi_{\rm T}(T^{(2)})&=(0,1)(2,1)=Q^{(2)}, &
\phi_{\rm T}(T^{(3)})&=(1,1)(1,1)=Q^{(3)}, \\
\phi_{\rm T}(T^{(4)})&=(0,1)(1,1)=Q^{(4)}, &
\phi_{\rm T}(T^{(5)})&=(1,1)(0,1)=Q^{(5)}, &
\phi_{\rm T}(T^{(6)})&=(0,1)(0,1)=Q^{(6)},
\end{align*}
where each $Q^{(i)}\in\Fp_2$ is given in Figure~\ref{fig:F2}.
One can notice that \( \height(Q^{(i)})=\deg^{+}(r_{T^{(i)}})-1\),
\( \north(Q^{(i)})=\leaf(T^{(i)})-1\), and \( \aone(Q^{(i)})=\one(T^{(i)})\). 
\end{example}

From the construction of the bijection $\phi_{\rm T}$, we have the following results.

\begin{thm}\label{thm:T-F}
Let \( n \) be a nonnegative integer. For each step \( (a,b)\in F \), we have
\[
|\{T \in \mathcal{T}_{n+1} \deli \wt(v_1)=a,~\deg^{+}(v_1)=a-b+1 \}|=|\mathcal{F}_n\{ (a,b) \}|,
\]
where \( v_1 \) is the left-most child of the root of $T$.
Consequently, \( |\mathcal{T}_{n+1}|=|\mathcal{F}_n| \).
\end{thm}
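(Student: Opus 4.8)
The plan is to exploit the fact that $\phi_{\rm T}$ is defined as the composition $\phi_{\rm B}\circ\rho\circ\Phi$ of three bijections---Yan and Lin's bijection $\Phi\colon\mathcal{T}_{n+1}\to\mathcal{D}_{n+1}$ \cite[Theorem~6.1]{YanLin20}, the rotation $\rho$ identifying colored Dyck paths with restricted bicolored Dyck paths, and the bijection $\phi_{\rm B}\colon\mathcal{B}_{n+1}\to\mathcal{F}_n$ of Theorem~\ref{thm:B-F}---so that $\phi_{\rm T}$ is itself a bijection with no further argument needed. It then only remains to check that $\phi_{\rm T}$ respects the natural decompositions of the two sides: for $n\ge 1$ the set $\mathcal{F}_n$ is partitioned into the classes $\mathcal{F}_n\{(a,b)\}$, $(a,b)\in F$, according to the last step, and the claim is that $\phi_{\rm T}$ carries $\{T\in\mathcal{T}_{n+1}\deli\wt(v_1)=a,\ \deg^{+}(v_1)=a-b+1\}$ onto $\mathcal{F}_n\{(a,b)\}$. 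Summing over $(a,b)$ then yields $|\mathcal{T}_{n+1}|=|\mathcal{F}_n|$, the case $n=0$ being the trivial equality $|\mathcal{T}_1|=1=|\mathcal{F}_0|$; together with Theorems~\ref{thm:P-F}, \ref{thm:B-F}, \ref{thm:S-F}, \ref{thm:I-F}, and~\ref{thm:J-F} this also completes the proof of Theorem~\ref{thm:main}.

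For the decomposition claim I would read off the last step of $\phi_{\rm T}(T)$ directly from the explicit preorder formula. Writing $\phi_{\rm T}(T)=(a_1,b_1)\cdots(a_n,b_n)$, the last letter $(a_n,b_n)$ is, via the defining formula with $i=n$, the one attached to the vertex $v_{n-n+2}=v_2$, the vertex visited immediately after $v_1$ in the preorder $v_0(=r_T),v_1,\dots,v_{n+1}$; by the drawing convention $v_1$ is the left-most child of the root. Now split into two cases. If $v_1$ is an interior vertex, the depth-first search visits the left-most child of $v_1$ right after $v_1$, so $v_2$ is that child and the formula gives $(a_n,b_n)=\bigl(\wt(v_1),\,\wt(v_1)-\deg^{+}(v_1)+1\bigr)$; hence $(a_n,b_n)=(a,b)$ is equivalent to $\wt(v_1)=a$ and $\deg^{+}(v_1)=a-b+1$. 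If $v_1$ is a leaf, then since $T$ has at least two edges the root has a further child, the search backtracks, and $v_2$ is the next child of $r_T$; being a child of the root but not its left-most child, $v_2$ is the left-most child of no interior vertex, so $(a_n,b_n)=(0,1)$, in agreement with the conventions $\wt(v_1)=0$ and $\deg^{+}(v_1)=0=a-b+1$.

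Combining the two cases, for every $(a,b)\in F$ we have $(a_n,b_n)=(a,b)$ if and only if $\wt(v_1)=a$ and $\deg^{+}(v_1)=a-b+1$, so $\phi_{\rm T}$ restricts to the asserted bijection between the two sets, which establishes the refined identity and hence $|\mathcal{T}_{n+1}|=|\mathcal{F}_n|$. One could also check along the way, using the same formula, that $\north$ of the image equals $\leaf(T)-1$ and $\aone$ of the image equals $\one(T)$, paralleling Proposition~\ref{prop:B-F}.

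Since all the genuinely substantial work---Yan and Lin's bijection $\Phi$ and the bijection $\phi_{\rm B}$ of Theorem~\ref{thm:B-F}---is already available, I expect the only real effort here to be bookkeeping: confirming that the two descriptions of $\phi_{\rm T}$ in the excerpt agree (so the explicit preorder formula may legitimately be used), and tracking the preorder and right-to-left child conventions consistently so that $v_1$ is genuinely the left-most child of $r_T$ and $v_2$ behaves as described in each case. As an alternative one could bypass $\Phi$ entirely and exhibit a recursive inverse $\psi_{\rm T}\colon\mathcal{F}_n\to\mathcal{T}_{n+1}$ which, given $Q=\hat{Q}(a_n,b_n)$, attaches to the root of a new tree a left-most child $v_1$ of weight $a_n$ and outdegree $a_n-b_n+1$ and builds the remainder from $\psi_{\rm T}(\hat{Q})$, then verify $\psi_{\rm T}\circ\phi_{\rm T}=\id$ exactly as in the proofs of Theorems~\ref{thm:P-F} and~\ref{thm:S-F}; the composition argument is, however, the most economical.
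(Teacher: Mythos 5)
Your proposal is correct and matches the paper's proof: the paper likewise takes bijectivity of $\phi_{\rm T}=\phi_{\rm B}\circ\rho\circ\Phi$ for granted and verifies the refined claim by reading off the last step $(a_n,b_n)$ from the preorder formula at $i=n$, observing that $v_2$ is the left-most child of an interior vertex exactly when $v_1$ is interior (in which case $v_2$ is the left-most child of $v_1$), and otherwise $(a_n,b_n)=(0,1)$. Your two-case analysis of $v_2$ is just a slightly more explicit rendering of the same one-line argument.
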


\begin{proof}
For $T\in\wot_{n+1}$, let \( \phi_{\rm T}(T)= (a_1,b_1)(a_2,b_2)\dots(a_n,b_n)\).
Since $v_2$ is the leftmost child of an interior vertex if and only if $v_2$ is a descendant of $v_1$, we have
\[ 
(a_n,b_n)=\begin{cases}
(\wt(v_1),\wt(v_1)-\deg^{+}(v_1)+1) & \text{if $v_1$ is an interior vertex,}\\
(0,1) & \text{if $v_1$ is a leaf.}
\end{cases}
\]
Hence, it is clear that $\wt(v_1)=a_n$ and $\deg^{+}(v_1) = a_n- b_n +1$.
\end{proof}

\begin{prop}\label{prop:T-F}
For a nonnegative integer \( n \), let \( T\in \mathcal{T}_{n+1} \). If \( \phi_{\rm T}(T)=Q \), then the following hold.
\begin{enumerate}
\item \( \height(Q)=\deg^{+}(r_T)-1 \).
\item \( \north(Q)=\leaf(T)-1 \).
\item \( \aone(Q)=\one(T) \).
\end{enumerate}
\end{prop}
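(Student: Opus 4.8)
The plan is to translate each of the three statistics of $Q=\phi_{\rm T}(T)$ into a count of vertices of $T$ using the vertex-by-vertex description of $\phi_{\rm T}$, and then carry out an elementary enumeration. Throughout, fix $T\in\mathcal{T}_{n+1}$ with $n\ge 1$; the case $n=0$ is trivial, since then $Q=\emptyset$ and all of (1)--(3) read $0=0$. Write the preorder as $v_0=r_T,v_1,\dots,v_{n+1}$ and record the basic dictionary: the $n$ steps of $Q$ are indexed by $i=1,\dots,n$, with step $(a_i,b_i)$ attached to the vertex $v_{n-i+2}$, so the steps are in bijection with the $n$ vertices $v_2,\dots,v_{n+1}$, that is, with all vertices other than the root $v_0$ and its leftmost child $v_1$.

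The structural heart of the argument is the following observation, which I would establish first: for $2\le j\le n+1$, the vertex $v_j$ is ``the leftmost child of an interior vertex'' (the case triggering the first clause in the definition of $\phi_{\rm T}$) precisely when the parent of $v_j$ is $v_{j-1}$, and the map $v\mapsto(\text{leftmost child of }v)$ is a bijection from the set of interior vertices of $T$ onto the set of such $v_j$. Hence the number of steps of $Q$ that are not $(0,1)$ equals the number $c$ of interior vertices, and since $T$ has $n+2$ vertices, one of which is the root and $\leaf(T)$ of which are leaves, $c=n+1-\leaf(T)$. Granting this, part (2) is immediate: $\north(Q)$ is the number of $(0,1)$-steps, which is $n-c=\leaf(T)-1$. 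For part (3), a step with $a_i=1$ must be one of the non-$(0,1)$ steps, and the step attached to the leftmost child of an interior vertex $v$ has $a_i=\wt(v)$; since leaves have weight $0$ and the root is not weighted, $\aone(Q)$ counts the interior vertices of weight $1$, which is exactly $\one(T)$.

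For part (1), I would compute $\height(Q)=\sum_{i=1}^{n}(b_i-a_i)$ by splitting over the two types of steps. Each $(0,1)$-step contributes $1$, for a total of $\leaf(T)-1$, while the step attached to the leftmost child of an interior vertex $v$ contributes $(\wt(v)-\deg^{+}(v)+1)-\wt(v)=1-\deg^{+}(v)$. Summing the latter over the interior vertices gives $c-\sum_{v\text{ interior}}\deg^{+}(v)$; using that the total outdegree of $T$ equals its number of edges $n+1$ and that leaves have outdegree $0$, one gets $\sum_{v\text{ interior}}\deg^{+}(v)=n+1-\deg^{+}(r_T)$. Therefore
\[
\height(Q)=(\leaf(T)-1)+c-\bigl(n+1-\deg^{+}(r_T)\bigr)=(\leaf(T)-1)+(n+1-\leaf(T))-(n+1)+\deg^{+}(r_T)=\deg^{+}(r_T)-1.
\]

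The only step requiring genuine care---and thus the main obstacle---is the structural observation: one must match the preorder bookkeeping (which vertex carries which step, and why ``leftmost child of an interior vertex'' is equivalent to ``parent equals the preorder predecessor'') with the fact that the root, though it possesses a leftmost child, is not counted as interior, and that its leftmost child $v_1$ is precisely the vertex excluded from the step list. Once this is pinned down, the remaining identities are routine summations; they can also be checked directly against the $\mathcal{T}_3$ examples computed just before the statement.
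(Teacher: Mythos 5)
Your proof is correct and follows essentially the same route as the paper's: the height is computed as $\sum_i(b_i-a_i)=n-\sum_{v\,\text{interior}}\deg^{+}(v)=\deg^{+}(r_T)-1$, the $(0,1)$-steps are identified with the non-(leftmost-child-of-interior-vertex) positions to get $\leaf(T)-1$, and the $a_i=1$ steps are matched with interior vertices of weight $1$. You merely make explicit the preorder bookkeeping (that leftmost children of interior vertices are exactly the $v_j$, $j\ge 2$, whose parent is $v_{j-1}$) that the paper leaves implicit.
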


\begin{proof}
First, 
\[
\height(\phi_{\rm T}(T))=\sum_{i\leq n}(b_i-a_i) = n-\left( \sum_{v:\,\text{interior}} \deg^{+}(v) \right)=\deg^{+}(r_T)-1.
\]
The second is easily obtained since for \( (a_i,b_i) \), \( a_i>0 \) if and only if the parent \( v \) of \( v_{n-i+2} \) is an interior vertex with \( \wt(v) \).
The third is clear by the definition of $\phi_{\rm T}$.
\end{proof}



\section{Enumerations with respect to statistics}
\label{sec:enum}

\begin{table}[t]
\begin{center}
\footnotesize{
  \begin{tabular}{c||>{\centering}m{7em}||>{\centering}m{4em}|>{\centering}m{6em}|>{\centering}m{7em}|>{\centering}m{7em}|m{8em}<{\centering}}
       $\Fp_n$ 
       & $({a},{b})=(a_n, b_n)$ 
       &  $(0,1)$ & $({1},1)$ & $({k}+2,1)$ & $({1},-{j})$ & $({k}+2, -{j})$ \\
    \hline\hline
       $\schp_n$ 
       & $\begin{matrix} \text{suffix} \\ \comp(Y)={a}-2,\\ \comp(Z)=-{b} \end{matrix}$
       & $\dots \black{h}$ 
       & $\dots \black{ud}$ 
       & $\begin{matrix}\dots u Y \black{hd}\\ \comp(Y)={k} \end{matrix}$
       & $\begin{matrix}\dots u Z \black{udd} \\  \comp(Z)={j} \end{matrix}$ 
       & $\begin{matrix}\dots u Y u Z \black{hdd} \\ \comp(Y)={k}  \\ \comp(Z)={j}  \end{matrix}$ \\
    \hline
       $\bdp_{n+1}$ 
    & suffix $\dots{{u{\dr}^{1-{b}}{\db}^{{a}}}}\,u\dr^{\ge1}$ 
	& $\dots\black{u}\,u\dr^{\ge1}$ 
	& $\dots u{{\db}^{1}}\,u\dr^{\ge1}$ 
       & $ \dots u{\db}^{{k}+2}\,u\dr^{\ge1}$ 
	& $ \dots u {\dr}^{{j}+1}{\db}^{{1}}\,u\dr^{\ge1}$ 
	& $ \dots u {\dr}^{{j}+1}{\db}^{{k}+2}\,u\dr^{\ge1}$ \\
    \hline
       $ \perm_{n+1}$ 
& 
	   $\block(\omega)$ $= {a}-1 + \delta_{{a},0}$, 
	   $\block(\tau)$  $= 1-{b} + \delta_{{a},1}$
& 
       $z=n$, $z<w$ 
& 
       $z<n$,\quad$z<w$, $\block(\tau)={1}$ 
& 
       $z=n$,\quad$z>w$, $\block(\omega)={k}+1$ 
& 
       $z<n$,\quad$z<w$, $\block(\tau)={j}+2$ 
& 
       $z<n$,\quad$z>w$, $\block(\omega)={k}+1$, $\block(\tau)={j}+1$
\\
    \hline
       $\invi_{n+1}$ 
       & ${a}=r-s$, ${b}=t-u$ \linebreak 
    where $r=\max(e)$, $s=\max(\hat{e})$, $t=\maxid(e)$, $u=\maxid(\hat{e})$  
       & $\begin{matrix}\dots s\, r \dots \\ r-s=0 \end{matrix}$ 
       & $\begin{matrix}\dots s\, r \dots \\ r-s={1} \end{matrix}$  
	& $\begin{matrix}\dots s\, r \dots \\ r-s={k}+2 \end{matrix}$
	& $\begin{matrix}\dots r\overbrace{\dots}^{{j}} s \dots \\ r-s={1}\end{matrix}$
	& $\begin{matrix}\dots r\overbrace{\dots}^{{j}} s \dots \\ r-s={k}+2\end{matrix}$\\ 
    \hline
       $\invj_{n+1}$ 
	& prefix $0^{\ge1}{{1}^{a} {0}^{1-{b}}} \dots$  
	& $0^{\ge1} \dots$  
	& $0^{\ge1}{1^1} \dots$ 
	& $0^{\ge1}{1^{k+2}} \dots$
	& $0^{\ge1}{1^1}{0}^{{j}+1} \dots$  
	& $0^{\ge1}{1}^{{k}+2}{0}^{{j}+1}$ \dots\\ 
    \hline
       ${\wot_{n+1}}$ 
       & ${a}=\omega$, ${b}=\omega-\delta+1$ where $\delta=\deg^+(v_1)$, $\omega=\wt(v_1)$ 
       & $\delta=0$ $\omega=0$ \linebreak ($v_1$: leaf)
       & $\begin{matrix} \delta={1} \\ \omega={1} \end{matrix}$
       & $\begin{matrix} \delta={k}+2 \\ \omega={k}+2  \end{matrix}$
       & $\begin{matrix} \delta={j}+2 \\ \omega={1} \end{matrix}$ 
	 & $\begin{matrix} \delta={k}+{j}+3  \\ \omega={k}+2 \end{matrix}$ \\ 
  \end{tabular}\\[\baselineskip]
}
\end{center}
\caption{Correspondences between $F$-paths and others 
due to the last step of the $F$-paths.}\label{table:tail} 
\end{table}

Theorem~\ref{thm:main} follows from
Theorems \ref{thm:P-F}, \ref{thm:B-F}, \ref{thm:S-F}, \ref{thm:I-F}, \ref{thm:J-F}, and \ref{thm:T-F}. We can also classify each combinatorial object that corresponds to $F$-paths $Q$ by the last step of $Q$. (See Table~\ref{table:tail}.)
In addition, if 
$$Q= \phi_{\rm P}(P)= \phi_{\rm B}(B) =\phi_{\rm S}(\pi)= \phi_{\rm I}(e) = \phi_{\rm J}(\je)= \phi_{\rm T}(T),$$ 
then by Propositions~\ref{prop:B-F},~\ref{prop:J-F},~\ref{prop:T-F} and Remarks~\ref{rem:P-F},~\ref{rem:S-F},~\ref{rem:I-F}, we obtain Table~\ref{table:stat}.
\begin{table}[t]
\begin{center}
  \begin{tabular}{>{\centering}p{4em}||>{\centering}p{8em}|>{\centering}p{5em}|p{9em}<{\centering}}
     Family & 1st stat. & 2nd stat. & 3rd stat. \\
    \hline\hline 
     $Q\in\Fp_n$ & $\height(Q)$ &  $\north(Q)$ & $\aone(Q)$ \\
    \hline
    $Q'\in\Fp_n$ & $\height(Q')$ &  $\north(Q')$ & $\bone(Q')-\north(Q')$ \\
    \hline
     $P\in\schp_n$ & $\comp(P)$ & $\hdd(P)$ & $\peak(P)$ \\
    \hline
     $B\in\bdp_{n+1}$ & $\last(B)-1$ & $\dasc(B)$ & $\bval(B)$ \\
    \hline
     $\pi\in\perm_{n+1}$ & $\block(\pi)-1$ &  $\asc(\pi)$ & $\crit(\pi)-\asc(\pi)-1$ \\
    \hline
     $e\in\invi_{n+1}$ & $\maxid(e)-\max(e)-1$  & $\omi(e)$ & $\cons(e)$  \\
    \hline
     $\je\in\invj_{n+1}$ & $\first(\je)-1$ &  $\omi(\je)$ & $\single(\je)$ \\
    \hline
     $T\in\wot_{n+1}$ & $\deg(r_T)-1$ &  $\leaf(T)-1$ & $\one(T)$ \\
  \end{tabular}\\[\baselineskip]
\end{center}\caption{Equinumerous statistics in $F$-paths family.}\label{table:stat}
\end{table}

In this section, 
we count the number of objects in \emph{$F$-paths family}
with respect to several statistics. 

\begin{lem}\label{lem:joint}
For nonnegative integers $i$, $j$, $k$, $\ell$, $m$, and $n$, 
let $f_n(i,j,k,\ell,m)$ be the number of $F$-paths $Q\in\Fp_{n}$ that satisfy
\begin{itemize}
\item $|\{s\in Q\deli s=(1,1)\}|=i$,
\item $|\{s\in Q\deli s=(1,b),~b\le0\}|=j$,
\item $|\{s\in Q\deli s=(a,1),~a\ge2\}|=k$,
\item $|\{s\in Q\deli s=(0,1)\}|=\north(Q)=\ell$,
\item $\height(Q)=m$.
\end{itemize}
Set $n':=n-(i+j+k+\ell)$. Then $f_n(i,j,k,\ell,m)$ is equal to
\begin{align}
\frac{m+1}{n+1}\binom{n+1}{i,\,j,\,k,\,\ell+1,\,n'}
\binom{\ell-m-1}{2n'+j+k-1}.
\end{align}
\end{lem}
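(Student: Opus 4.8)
The plan is to follow the route announced in the introduction: set up a multivariate generating function for $F$-paths, turn the height decomposition of Section~\ref{sec:F} into a functional equation of Lagrangian type, and then read off $f_n(i,j,k,\ell,m)$ by Lagrange inversion. First I would split $F$ into the five classes matching the five bullets of the Lemma --- $\mathsf A=\{(0,1)\}$, $\mathsf B=\{(1,1)\}$, $\mathsf C=\{(a,1):a\ge2\}$, $\mathsf D=\{(1,b):b\le0\}$, $\mathsf E=\{(a,b):a\ge2,\,b\le0\}$, which partition $F$ --- weight each step of a class $\mathsf X$ by a variable $x_{\mathsf X}$, and weight an $F$-path by the product of its step weights (so that the length $n$ and the number $n'$ of class-$\mathsf E$ steps are read off from a monomial). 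Let $H$ be the generating function for $F$-paths of height $0$. Using the unique decomposition $Q=Q_1(0,1)Q_2\cdots(0,1)Q_{m+1}$ with each $Q_i$ of height $0$, and noting that the $m$ separating $(0,1)$-steps are themselves class-$\mathsf A$ steps of $Q$, the generating function for height-$m$ $F$-paths is $x_{\mathsf A}^{m}H^{m+1}$; setting $z:=x_{\mathsf A}H$ this is $z^{m+1}/x_{\mathsf A}$, and since $\north(Q)$ counts the class-$\mathsf A$ steps we get $f_n(i,j,k,\ell,m)=[x_{\mathsf A}^{\ell+1}x_{\mathsf B}^{i}x_{\mathsf C}^{k}x_{\mathsf D}^{j}x_{\mathsf E}^{n'}]\,z^{m+1}$.

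Next I would decompose a nonempty height-$0$ path $Q=Q's$ by its last step $s=(a,b)$: since $\height(Q')$ must equal the drop $a-b$ of $s$, and $s\in F$ forces $a\ge1$, $b\le1$, one checks that for each $h\ge0$ there are exactly $h+1$ steps of drop $h$ --- the single step $(1,1)$ when $h=0$, and when $h\ge1$ the step $(1,1-h)\in\mathsf D$, the step $(h+1,1)\in\mathsf C$, and the $h-1$ steps $(a,a-h)\in\mathsf E$ with $2\le a\le h$. As the generating function for height-$h$ paths is $x_{\mathsf A}^{h}H^{h+1}=Hz^{h}$, summing over $h$ gives
\[
H=1+H\Bigl(x_{\mathsf B}+(x_{\mathsf C}+x_{\mathsf D})\frac{z}{1-z}+x_{\mathsf E}\frac{z^{2}}{(1-z)^{2}}\Bigr),
\]
which after multiplication by $x_{\mathsf A}$ becomes the Lagrangian equation $z=x_{\mathsf A}\Phi(z)$ with $\Phi(z)=\bigl(1-x_{\mathsf B}-(x_{\mathsf C}+x_{\mathsf D})\frac{z}{1-z}-x_{\mathsf E}\frac{z^{2}}{(1-z)^{2}}\bigr)^{-1}$, a well-defined power series in $z$ since $\Phi(0)=(1-x_{\mathsf B})^{-1}$ is a unit.

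Then Lagrange inversion yields $[x_{\mathsf A}^{\ell+1}]z^{m+1}=\frac{m+1}{\ell+1}[z^{\ell-m}]\Phi(z)^{\ell+1}$ (valid since $m\le\north(Q)=\ell$). Writing $w=z/(1-z)$ and expanding by the binomial theorem, $\Phi(z)^{\ell+1}=\sum_{r\ge0}\binom{r+\ell}{r}(1-x_{\mathsf B})^{-(\ell+1+r)}\bigl((x_{\mathsf C}+x_{\mathsf D})w+x_{\mathsf E}w^{2}\bigr)^{r}$; extracting $x_{\mathsf C}^{k}x_{\mathsf D}^{j}x_{\mathsf E}^{n'}$ forces $r=k+j+n'$ and produces $\binom{k+j+n'}{n'}\binom{k+j}{k}w^{k+j+2n'}$, extracting $x_{\mathsf B}^{i}$ produces $\binom{i+\ell+k+j+n'}{i}$, and the product of these four binomials collapses to the multinomial $\binom{n}{i,j,k,\ell,n'}$; finally $[z^{\ell-m}]w^{k+j+2n'}=\binom{\ell-m-1}{2n'+j+k-1}$. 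Hence $f_n(i,j,k,\ell,m)=\frac{m+1}{\ell+1}\binom{n}{i,j,k,\ell,n'}\binom{\ell-m-1}{2n'+j+k-1}$, which coincides with the stated formula by the identity $\frac{1}{\ell+1}\binom{n}{i,j,k,\ell,n'}=\frac{1}{n+1}\binom{n+1}{i,j,k,\ell+1,n'}$.

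The hard part will be getting the functional equation exactly right: one must be careful that the height decomposition interacts with the weighting so that the $m$ separating $(0,1)$-steps are already counted among the class-$\mathsf A$ steps (giving the clean factor $x_{\mathsf A}^{m}H^{m+1}$ rather than $x_{\mathsf A}^{m}\widetilde H^{\,m+1}$ for some restricted generating function), and that the count of steps by drop --- in particular the degenerate case $h=0$, where the only step is $(1,1)$ and there are no class-$\mathsf C$, $\mathsf D$, or $\mathsf E$ steps --- is handled correctly. Once $z=x_{\mathsf A}\Phi(z)$ is established, everything else is a routine application of Lagrange inversion and the generalized binomial theorem, and checking it against the six length-$2$ $F$-paths of Figure~\ref{fig:F2} (with the convention $\binom{-1}{-1}=1$) is a useful sanity check.
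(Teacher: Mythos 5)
Your proof is correct and follows essentially the same route as the paper: the same height decomposition (generating function $x_{\mathsf A}^{m}H^{m+1}$ for height-$m$ paths), the same last-step analysis of height-$0$ paths with exactly $h+1$ steps of drop $h$ split as $1+1+(h-1)$ across the classes $\mathsf D,\mathsf C,\mathsf E$, and Lagrange inversion; indeed your equation $z=x_{\mathsf A}\Phi(z)$ is the paper's $G_0=x\bigl(u+rG_0+(s+t)\frac{G_0^2}{1-G_0}+\frac{G_0^3}{(1-G_0)^2}\bigr)$ rewritten, under the dictionary $x_{\mathsf A}=xu$, $x_{\mathsf B}=xr$, $x_{\mathsf C}=xt$, $x_{\mathsf D}=xs$, $x_{\mathsf E}=x$. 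The one genuine difference is which variable you treat as Lagrangian: the paper inverts in the length variable $x$ and extracts $r^{i}s^{j}t^{k}u^{\ell+1}$ from $\phi(z)^{n+1}$ by the multinomial theorem, obtaining $\frac{m+1}{n+1}\binom{n+1}{i,j,k,\ell+1,n'}$ directly, whereas you invert in $x_{\mathsf A}$ and expand $(1-\Psi)^{-(\ell+1)}$ by negative binomial series, obtaining $\frac{m+1}{\ell+1}\binom{n}{i,j,k,\ell,n'}$ and converting at the end. Both executions are routine once the functional equation is in hand, and all of your coefficient extractions (the collapse of the four binomials to the multinomial, and $[z^{\ell-m}]w^{k+j+2n'}=\binom{\ell-m-1}{k+j+2n'-1}$) check out.
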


\begin{proof}

For nonnegative integer $m$, let 
\[
F_m=F_m(x, r, s, t, u)\coloneqq\sum_{n\ge0}f_n(i,j,k,\ell,m) r^i s^j t^k u^{\ell}   x^{n}.
\] 
By decomposition of $F$-paths in \( \bigcup_{n\geq 0}\mathcal{F}_{n} \), we have 
$$F_m=(ux)^m F_0^{m+1}.$$
Given an $F$-path $Q$ of height $m$, $Q(a,b)$ is an $F$-path of height $0$ if and only if $a-b=m$ and $1\le a \le m+1$.
So we get
\begin{equation*}
F_0=1+rxF_0+\sum_{m\ge 1}\left(s+t+m-1\right)x\, F_m.
\end{equation*}
Removing $F_m$ yields that
\begin{align*}
F_0&=1+rxF_0+(s+t)x\sum_{m\ge 1} (ux)^m F_0^{m+1}+x\sum_{m\ge 1}(m-1) (ux)^m F_0^{m+1}.
\end{align*}
By setting $G_m:=uxF_m$, we get 
\begin{align}\label{eq:Tdecomp-lem}
G_m&=G_0^{m+1}, \\
G_0&=ux+rxG_0+(s+t)x\sum_{m\ge 1} G_0^{m+1}+x\sum_{m\ge 1}(m-1) G_0^{m+1}. \notag
\end{align}
Thus, we have
\begin{equation}
{G_0}=x\left(u+rG_0+(s+t)\frac{G_0^2}{1-G_0}+\frac{G_0^3}{(1-G_0)^2} \right).\label{eq:TArstu0feq}
\end{equation}
Applying Lagrange inversion formula~\cite[eq. (2.2.1)]{Ges16} to \eqref{eq:TArstu0feq} with \eqref{eq:Tdecomp-lem}, we obtain
\[
\left[x^{n+1}\right]{G_m}(x,r, s,t,u)=\dfrac{m+1}{n+1}\left[z^{n-m}\right]
\left(u+rz + s\dfrac{z^2}{1-z} + t\dfrac{z^2}{1-z} + \dfrac{z^3}{(1-z)^2} \right)^{n+1}.
\]
Since 
\begin{align*}
f_n(i,j,k,\ell,m)
&=\left[r^i s^j t^k u^{\ell} x^n \right]\left({F_m}(x, r, s,t,u)\right),\\
&=\left[r^i s^j t^k u^{\ell+1} \right]\left(\left[x^{n+1}\right]{G_m}(x, r, s,t,u)\right),
\end{align*}
we get
\begin{align*}
f_n(i,j,k,\ell,m)
&=\left[r^i s^j t^k u^{\ell+1} \right]\frac{m+1}{n+1}\left[z^{n-m}\right] 
\left(zr + \dfrac{z^2}{1-z}s + \dfrac{z^2}{1-z}t + u + \dfrac{z^3}{(1-z)^2} \right)^{n+1} \\
&=\frac{m+1}{n+1}\left[z^{n-m}\right] \binom{n+1}{i,\,j,\,k,\,\ell+1,\,n-i-j-k-\ell}
\, z^i\left(\dfrac{z^2}{1-z}\right)^j\left(\dfrac{z^2}{1-z}\right)^k\left(\dfrac{z^3}{(1-z)^2}\right)^{n-i-j-k-\ell}\\
&=\frac{m+1}{n+1}\binom{n+1}{i,\,j,\,k,\,\ell+1,\,n-i-j-k-\ell}
\left[z^{3\ell+2i+j+k-m-2n}\right]\frac{1}{(1-z)^{2n-2\ell-2i-j-k}} \\
&=\frac{m+1}{n+1}\binom{n+1}{i,\,j,\,k,\,\ell+1,\,n-i-j-k-\ell}
\binom{\ell-m-1}{2n-2\ell-2i-j-k-1},
\end{align*}
which completes the proof.
\end{proof}

\begin{thm}\label{prop:joint} 

For nonnegative integers $m$, $\ell$, $h$, and $n$, 
the following sets are equinumerous.
\begin{enumerate}
\item The set of \( F \)-paths \( Q \in \Fp_n \) with
$$\height(Q)=m,\quad \north(Q)=\ell,\quad \aone(Q)=h.$$
\item The set of Schr\"{o}der paths \( P \in \schp_n \) with 
$$\comp(P)=m, \quad \hdd(P)=\ell,\quad \peak(P)=h.$$
\item The set of restricted bicolored Dyck paths \( B \in \bdp_{n+1} \) with 
$$\last(B)=m+1, \quad \dasc(B)=\ell, \quad \bval(B)=h.$$
\item The set of permutations \( \pi \in \perm_{n+1} \) with 
$$\block(\pi)=m+1, \quad \asc(\pi)=\ell,\quad \crit(\pi)-\asc(\pi)=h+1.$$
\item The set of inversion sequences \( e \in \invi_{n+1} \) with
$$\maxid(e)-\max(e)=m+1, \quad \omi(e)=\ell, \quad \cons(e)=h.$$
\item The set of inversion sequences \( \je \in \invj_{n+1} \) with 
$$\first(\je)=m+1, \quad \omi(\je)=\ell, \quad \single(\je)=h.$$
\item The set of weighted ordered trees \( T\in \wot_{n+1} \) with 
$$\deg^{+}(r_T)=m+1, \quad \leaf(T)=\ell+1,\quad \one(T)=h.$$
\end{enumerate}
The number $a_n(h,\ell,m)$ of elements in each set is 
\[ 
\frac{m+1}{n+1} \binom{n+1}{\ell+1} \binom{n-\ell}{h} \binom{n-m-1}{2n-2\ell-h-1}.
\]
\end{thm}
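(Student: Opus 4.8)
The plan is to split the argument into two parts: first establish that the seven sets are equinumerous, then evaluate their common size $a_n(h,\ell,m)$. For the first part I would simply invoke the bijections already constructed. The maps $\phi_{\rm P},\phi_{\rm B},\phi_{\rm S},\phi_{\rm I},\phi_{\rm J},\phi_{\rm T}$ of Theorems~\ref{thm:P-F},~\ref{thm:B-F},~\ref{thm:S-F},~\ref{thm:I-F},~\ref{thm:J-F},~\ref{thm:T-F} are bijections onto $\Fp_n$ from $\schp_n$, $\bdp_{n+1}$, $\perm_{n+1}$, $\invi_{n+1}$, $\invj_{n+1}$, $\wot_{n+1}$, and by Remarks~\ref{rem:P-F},~\ref{rem:S-F},~\ref{rem:I-F} and Propositions~\ref{prop:B-F},~\ref{prop:J-F},~\ref{prop:T-F} (all collected in Table~\ref{table:stat}) each of them matches the three distinguished statistics of the object with $\height$, $\north$, $\aone$ of the corresponding $F$-path up to the constant shifts shown there (for permutations, say, $\block(\pi)-1=\height(Q)$, $\asc(\pi)=\north(Q)$, and $\crit(\pi)-\asc(\pi)-1=\aone(Q)$ when $Q=\phi_{\rm S}(\pi)$). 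Restricting each bijection to the prescribed statistic values identifies the sets in (2)--(7) with the set in (1), so it only remains to count
\[
a_n(h,\ell,m)=\bigl|\{\,Q\in\Fp_n:\ \height(Q)=m,\ \north(Q)=\ell,\ \aone(Q)=h\,\}\bigr|.
\]

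For the second part I would read this off Lemma~\ref{lem:joint}. A step $(a,b)$ of an $F$-path has $a=1$ precisely when it is a $(1,1)$-step or a $(1,b)$-step with $b\le 0$, so in the notation $f_n(i,j,k,\ell,m)$ the condition $\aone(Q)=h$ becomes $i+j=h$ with $k$ unrestricted; hence, writing $n':=n-(h+k+\ell)$,
\[
a_n(h,\ell,m)=\sum_{\substack{i,j,k\ge 0\\ i+j=h}} f_n(i,j,k,\ell,m).
\]
Substituting the closed form of Lemma~\ref{lem:joint} and splitting the multinomial coefficient as $\binom{n+1}{i,j,k,\ell+1,n'}=\binom{h}{i}\binom{n+1}{h,k,\ell+1,n'}$ with $j=h-i$, the inner sum over $i$ has the Vandermonde shape $\sum_i\binom{h}{i}\binom{\ell-m-1}{2n-h-k-2\ell-1-i}$ and collapses to $\binom{h+\ell-m-1}{2n-h-k-2\ell-1}$; then, splitting $\binom{n+1}{h,k,\ell+1,n'}=\binom{n+1}{\ell+1}\binom{n-\ell}{h}\binom{n-\ell-h}{k}$, the remaining sum over $k$ is again Vandermonde, $\sum_k\binom{n-\ell-h}{k}\binom{h+\ell-m-1}{2n-h-2\ell-1-k}=\binom{n-m-1}{2n-2\ell-h-1}$, which yields exactly
\[
a_n(h,\ell,m)=\frac{m+1}{n+1}\binom{n+1}{\ell+1}\binom{n-\ell}{h}\binom{n-m-1}{2n-2\ell-h-1}.
\]

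I do not expect a genuine conceptual obstacle here: once Lemma~\ref{lem:joint} is available the work is two Vandermonde convolutions, and the only care needed is bookkeeping — the possibly negative upper indices of the binomials, and the distinct constant shifts between the seven families. As a more transparent alternative for the second part I would instead rerun the generating-function computation of Lemma~\ref{lem:joint} with a single variable $p$ marking $\aone$ and a single variable $u$ marking $\north$: specializing the series $G_0$ of~\eqref{eq:TArstu0feq} by $r=s=p$, $t=1$ and using
\[
pG_0+(p+1)\frac{G_0^2}{1-G_0}+\frac{G_0^3}{(1-G_0)^2}=\frac{pG_0}{1-G_0}+\frac{G_0^2}{(1-G_0)^2}
\]
reduces the functional equation to $G_0=x\bigl(u+\tfrac{pG_0}{1-G_0}+\tfrac{G_0^2}{(1-G_0)^2}\bigr)$; Lagrange inversion with $G_m=G_0^{m+1}$ then gives $[x^{n+1}]G_m=\tfrac{m+1}{n+1}[z^{n-m}]\bigl(u+\tfrac{pz}{1-z}+\tfrac{z^2}{(1-z)^2}\bigr)^{n+1}$, and a single multinomial expansion followed by extraction of $[p^h u^{\ell+1} z^{n-m}]$ reproduces the closed form above. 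Here the main thing to watch is that $G_m=ux F_m$, so one extracts $[p^h u^{\ell+1} x^{n+1}]$ rather than $[p^h u^{\ell} x^{n}]$.
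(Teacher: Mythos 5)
Your proposal is correct and follows essentially the same route as the paper: the equinumerosity comes from the already-established bijections and their statistic correspondences, and the count is obtained by summing the formula of Lemma~\ref{lem:joint} over $i+j=h$ and $k\ge 0$ via two Chu--Vandermonde convolutions (you perform them in the opposite order, summing over $i$ before $k$, which is immaterial). Your alternative of specializing $r=s=p$, $t=1$ in the generating function and redoing the Lagrange inversion is a valid and slightly cleaner check, but it is not needed.
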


\begin{proof} From Lemma~\ref{lem:joint}, we have
$$a_n(h,\ell,m)=\sum_{i+j=h}\sum_{k\ge0} f_n(i,j,k,\ell,m).$$
Applying Chu-Vandemonde identities, we obtain
\begin{align*}
\sum_{i=0}^h\sum_{k\ge0} f_n(i,h-i,k,\ell,m)
&= \frac{m+1}{n+1}\binom{n+1}{\ell+1}
\sum_{i=0}^h\binom{n-\ell}{i}\binom{n-\ell-i}{h-i}
\sum_{k\ge0}\binom{n-\ell-h}{k}
\binom{\ell-m-1}{2n-2\ell-h-i-k-1}\\
&=\frac{m+1}{n+1}\binom{n+1}{\ell+1}\sum_{i=0}^h\binom{n-\ell}{i}\binom{n-\ell-i}{h-i}
\binom{n-m-h-1}{2n-2\ell-h-i-1}\\
&=\frac{m+1}{n+1}\binom{n+1}{\ell+1}\binom{n-\ell}{h}\sum_{i=0}^h\binom{h}{i}
\binom{n-m-h-1}{2n-2\ell-h-i-1}\\
&=\frac{m+1}{n+1}\binom{n+1}{\ell+1}\binom{n-\ell}{h}\binom{n-m-1}{2n-2\ell-h-1},
\end{align*}
which completes the proof.
\end{proof}

Given a sequence $\{a(i)\}_{i\ge0}$, let $a(\ast)\coloneqq\sum_{i\ge0}a(i)$. 
Applying Chu-Vandemonde identities to Theorem~\ref{prop:joint}, we have the following result.
\begin{cor}\label{cor:sums}
For nonnegative integers \( h \), \( \ell \), \( m \), and \( n \), we have
\begin{align*}
a_n(h,\ell,\ast)&=\dfrac{1}{n+1}\binom{n+1}{\ell+1}\binom{n-\ell}{h}\binom{n+1}{2\ell+h-n}, \\
a_n(h,\ast,m)&=\frac{m+1}{n+1}\binom{n+1}{h}\sum_{i\ge0}\binom{n-h+1}{i+1}\binom{n-m-1}{2n-h-2i-1},  \\
a_n(\ast,\ell,m)&=\dfrac{m+1}{n+1}\binom{n+1}{\ell+1}\binom{2n-\ell-m-1}{2n-2\ell-1}, \\
a_n(h,\ast,\ast)&=\dfrac{1}{n+1}\binom{n+1}{h}\sum_{i\ge0}\binom{n-h+1}{i}\binom{n+1}{2i+h+1},\\ 
a_n(\ast,\ell,\ast)&=\dfrac{1}{n+1}\binom{n+1}{\ell+1}\binom{2n-\ell+1}{\ell},\\ 
a_n(\ast,\ast,m)&=\frac{m+1}{n+1}\sum_{i\ge0}\binom{n+1}{i}\binom{n-m+i-1}{2i-1},\\ 
a_n(\ast,\ast,\ast)&=\dfrac{1}{n+1}\sum_{i\ge0}\binom{n+1}{i+1}\binom{2n-i+1}{i}. 
\end{align*}
\end{cor}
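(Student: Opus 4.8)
The plan is to start from the closed form
$$a_n(h,\ell,m)=\frac{m+1}{n+1}\binom{n+1}{\ell+1}\binom{n-\ell}{h}\binom{n-m-1}{2n-2\ell-h-1}$$
provided by Theorem~\ref{prop:joint}, and to evaluate each of the seven sums by a short sequence of elementary binomial manipulations: the Chu--Vandermonde convolution, the weighted hockey-stick identity
$$\sum_{m\ge0}(m+1)\binom{p-m}{q}=\binom{p+2}{q+2},$$
the trinomial revision $\binom{n+1}{\ell+1}\binom{n-\ell}{h}=\binom{n+1}{h}\binom{n-h+1}{\ell+1}$, and the reflection $\binom{N}{k}=\binom{N}{N-k}$. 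Throughout, a binomial coefficient with negative lower entry is read as $0$, so every sum may be extended to all $i\ge0$ without changing its value, and the boundary terms where an identity would otherwise fail simply vanish.

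First I would carry out the sum over $h$. With $\ell$ and $m$ held fixed, the only $h$-dependent factors form $\binom{n-\ell}{h}\binom{n-m-1}{(2n-2\ell-1)-h}$, so Chu--Vandermonde collapses this to $\binom{2n-\ell-m-1}{2n-2\ell-1}$; pulling out $\tfrac{m+1}{n+1}\binom{n+1}{\ell+1}$ gives the stated expression for $a_n(\ast,\ell,m)$. Next I would handle the sum over $m$: in both $a_n(h,\ell,m)$ and the just-obtained $a_n(\ast,\ell,m)$ the $m$-dependence has precisely the shape $(m+1)\binom{p-m}{q}$ (with $p=n-1$, $q=2n-2\ell-h-1$ in the first case and $p=2n-\ell-1$, $q=2n-2\ell-1$ in the second), so the weighted hockey-stick identity applies, and one application of $\binom{N}{k}=\binom{N}{N-k}$ rewrites $\binom{n+1}{2n-2\ell-h+1}$ as $\binom{n+1}{2\ell+h-n}$, etc. This produces $a_n(h,\ell,\ast)$ and $a_n(\ast,\ell,\ast)$ in closed product form, and summing the latter once more over $\ell$ (merely relabelling $\ell\to i$) gives $a_n(\ast,\ast,\ast)$.

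The three remaining quantities $a_n(h,\ast,m)$, $a_n(h,\ast,\ast)$, $a_n(\ast,\ast,m)$ all require a summation over $\ell$. Here I would first use the trinomial revision to turn $\binom{n+1}{\ell+1}\binom{n-\ell}{h}$ into $\binom{n+1}{h}\binom{n-h+1}{\ell+1}$, factoring $\binom{n+1}{h}$ out of the sum; after the shift $i=\ell+1$ (and, for $a_n(h,\ast,\ast)$ and $a_n(\ast,\ast,m)$, one more reflection to bring the inner binomial to the advertised form) the sums take exactly the forms listed in the statement. The point I expect to be the real obstacle --- more a bookkeeping nuisance than a difficulty --- is precisely that the term $2\ell$ in the upper entry $2n-2\ell-h-1$ blocks Chu--Vandermonde in the $\ell$-direction, so, unlike the $h$- and $m$-sums, these three cannot be collapsed to a single product and must be reported as single sums; the care required is in fixing the order of summation (always $h$, then $m$, then $\ell$) and in confirming that the negative-entry terms vanish so that each identity is legitimately applicable at the point it is used.
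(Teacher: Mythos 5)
Your proposal is correct and takes essentially the same route as the paper, which proves the corollary simply by "applying Chu--Vandermonde identities" to the closed form of Theorem~\ref{prop:joint}; your weighted hockey-stick identity for the $m$-sum is itself an instance of the Vandermonde convolution $\sum_k\binom{k}{1}\binom{N-k}{q}=\binom{N+1}{q+2}$, and your observation that only the $\ell$-sums fail to collapse matches exactly which of the seven formulas remain as single sums. The reindexings and reflections you describe all check out, so this is a faithful (indeed more detailed) version of the paper's argument.
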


Here are information of OEIS entries on Corollary~\ref{cor:sums}:
\begin{itemize}
\item $a_n(\ast,\ast,\ast)$: A106228 
\item $a_n(\ast,\ast,0)$: A109081
\item $a_n(\ast,0,\ast)$: A000012 (constant $1$)
\item $a_n(0,\ast,\ast)$: A036765
\item $a_n(\ast,\ast,m)$: A185967
\end{itemize}

Table~\ref{table:a} lists some values of 
$a_n(h,\ast,\ast)$ and $a_n(\ast,\ell,\ast)$, 
which are not available in OEIS.

\begin{table}[t]
\centering
\begin{tabular}{c|cccccccccccccccc}
\noalign{\smallskip}\noalign{\smallskip}
\(n \backslash h\)&& 0 && 1 && 2 && 3 && 4 && 5  \\
\hline
0 && 1 && 0 && 0 && 0 && 0  && 0 \\
1 && 1 && 1 && 0 && 0 && 0 && 0 \\
2 && 2 && 3 && 1 && 0 && 0  && 0 \\
3 && 5 && 9 && 6 && 1 && 0 && 0\\
4 && 13 && 30 && 26 && 10 && 1 && 0 \\
5 && 36 && 100 && 110 && 60 && 15 && 1 \\
\end{tabular}
\qquad 
\begin{tabular}{c|cccccccccccccccc}
\noalign{\smallskip}\noalign{\smallskip}
\(n \backslash \ell\)&& 0 && 1 && 2 && 3 && 4 && 5  \\
\hline
0 && 1 && 0 && 0 && 0 && 0  && 0 \\
1 && 1 && 1 && 0 && 0 && 0 && 0 \\
2 && 1 && 4 && 1 && 0 && 0  && 0 \\
3 && 1 && 9 && 10 && 1 && 0 && 0\\
4 && 1 && 16 && 42 && 20 && 1 && 0 \\
5 && 1 && 25 && 120 && 140 && 35 && 1
\end{tabular}\\[\baselineskip]
\caption{The numbers {$a_n(h,\ast,\ast)$ (left) and $a_n(\ast,\ell,\ast)$ (right).}}\label{table:a}
\end{table}
\begin{rem}
By Corollary~\ref{cor:sums}, all the sets mentioned in Theorem~\ref{thm:main} are of size 
$$
a_n(\ast,\ast,\ast)=\dfrac{1}{n+1}\sum_{i\ge0}\binom{n+1}{i+1}\binom{2n-i+1}{i}.
$$
\end{rem}



We close this section with an example of integrating all the bijections we constructed. 
Let us denote \( \psi_{\rm A} \coloneqq \phi_{\rm A}^{-1} \) for each \( {\rm A } \in \set{\rm P,B,S,I,J,T} \). 

\begin{example}\label{ex:recover}
Let \( Q=s_1s_2\dots s_{15} \) be the \( F \)-path with the steps, \( s_7=(3,-1), ~s_{11}=(1,1), ~s_{12}=(2,1), ~s_{15}=(1,-1)\), and \( s_i=(0,1) \) otherwise. We have
\begin{align*} 
\psi_{\rm P}(Q)&= h^2u\,h\,u\,h^2 d^2 h^2 u^2 d\, h \, d \, u \, h \, u \,d^2,\\ 
\psi_{\rm B}(Q)&=u^7 \dr^2 \db^3 u^4\db \,u\, \db^2 u^3 \dr^2 \db\, u\, \dr^5,\\
\psi_{\rm S}(Q)&={1~2~5~8~3~4~6~7~9~16~12~13~11~10~14~15},\\
\psi_{\rm I}(Q)&=(0,0,0,0,0,3,3,3,3,4,6,7,6,6,0,0),\\ 
\psi_{\rm J}(Q)&=0\,0\,0\,0\,0\,1\,0\,0\,4\,4\,5\,9\,9\,9\,0\,0,
\end{align*}
and \( \psi_{\rm T}(Q)=T \) is depicted in Figure~\ref{fig:reverseT}. 
Figures~\ref{fig:reverseT} and \ref{fig:reverse} show the process for each bijections \( \psi_{\rm T}\), \( \psi_{\rm P}\), \(\psi_{\rm S}\), and \( \psi_{\rm I} \). Here, 
\( Q_{(i)}\coloneqq s_1 s_2 \dots s_i \) for each \( i \in [15] \).
\end{example}


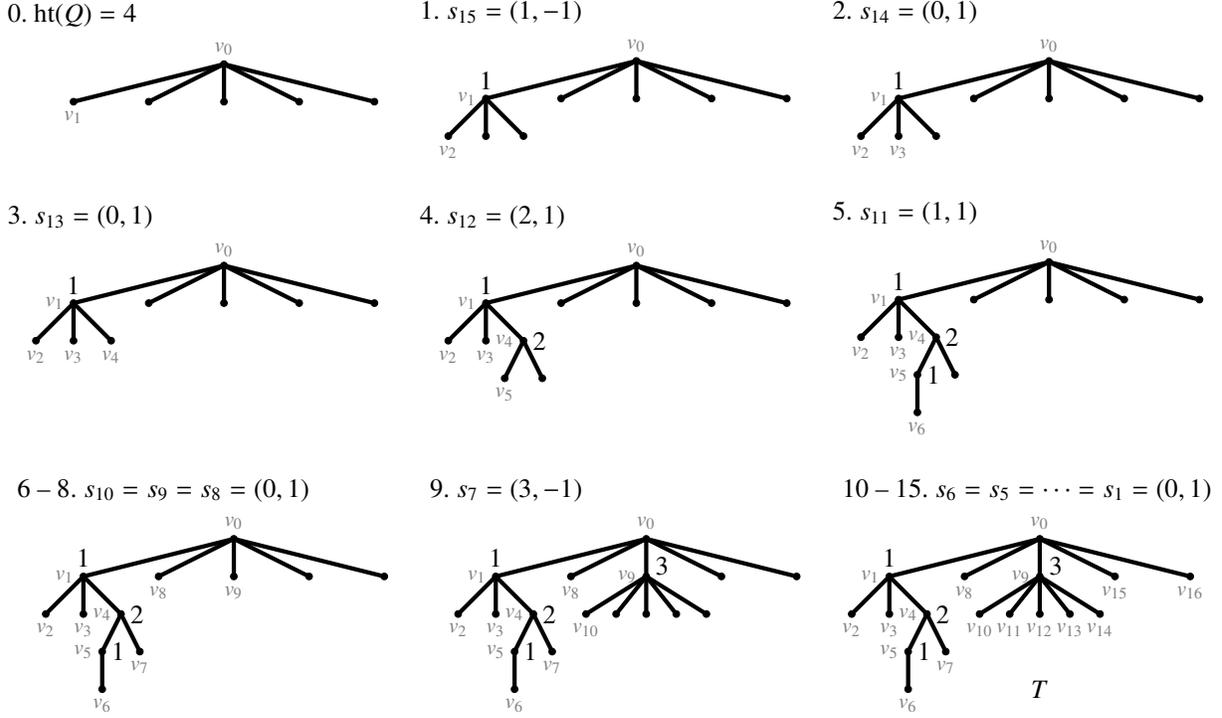
\begin{figure}[t]
  \centering
  {\small
  \begin{tikzpicture}[scale=0.5]
  \coordinate (0) at (0,0);
  \coordinate (1) at (-4,-1);
  \coordinate (2) at (-5,-2);
  \coordinate (3) at (-4,-2);
  \coordinate (4) at (-3,-2);
  \coordinate (5) at (-3.5,-3);
  \coordinate (6) at (-3.5,-4);
  \coordinate (7) at (-2.5,-3);
  \coordinate (8) at (-2,-1);
  \coordinate (9) at (0,-1);
  \coordinate (10) at (-1.6,-2);
  \coordinate (11) at (-0.8,-2);
  \coordinate (12) at (0,-2);
  \coordinate (13) at (0.8,-2);
  \coordinate (14) at (1.6,-2);
  \coordinate (15) at (2,-1);
  \coordinate (16) at (4,-1);
  
  \foreach \i in {0,1,8,9,15,16}
  \filldraw (\i) circle (2.5pt);
  
  \node[above] at (0) {\scriptsize \color{gray} \( v_{0} \)};
  
  \foreach \i in {1}
  \node[below] at (\i) {\scriptsize \color{gray} \( v_{\i} \)};
  
  \foreach \i in {1,8,9,15,16}
  \draw[ultra thick]
  (0) -- (\i);
  
  \node[right] at (-6,1.3) {0. \(\height(Q)=4\)};
  \node at (0,-2.5) {};
  
  \end{tikzpicture}
  \quad 
  \begin{tikzpicture}[scale=0.5]
  \coordinate (0) at (0,0);
  \coordinate (1) at (-4,-1);
  \coordinate (2) at (-5,-2);
  \coordinate (3) at (-4,-2);
  \coordinate (4) at (-3,-2);
  \coordinate (5) at (-3.5,-3);
  \coordinate (6) at (-3.5,-4);
  \coordinate (7) at (-2.5,-3);
  \coordinate (8) at (-2,-1);
  \coordinate (9) at (0,-1);
  \coordinate (10) at (-1.6,-2);
  \coordinate (11) at (-0.8,-2);
  \coordinate (12) at (0,-2);
  \coordinate (13) at (0.8,-2);
  \coordinate (14) at (1.6,-2);
  \coordinate (15) at (2,-1);
  \coordinate (16) at (4,-1);
  
  \foreach \i in {0,1,2,3,4,8,9,15,16}
  \filldraw (\i) circle (2.5pt);
  
  \node[above] at (0) {\scriptsize \color{gray} \( v_{0} \)};
  
  \foreach \i in {1}
  \node[left] at (\i) {\scriptsize \color{gray} \( v_{\i} \)};
  
  \foreach \i in {2}
  \node[below] at (\i) {\scriptsize \color{gray} \( v_{\i} \)};
  
  \foreach \i in {1,8,9,15,16}
  \draw[ultra thick]
  (0) -- (\i);
  
  \foreach \i in {2,3,4}
  \draw[ultra thick]
  (1) -- (\i);
  
  \node[above] at (1) {\( 1 \)};
  
  \node[right] at (-6,1.3) {1. \( s_{15}=(1,-1) \)};
  \node at (0,-2.5) {};
  
  \end{tikzpicture}
  \quad 
  \begin{tikzpicture}[scale=0.5]
  \coordinate (0) at (0,0);
  \coordinate (1) at (-4,-1);
  \coordinate (2) at (-5,-2);
  \coordinate (3) at (-4,-2);
  \coordinate (4) at (-3,-2);
  \coordinate (5) at (-3.5,-3);
  \coordinate (6) at (-3.5,-4);
  \coordinate (7) at (-2.5,-3);
  \coordinate (8) at (-2,-1);
  \coordinate (9) at (0,-1);
  \coordinate (10) at (-1.6,-2);
  \coordinate (11) at (-0.8,-2);
  \coordinate (12) at (0,-2);
  \coordinate (13) at (0.8,-2);
  \coordinate (14) at (1.6,-2);
  \coordinate (15) at (2,-1);
  \coordinate (16) at (4,-1);
  
  \foreach \i in {0,1,2,3,4,8,9,15,16}
  \filldraw (\i) circle (2.5pt);
  
  \node[above] at (0) {\scriptsize \color{gray} \( v_{0} \)};
  
  \foreach \i in {1}
  \node[left] at (\i) {\scriptsize \color{gray} \( v_{\i} \)};
  
  \foreach \i in {2,3}
  \node[below] at (\i) {\scriptsize \color{gray} \( v_{\i} \)};
  
  \foreach \i in {1,8,9,15,16}
  \draw[ultra thick]
  (0) -- (\i);
  
  \foreach \i in {2,3,4}
  \draw[ultra thick]
  (1) -- (\i);
  
  \node[above] at (1) {\( 1 \)};
  
  \node[right] at (-6,1.3) {2. \( s_{14}=(0,1) \)};
  \node at (0,-2.5) {};
  
  \end{tikzpicture}\\[2ex]
  
  \begin{tikzpicture}[scale=0.5]
  \coordinate (0) at (0,0);
  \coordinate (1) at (-4,-1);
  \coordinate (2) at (-5,-2);
  \coordinate (3) at (-4,-2);
  \coordinate (4) at (-3,-2);
  \coordinate (5) at (-3.5,-3);
  \coordinate (6) at (-3.5,-4);
  \coordinate (7) at (-2.5,-3);
  \coordinate (8) at (-2,-1);
  \coordinate (9) at (0,-1);
  \coordinate (10) at (-1.6,-2);
  \coordinate (11) at (-0.8,-2);
  \coordinate (12) at (0,-2);
  \coordinate (13) at (0.8,-2);
  \coordinate (14) at (1.6,-2);
  \coordinate (15) at (2,-1);
  \coordinate (16) at (4,-1);
  
  \foreach \i in {0,1,2,3,4,8,9,15,16}
  \filldraw (\i) circle (2.5pt);
  
  \node[above] at (0) {\scriptsize \color{gray} \( v_{0} \)};
  
  \foreach \i in {1}
  \node[left] at (\i) {\scriptsize \color{gray} \( v_{\i} \)};
  
  \foreach \i in {2,3,4}
  \node[below] at (\i) {\scriptsize \color{gray} \( v_{\i} \)};
  
  \foreach \i in {1,8,9,15,16}
  \draw[ultra thick]
  (0) -- (\i);
  
  \foreach \i in {2,3,4}
  \draw[ultra thick]
  (1) -- (\i);
  
  \node[above] at (1) {\( 1 \)};
  
  \node[right] at (-6,1.3) {3. \( s_{13}=(0,1) \)};
  \node at (0,-4.5) {};
  
  \end{tikzpicture}
  \quad 
  \begin{tikzpicture}[scale=0.5]
  \coordinate (0) at (0,0);
  \coordinate (1) at (-4,-1);
  \coordinate (2) at (-5,-2);
  \coordinate (3) at (-4,-2);
  \coordinate (4) at (-3,-2);
  \coordinate (5) at (-3.5,-3);
  \coordinate (6) at (-3.5,-4);
  \coordinate (7) at (-2.5,-3);
  \coordinate (8) at (-2,-1);
  \coordinate (9) at (0,-1);
  \coordinate (10) at (-1.6,-2);
  \coordinate (11) at (-0.8,-2);
  \coordinate (12) at (0,-2);
  \coordinate (13) at (0.8,-2);
  \coordinate (14) at (1.6,-2);
  \coordinate (15) at (2,-1);
  \coordinate (16) at (4,-1);
  
  \foreach \i in {0,1,2,3,4,5,7,8,9,15,16}
  \filldraw (\i) circle (2.5pt);
  
  \node[above] at (0) {\scriptsize \color{gray} \( v_{0} \)};
  
  \foreach \i in {1,4}
  \node[left] at (\i) {\scriptsize \color{gray} \( v_{\i} \)};
  
  \foreach \i in {2,3,5}
  \node[below] at (\i) {\scriptsize \color{gray} \( v_{\i} \)};
  
  \foreach \i in {1,8,9,15,16}
  \draw[ultra thick]
  (0) -- (\i);
  
  \foreach \i in {2,3,4}
  \draw[ultra thick]
  (1) -- (\i);
  
  \foreach \i in {5,7}
  \draw[ultra thick]
  (4) -- (\i);
  
  \node[above] at (1) {\( 1 \)};
  \node[right] at (4) {\( 2 \)};
  
  \node[right] at (-6,1.3) {4. \( s_{12}=(2,1) \)};
  \node at (0,-4.5) {};
  
  \end{tikzpicture}
  \quad 
  \begin{tikzpicture}[scale=0.5]
  \coordinate (0) at (0,0);
  \coordinate (1) at (-4,-1);
  \coordinate (2) at (-5,-2);
  \coordinate (3) at (-4,-2);
  \coordinate (4) at (-3,-2);
  \coordinate (5) at (-3.5,-3);
  \coordinate (6) at (-3.5,-4);
  \coordinate (7) at (-2.5,-3);
  \coordinate (8) at (-2,-1);
  \coordinate (9) at (0,-1);
  \coordinate (10) at (-1.6,-2);
  \coordinate (11) at (-0.8,-2);
  \coordinate (12) at (0,-2);
  \coordinate (13) at (0.8,-2);
  \coordinate (14) at (1.6,-2);
  \coordinate (15) at (2,-1);
  \coordinate (16) at (4,-1);
  
  \foreach \i in {0,1,2,3,4,5,6,7,8,9,15,16}
  \filldraw (\i) circle (2.5pt);
  
  \node[above] at (0) {\scriptsize \color{gray} \( v_{0} \)};
  
  \foreach \i in {1,4,5}
  \node[left] at (\i) {\scriptsize \color{gray} \( v_{\i} \)};
  
  \foreach \i in {2,3,6}
  \node[below] at (\i) {\scriptsize \color{gray} \( v_{\i} \)};
  
  \foreach \i in {1,8,9,15,16}
  \draw[ultra thick]
  (0) -- (\i);
  
  \foreach \i in {2,3,4}
  \draw[ultra thick]
  (1) -- (\i);
  
  \foreach \i in {5,7}
  \draw[ultra thick]
  (4) -- (\i);
  
  \draw[ultra thick] (5) -- (6);
  
  \node[above] at (1) {\( 1 \)};
  \node[right] at (4) {\( 2 \)};
  \node[right] at (5) {\( 1 \)};

  \node[right] at (-6,1.3) {5. \( s_{11}=(1,1) \)};
  \node at (0,-4.5) {};
  
  \end{tikzpicture}\\[2ex]
   \quad 
  \begin{tikzpicture}[scale=0.5]
  \coordinate (0) at (0,0);
  \coordinate (1) at (-4,-1);
  \coordinate (2) at (-5,-2);
  \coordinate (3) at (-4,-2);
  \coordinate (4) at (-3,-2);
  \coordinate (5) at (-3.5,-3);
  \coordinate (6) at (-3.5,-4);
  \coordinate (7) at (-2.5,-3);
  \coordinate (8) at (-2,-1);
  \coordinate (9) at (0,-1);
  \coordinate (10) at (-1.6,-2);
  \coordinate (11) at (-0.8,-2);
  \coordinate (12) at (0,-2);
  \coordinate (13) at (0.8,-2);
  \coordinate (14) at (1.6,-2);
  \coordinate (15) at (2,-1);
  \coordinate (16) at (4,-1);
  
  \foreach \i in {0,1,2,3,4,5,6,7,8,9,15,16}
  \filldraw (\i) circle (2.5pt);
  
  \node[above] at (0) {\scriptsize \color{gray} \( v_{0} \)};
  
  \foreach \i in {1,4,5}
  \node[left] at (\i) {\scriptsize \color{gray} \( v_{\i} \)};
  
  \foreach \i in {2,3,6,7,8,9}
  \node[below] at (\i) {\scriptsize \color{gray} \( v_{\i} \)};
  
  \foreach \i in {1,8,9,15,16}
  \draw[ultra thick]
  (0) -- (\i);
  
  \foreach \i in {2,3,4}
  \draw[ultra thick]
  (1) -- (\i);
  
  \foreach \i in {5,7}
  \draw[ultra thick]
  (4) -- (\i);
  
  \draw[ultra thick] (5) -- (6);
  
  \node[above] at (1) {\( 1 \)};
  \node[right] at (4) {\( 2 \)};
  \node[right] at (5) {\( 1 \)};
  
  \node[right] at (-6,1.3) {6 -- 8. \( s_{10}=s_{9}=s_{8}=(0,1) \)};
  \node at (0,-4.5) {};
  
  \end{tikzpicture}
  \quad 
  \begin{tikzpicture}[scale=0.5]
  \coordinate (0) at (0,0);
  \coordinate (1) at (-4,-1);
  \coordinate (2) at (-5,-2);
  \coordinate (3) at (-4,-2);
  \coordinate (4) at (-3,-2);
  \coordinate (5) at (-3.5,-3);
  \coordinate (6) at (-3.5,-4);
  \coordinate (7) at (-2.5,-3);
  \coordinate (8) at (-2,-1);
  \coordinate (9) at (0,-1);
  \coordinate (10) at (-1.6,-2);
  \coordinate (11) at (-0.8,-2);
  \coordinate (12) at (0,-2);
  \coordinate (13) at (0.8,-2);
  \coordinate (14) at (1.6,-2);
  \coordinate (15) at (2,-1);
  \coordinate (16) at (4,-1);
  
  \foreach \i in {0,1,2,...,16}
  \filldraw (\i) circle (2.5pt);
  
  \node[above] at (0) {\scriptsize \color{gray} \( v_{0} \)};
  
  \foreach \i in {1,4,5,9}
  \node[left] at (\i) {\scriptsize \color{gray} \( v_{\i} \)};
  
  \foreach \i in {2,3,6,7,8,10}
  \node[below] at (\i) {\scriptsize \color{gray} \( v_{\i} \)};
  
  \foreach \i in {1,8,9,15,16}
  \draw[ultra thick]
  (0) -- (\i);
  
  \foreach \i in {2,3,4}
  \draw[ultra thick]
  (1) -- (\i);
  
  \foreach \i in {5,7}
  \draw[ultra thick]
  (4) -- (\i);
  
  \draw[ultra thick] (5) -- (6);
  
  \foreach \i in {10,11,...,14}
  \draw[ultra thick]
  (9) -- (\i);
  
  \node[above] at (1) {\( 1 \)};
  \node[right] at (4) {\( 2 \)};
  \node[right] at (5) {\( 1 \)};
  \node[above=4pt, right] at (9) {\( 3 \)};
  
  \node[right] at (-6,1.3) {9. \( s_{7}=(3,-1) \)};
  \node at (0,-4.5) {};
  
  \end{tikzpicture}
  \quad
  \begin{tikzpicture}[scale=0.5]
  \coordinate (0) at (0,0);
  \coordinate (1) at (-4,-1);
  \coordinate (2) at (-5,-2);
  \coordinate (3) at (-4,-2);
  \coordinate (4) at (-3,-2);
  \coordinate (5) at (-3.5,-3);
  \coordinate (6) at (-3.5,-4);
  \coordinate (7) at (-2.5,-3);
  \coordinate (8) at (-2,-1);
  \coordinate (9) at (0,-1);
  \coordinate (10) at (-1.6,-2);
  \coordinate (11) at (-0.8,-2);
  \coordinate (12) at (0,-2);
  \coordinate (13) at (0.8,-2);
  \coordinate (14) at (1.6,-2);
  \coordinate (15) at (2,-1);
  \coordinate (16) at (4,-1);
  
  \foreach \i in {0,1,2,...,16}
  \filldraw (\i) circle (2.5pt);
  
  \node[above] at (0) {\scriptsize \color{gray} \( v_{0} \)};
  
  \foreach \i in {1,4,5,9}
  \node[left] at (\i) {\scriptsize \color{gray} \( v_{\i} \)};
  
  \foreach \i in {2,3,6,7,8,10,11,...,16}
  \node[below] at (\i) {\scriptsize \color{gray} \( v_{\i} \)};
  
  \foreach \i in {1,8,9,15,16}
  \draw[ultra thick]
  (0) -- (\i);
  
  \foreach \i in {2,3,4}
  \draw[ultra thick]
  (1) -- (\i);
  
  \foreach \i in {5,7}
  \draw[ultra thick]
  (4) -- (\i);
  
  \draw[ultra thick] (5) -- (6);
  
  \foreach \i in {10,11,...,14}
  \draw[ultra thick]
  (9) -- (\i);
  
  \node[above] at (1) {\( 1 \)};
  \node[right] at (4) {\( 2 \)};
  \node[right] at (5) {\( 1 \)};
  \node[above=4pt, right] at (9) {\( 3 \)};
  
  \node at (0,-4) {\( T \)};
  
  \node[right] at (-5.5,1.3) {10 -- 15. \( s_6=s_5=\cdots=s_1=(0,1)\)};
  \end{tikzpicture}
  }
  \caption{The reverse process of the bijection \( \phi_{\rm T} \).}\label{fig:reverseT}
  
\end{figure} 


\begin{figure}[ht]
  \centering
  \scriptsize{
  
  \begin{tikzpicture}[scale=0.5]
  \foreach \i in {0,1,2,...,30}
  \foreach \j in {0,1,2}
  \filldraw[fill=gray!70, color=gray!70] (0.5*\i,0.5*\j) circle (2.5pt);
  
  \draw (0,0) -- (15,0);
  \draw (0,0) -- (0,1);
  
  \coordinate (0) at (0,0);
  \coordinate (1) at (2/2,0/2);
  \coordinate (2) at (4/2,0/2);
  \coordinate (3) at (6/2,0/2);
  \coordinate (4) at (8/2,0/2);
  \coordinate (5) at (10/2,0/2);
  \coordinate (6) at (12/2,0/2);
  
  \foreach \i in {0,1,2,3,4,5,6}
  \filldraw (\i) circle (2.5pt);
  \draw[ultra thick]
  (0) --(1) --(2) --(3) --(4) --(5);
  \draw [ultra thick, color=black]
  (5) --(6);
  
  \foreach \i in {0}
  \filldraw (\i) circle (2.5pt);
  \draw[ultra thick]
  ;
  
  \node at (7,-0.5) {\( \psi_{\rm P}(Q_{(6)}) \)};
  \node[right] at (15.5,0.8) {\( \psi_{\rm I}(Q_{(6)})=(0,0,0,0,0,0,0) \)}; 
  \node[right] at (15.5,-0.2) {\( \psi_{\rm S}(Q_{(6)})=1\,2\,3\,4\,5\,6\,{7}\)}; 
  \node[right] at  (27.5,0.8) {\( Q_{(6)}=Q_{(5)}(0,1)\)};
  \node[right] at  (27.5,-0.2) {\( \height(Q_{(6)})=6 \)};
  \node at (32.5,0.25) {};
  \end{tikzpicture}\\[0.4ex]
  
  \begin{tikzpicture}[scale=0.5]
  \foreach \i in {0,1,2,...,30}
  \foreach \j in {0,1,2}
  \filldraw[fill=gray!70, color=gray!70] (0.5*\i,0.5*\j) circle (2.5pt);
  
  \draw (0,0) -- (15,0);
  \draw (0,0) -- (0,1);
  
  \coordinate (0) at (0,0);
  \coordinate (1) at (2/2,0/2);
  \coordinate (2) at (4/2,0/2);
  \coordinate (3) at (5/2,1/2);
  \coordinate (4) at (7/2,1/2);
  \coordinate (5) at (8/2,2/2);
  \coordinate (6) at (10/2,2/2);
  \coordinate (7) at (12/2,2/2);
  \coordinate (8) at (13/2,1/2);
  \coordinate (9) at (14/2,0/2);
  
  \foreach \i in {0,1,2,3,4,5,6,7,8,9}
  \filldraw (\i) circle (2.5pt);
  \draw[ultra thick]
  (0) --(1);
  \draw [ultra thick, color=black]
  (1) --(2) --(3) --(4) --(5) --(6) --(7) --(8) --(9);
  
  \foreach \i in {0}
  \filldraw (\i) circle (2.5pt);
  \draw[ultra thick]
  ;
  
  \node at (7,-0.5) {\( \psi_{\rm P}(Q_{(7)}) \)};
  \node[right] at (15.5,0.8) {\( \psi_{\rm I}(Q_{(7)})=(0,0,0,0,0,3,0,0) \)}; 
  \node[right] at (15.5,-0.2) {\( \psi_{\rm S}(Q_{(7)})=1\,2\,{5\,8\,3\,4\,6\,7}\)}; 
  \node[right] at  (27.5,0.8) {\( Q_{(7)}=Q_{(6)}(3,-1)\)};
  \node[right] at  (27.5,-0.2) {\( \height(Q_{(7)})=2 \)};
  \node at (32.5,0.25) {};
  \end{tikzpicture}\\[0.4ex]
  
  \begin{tikzpicture}[scale=0.5]
  \foreach \i in {0,1,2,...,30}
  \foreach \j in {0,1,2}
  \filldraw[fill=gray!70, color=gray!70] (0.5*\i,0.5*\j) circle (2.5pt);
  
  \draw (0,0) -- (15,0);
  \draw (0,0) -- (0,1);
  
  \coordinate (0) at (0,0);
  \coordinate (1) at (2/2,0/2);
  \coordinate (2) at (4/2,0/2);
  \coordinate (3) at (5/2,1/2);
  \coordinate (4) at (7/2,1/2);
  \coordinate (5) at (8/2,2/2);
  \coordinate (6) at (10/2,2/2);
  \coordinate (7) at (12/2,2/2);
  \coordinate (8) at (13/2,1/2);
  \coordinate (9) at (14/2,0/2);
  \coordinate (10) at (16/2,0/2);
  
  \foreach \i in {0,1,2,3,4,5,6,7,8,9,10}
  \filldraw (\i) circle (2.5pt);
  \draw[ultra thick]
  (0) --(1) --(2) --(3) --(4) --(5) --(6) --(7) --(8) --(9);
  \draw [ultra thick, color=black]
  (9) --(10);

  \foreach \i in {0}
  \filldraw (\i) circle (2.5pt);
  \draw[ultra thick]
  ;
  
  \node at (7,-0.5) {\( \psi_{\rm P}(Q_{(8)}) \)};
  \node[right] at (15.5,0.8) {\( \psi_{\rm I}(Q_{(8)})=(0,0,0,0,0,3,3,0,0) \)}; 
  \node[right] at (15.5,-0.2) {\( \psi_{\rm S}(Q_{(8)})=1\,2\,5\,8\,3\,4\,6\,7\,{9}\)}; 
  \node[right] at  (27.5,0.8) {\( Q_{(8)}=Q_{(7)}(0,1)\)};
  \node[right] at  (27.5,-0.2) {\( \height(Q_{(8)})=3 \)};
  \node at (32.5,0.25) {};
  \end{tikzpicture}\\[0.4ex]
  
  \begin{tikzpicture}[scale=0.5]
  \foreach \i in {0,1,2,...,30}
  \foreach \j in {0,1,2}
  \filldraw[fill=gray!70, color=gray!70] (0.5*\i,0.5*\j) circle (2.5pt);
  
  \draw (0,0) -- (15,0);
  \draw (0,0) -- (0,1);
  
  \coordinate (0) at (0,0);
  \coordinate (1) at (2/2,0/2);
  \coordinate (2) at (4/2,0/2);
  \coordinate (3) at (5/2,1/2);
  \coordinate (4) at (7/2,1/2);
  \coordinate (5) at (8/2,2/2);
  \coordinate (6) at (10/2,2/2);
  \coordinate (7) at (12/2,2/2);
  \coordinate (8) at (13/2,1/2);
  \coordinate (9) at (14/2,0/2);
  \coordinate (10) at (16/2,0/2);
  \coordinate (11) at (18/2,0/2);
  
  \foreach \i in {0,1,2,3,4,5,6,7,8,9,10,11}
  \filldraw (\i) circle (2.5pt);
  \draw[ultra thick]
  (0) --(1) --(2) --(3) --(4) --(5) --(6) --(7) --(8) --(9) --(10);
  \draw [ultra thick, color=black]
  (10) --(11);
  \foreach \i in {0}
  \filldraw (\i) circle (2.5pt);
  \draw[ultra thick]
  ;
  
  \node at (7,-0.5) {\( \psi_{\rm P}(Q_{(9)}) \)};
  \node[right] at (15.5,0.8) {\( \psi_{\rm I}(Q_{(9)})=(0,0,0,0,0,3,3,3,0,0) \)}; 
  \node[right] at (15.5,-0.2) {\( \psi_{\rm S}(Q_{(9)})=1\,2\,5\,8\,3\,4\,6\,7\,9\,{10} \)}; 
  \node[right] at  (27.5,0.8) {\( Q_{(9)}=Q_{(8)}(0,1)\)};
  \node[right] at  (27.5,-0.2) {\( \height(Q_{(9)})=4 \)};
  \node at (32.5,0.25) {};
  \end{tikzpicture}\\[0.4ex]
  
  \begin{tikzpicture}[scale=0.5]
  \foreach \i in {0,1,2,...,30}
  \foreach \j in {0,1,2}
  \filldraw[fill=gray!70, color=gray!70] (0.5*\i,0.5*\j) circle (2.5pt);
  
  \draw (0,0) -- (15,0);
  \draw (0,0) -- (0,1);
  
  \coordinate (0) at (0,0);
  \coordinate (1) at (2/2,0/2);
  \coordinate (2) at (4/2,0/2);
  \coordinate (3) at (5/2,1/2);
  \coordinate (4) at (7/2,1/2);
  \coordinate (5) at (8/2,2/2);
  \coordinate (6) at (10/2,2/2);
  \coordinate (7) at (12/2,2/2);
  \coordinate (8) at (13/2,1/2);
  \coordinate (9) at (14/2,0/2);
  \coordinate (10) at (16/2,0/2);
  \coordinate (11) at (18/2,0/2);
  \coordinate (12) at (20/2,0/2);
  
  \foreach \i in {0,1,2,3,4,5,6,7,8,9,10,11,12}
  \filldraw (\i) circle (2.5pt);
  \draw[ultra thick]
  (0) --(1) --(2) --(3) --(4) --(5) --(6) --(7) --(8) --(9) --(10) --(11);
  \draw [ultra thick, color=black]
  (11) --(12);
  \foreach \i in {0}
  \filldraw (\i) circle (2.5pt);
  \draw[ultra thick]
  ;
  
  \node at (7,-0.5) {\( \psi_{\rm P}(Q_{(10)}) \)};
  \node[right] at (15.5,0.8) {\( \psi_{\rm I}(Q_{(10)})=(0,0,0,0,0,3,3,3,3,0,0) \)}; 
  \node[right] at (15.5,-0.2) {\( \psi_{\rm S}(Q_{(10)})=1\,2\,5\,8\,3\,4\,6\,7\,9\,10\,{11}\)}; 
  \node[right] at  (27.5,0.8) {\( Q_{(10)}=Q_{(9)}(0,1)\)};
  \node[right] at  (27.5,-0.2) {\( \height(Q_{(10)})=5\)};
  \node at (32.5,0.25) {};
  \end{tikzpicture}\\[0.4ex]
  
  \begin{tikzpicture}[scale=0.5]
  \foreach \i in {0,1,2,...,30}
  \foreach \j in {0,1,2}
  \filldraw[fill=gray!70, color=gray!70] (0.5*\i,0.5*\j) circle (2.5pt);
  
  \draw (0,0) -- (15,0);
  \draw (0,0) -- (0,1);
  
  \coordinate (0) at (0,0);
  \coordinate (1) at (2/2,0/2);
  \coordinate (2) at (4/2,0/2);
  \coordinate (3) at (5/2,1/2);
  \coordinate (4) at (7/2,1/2);
  \coordinate (5) at (8/2,2/2);
  \coordinate (6) at (10/2,2/2);
  \coordinate (7) at (12/2,2/2);
  \coordinate (8) at (13/2,1/2);
  \coordinate (9) at (14/2,0/2);
  \coordinate (10) at (16/2,0/2);
  \coordinate (11) at (18/2,0/2);
  \coordinate (12) at (20/2,0/2);
  \coordinate (13) at (21/2,1/2);
  \coordinate (14) at (22/2,0/2);
  
  \foreach \i in {0,1,2,3,4,5,6,7,8,9,10,11,12,13,14}
  \filldraw (\i) circle (2.5pt);
  \draw[ultra thick]
  (0) --(1) --(2) --(3) --(4) --(5) --(6) --(7) --(8) --(9) --(10) --(11);
  \draw [ultra thick, color=black]
  (11) --(12) --(13) --(14);
  \foreach \i in {0}
  \filldraw (\i) circle (2.5pt);
  \draw[ultra thick]
  ;
  
  \node at (7,-0.5) {\( \psi_{\rm P}(Q_{(11)}) \)};
  \node[right] at (15.5,0.8) {\( \psi_{\rm I}(Q_{(11)})=(0,0,0,0,0,3,3,3,3,4,0,0) \)}; 
  \node[right] at (15.5,-0.2) {\( \psi_{\rm S}(Q_{(11)})= 
  1\,2\,5\,8\,3\,4\,6\,7\,9\,10\,{12\,11}\)}; 
  \node[right] at  (27.5,0.8) {\( Q_{(11)}=Q_{(10)}(1,1)\)};
  \node[right] at  (27.5,-0.2) {\( \height(Q_{(11)})=5 \)};
  \node at (32.5,0.25) {};
  \end{tikzpicture}\\[0.4ex]
  
  \begin{tikzpicture}[scale=0.5]
  \foreach \i in {0,1,2,...,30}
  \foreach \j in {0,1,2}
  \filldraw[fill=gray!70, color=gray!70] (0.5*\i,0.5*\j) circle (2.5pt);
  
  \draw (0,0) -- (15,0);
  \draw (0,0) -- (0,1);
  
  \coordinate (0) at (0,0);
  \coordinate (1) at (2/2,0/2);
  \coordinate (2) at (4/2,0/2);
  \coordinate (3) at (5/2,1/2);
  \coordinate (4) at (7/2,1/2);
  \coordinate (5) at (8/2,2/2);
  \coordinate (6) at (10/2,2/2);
  \coordinate (7) at (12/2,2/2);
  \coordinate (8) at (13/2,1/2);
  \coordinate (9) at (14/2,0/2);
  \coordinate (10) at (16/2,0/2);
  \coordinate (11) at (18/2,0/2);
  \coordinate (12) at (19/2,1/2);
  \coordinate (13) at (20/2,2/2);
  \coordinate (14) at (21/2,1/2);
  \coordinate (15) at (23/2,1/2);
  \coordinate (16) at (24/2,0/2);

  \foreach \i in {0,1,2,3,4,5,6,7,8,9,10,11,12,13,14,15,16}
  \filldraw (\i) circle (2.5pt);
  \draw[ultra thick]
  (0) --(1) --(2) --(3) --(4) --(5) --(6) --(7) --(8) --(9) --(10);
  \draw [ultra thick, color=black]
  (10) --(11) --(12) --(13) --(14) --(15) --(16);
  \foreach \i in {0}
  \filldraw (\i) circle (2.5pt);
  \draw[ultra thick]
  ;
  
  \node at (7,-0.5) {\( \psi_{\rm P}(Q_{(12)}) \)};
  \node[right] at (15.5,0.8) {\( \psi_{\rm I}(Q_{(12)})=(0,0,0,0,0,3,3,3,3,4,6,0,0) \)}; 
  \node[right] at (15.5,-0.2) {\( \psi_{\rm S}(Q_{(12)})
  =1\,2\,5\,8\,3\,4\,6\,7\,9\,{12\,13\,11\,10}\)}; 
  \node[right] at  (27.5,0.8) {\( Q_{(12)}=Q_{(11)}(2,1)\)};
  \node[right] at  (27.5,-0.2) {\( \height(Q_{(12)})=4\)};
  \node at (32.5,0.25) {};
  \end{tikzpicture}\\[0.4ex]
  
  \begin{tikzpicture}[scale=0.5]
  \foreach \i in {0,1,2,...,30}
  \foreach \j in {0,1,2}
  \filldraw[fill=gray!70, color=gray!70] (0.5*\i,0.5*\j) circle (2.5pt);
  
  \draw (0,0) -- (15,0);
  \draw (0,0) -- (0,1);
  
  \coordinate (0) at (0,0);
  \coordinate (1) at (2/2,0/2);
  \coordinate (2) at (4/2,0/2);
  \coordinate (3) at (5/2,1/2);
  \coordinate (4) at (7/2,1/2);
  \coordinate (5) at (8/2,2/2);
  \coordinate (6) at (10/2,2/2);
  \coordinate (7) at (12/2,2/2);
  \coordinate (8) at (13/2,1/2);
  \coordinate (9) at (14/2,0/2);
  \coordinate (10) at (16/2,0/2);
  \coordinate (11) at (18/2,0/2);
  \coordinate (12) at (19/2,1/2);
  \coordinate (13) at (20/2,2/2);
  \coordinate (14) at (21/2,1/2);
  \coordinate (15) at (23/2,1/2);
  \coordinate (16) at (24/2,0/2);
  \coordinate (17) at (26/2,0/2);
  
  \foreach \i in {0,1,2,3,4,5,6,7,8,9,10,11,12,13,14,15,16,17}
  \filldraw (\i) circle (2.5pt);
  \draw[ultra thick]
  (0) --(1) --(2) --(3) --(4) --(5) --(6) --(7) --(8) --(9) --(10) --(11) --(12) --(13) --(14) --(15) --(16);
  \draw [ultra thick, color=black]
  (16) --(17);

  \foreach \i in {0}
  \filldraw (\i) circle (2.5pt);
  \draw[ultra thick]
  ;
  
  \node at (7,-0.5) {\( \psi_{\rm P}(Q_{(13)}) \)};
  \node[right] at (15.5,0.8) {\( \psi_{\rm I}(Q_{(13)})=(0,0,0,0,0,3,3,3,3,4,6,6,0,0) \)}; 
  \node[right] at (15.5,-0.2) {\( \psi_{\rm S}(Q_{(13)})=1\,2\,5\,8\,3\,4\,6\,7\,9\,12\,13\,11\,10\,{14}\)}; 
  \node[right] at  (27.5,0.8) {\( Q_{(13)}=Q_{(12)}(0,1)\)};
  \node[right] at  (27.5,-0.2) {\( \height(Q_{(13)})=5 \)};
  \node at (32.5,0.25) {};
  \end{tikzpicture}\\[0.4ex]
  
  \begin{tikzpicture}[scale=0.5]
  \foreach \i in {0,1,2,...,30}
  \foreach \j in {0,1,2}
  \filldraw[fill=gray!70, color=gray!70] (0.5*\i,0.5*\j) circle (2.5pt);
  
  \draw (0,0) -- (15,0);
  \draw (0,0) -- (0,1);
  
  \coordinate (0) at (0,0);
  \coordinate (1) at (2/2,0/2);
  \coordinate (2) at (4/2,0/2);
  \coordinate (3) at (5/2,1/2);
  \coordinate (4) at (7/2,1/2);
  \coordinate (5) at (8/2,2/2);
  \coordinate (6) at (10/2,2/2);
  \coordinate (7) at (12/2,2/2);
  \coordinate (8) at (13/2,1/2);
  \coordinate (9) at (14/2,0/2);
  \coordinate (10) at (16/2,0/2);
  \coordinate (11) at (18/2,0/2);
  \coordinate (12) at (19/2,1/2);
  \coordinate (13) at (20/2,2/2);
  \coordinate (14) at (21/2,1/2);
  \coordinate (15) at (23/2,1/2);
  \coordinate (16) at (24/2,0/2);
  \coordinate (17) at (26/2,0/2);
  \coordinate (18) at (28/2,0/2);
  
  \foreach \i in {0,1,2,3,4,5,6,7,8,9,10,11,12,13,14,15,16,17,18}
  \filldraw (\i) circle (2.5pt);
  \draw[ultra thick]
  (0) --(1) --(2) --(3) --(4) --(5) --(6) --(7) --(8) --(9) --(10) --(11) --(12) --(13) --(14) --(15) --(16) --(17);
  \draw [ultra thick, color=black]
  (17) --(18);

  \foreach \i in {0}
  \filldraw (\i) circle (2.5pt);
  \draw[ultra thick]
  ;
  
  \node at (7,-0.5) {\( \psi_{\rm P}(Q_{(14)}) \)};
  \node[right] at (15.5,0.8) {\( \psi_{\rm I}(Q_{(14)})=(0,0,0,0,0,3,3,3,3,4,6,6,6,0,0) \)}; 
  \node[right] at (15.5,-0.2) {\( \psi_{\rm S}(Q_{(14)})=1\,2\,5\,8\,3\,4\,6\,7\,9\,12\,13\,11\,10\,14\,{15}\)}; 
  \node[right] at  (27.5,0.8) {\( Q_{(14)}=Q_{(13)}(0,1)\)};
  \node[right] at  (27.5,-0.2) {\( \height(Q_{(14)})=6 \)};
  \node at (32.5,0.25) {};
  \end{tikzpicture}\\[0.4ex]
  
  \begin{tikzpicture}[scale=0.5]
  \foreach \i in {0,1,2,...,30}
  \foreach \j in {0,1,2}
  \filldraw[fill=gray!70, color=gray!70] (0.5*\i,0.5*\j) circle (2.5pt);
  
  \draw (0,0) -- (15,0);
  \draw (0,0) -- (0,1);
  
  \coordinate (0) at (0,0);
  \coordinate (1) at (2/2,0/2);
  \coordinate (2) at (4/2,0/2);
  \coordinate (3) at (5/2,1/2);
  \coordinate (4) at (7/2,1/2);
  \coordinate (5) at (8/2,2/2);
  \coordinate (6) at (10/2,2/2);
  \coordinate (7) at (12/2,2/2);
  \coordinate (8) at (13/2,1/2);
  \coordinate (9) at (14/2,0/2);
  \coordinate (10) at (16/2,0/2);
  \coordinate (11) at (18/2,0/2);
  \coordinate (12) at (19/2,1/2);
  \coordinate (13) at (20/2,2/2);
  \coordinate (14) at (21/2,1/2);
  \coordinate (15) at (23/2,1/2);
  \coordinate (16) at (24/2,0/2);
  \coordinate (17) at (25/2,1/2);
  \coordinate (18) at (27/2,1/2);
  \coordinate (19) at (28/2,2/2);
  \coordinate (20) at (29/2,1/2);
  \coordinate (21) at (30/2,0/2);
  
  \foreach \i in {0,1,2,3,4,5,6,7,8,9,10,11,12,13,14,15,16,17,18,19,20,21}
  \filldraw (\i) circle (2.5pt);
  \draw[ultra thick]
  (0) --(1) --(2) --(3) --(4) --(5) --(6) --(7) --(8) --(9) --(10);
  \draw [ultra thick, color=black]
  (10) --(11) --(12) --(13) --(14) --(15) --(16) --(17) --(18) --(19) --(20) --(21);
  
  \foreach \i in {0}
  \filldraw (\i) circle (2.5pt);
  \draw[ultra thick]
  ;
  
  \node at (7,-0.5) {\( \psi_{\rm P}(Q_{(15)}) \)};
  \node[right] at (15.5,0.8) {\( \psi_{\rm I}(Q_{(15)})=(0,0,0,0,0,3,3,3,3,4,6,7,6,6,0,0) \)}; 
  \node[right] at (15.5,-0.2) {\( \psi_{\rm S}(Q_{(15)})=1\,2\,5\,8\,3\,4\,6\,7\,9\,{16\,12\,13\,11\,10\,14\,15}\)}; 
  \node[right] at (27.5,0.8) {\( Q_{(15)}=Q_{(14)}(1,-1)\)};
  \node[right] at (27.5,-0.2) {\( \height(Q_{(15)})=4 \)};
  \node at (32.5,0.25) {};
  \end{tikzpicture}
  }
  \caption{The reverse process of the bijections \( \phi_{\rm P}\), \(\phi_{\rm I}\), and \( \phi_{\rm S} \).}\label{fig:reverse}
\end{figure}
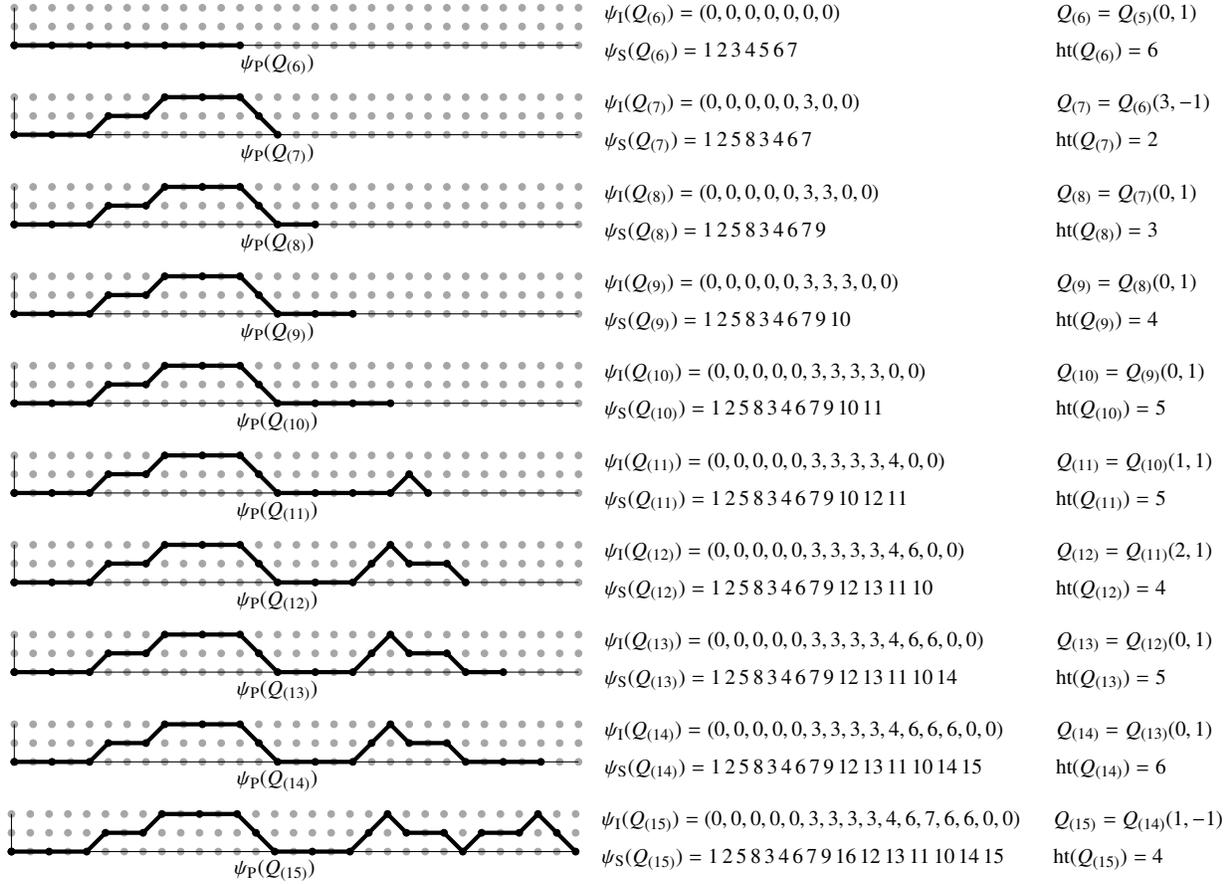 

\section{The direct sums}
\label{sec:D-S}
Recall that we have the unique decomposition {\( Q_1 (0,1) Q_2 \dots (0,1) Q_{r+1} \)} of an \( F \)-path \( Q \) with {\( \height(P)=r \)}, and the unique decomposition {\( P_1 h P_2 \dots h P_{r+1} \)} of a Schr\"{o}der path \( P \) with {\( \comp(P)=r \)}, where each \( P_i \) is a small Schr\"{o}der path. Furthermore, 
a {\( (2341, 2431, 3241) \)}-avoiding permutation \( \pi \) with {\( \block(\pi)=r+1 \)} can be written uniquely as 
{\( \pi_1 \oplus \pi_2 \oplus \cdots \oplus \pi_{r+1} \) }with indecomposable {\( (2341, 2431, 3241) \)}-avoiding permutations \( \pi_i \). Compare the following example with Example~\ref{ex:recover}.

\begin{example}
The {\( (2341, 2431, 3241) \)}-avoiding permutation
\( \pi={1~2~5~8~3~4~6~7~9~16~12~13~11~10~14~15} \)
can be written as \( \pi_1 \oplus \pi_2 \oplus \pi_3 \oplus \pi_4 \oplus \pi_5 \) with indecomposable permutations \( \pi_1=\pi_2=\pi_4=1 \), \( \pi_3={361245} \), and \( \pi_5={7342156} \). 
Let \( R= \phi_{\rm S}(\pi) \) and \( R_{i} = \phi_{\rm S}(\pi_i) \) 
for \( 1\leq i \leq 5 \). See Figure~\ref{fig:factor} for the other related objects which are in bijection with each \( R_i \). One can see that  
\begin{align*} 
&R=R_1 (0,1) R_2 (0,1) R_3 (0,1) R_4 (0,1) R_5,\\
&\psi_{\rm P}(R)=\psi_{\rm P}(R_1) \,h\, \psi_{\rm P}(R_2) \,h\, \psi_{\rm P}(R_3) \,h\, \psi_{\rm P}(R_4) \,h\, \psi_{\rm P}(R_5),\\
&\psi_{\rm B}(R)=\overline{\psi_{\rm B}(R_1)} ~ \overline{\psi_{\rm B}(R_2)} ~ \overline{\psi_{\rm B}(R_3)} ~ \overline{\psi_{\rm B}(R_4)} ~ \overline{\psi_{\rm P}(R_5)} \,\dr^5,\\
&\psi_{\rm S}(R)=\psi_{\rm S}(R_1) \oplus \psi_{\rm S}(R_2) \oplus \psi_{\rm S}(R_3) \oplus \psi_{\rm S}(R_4) \oplus \psi_{\rm S}(R_5),\\
&\psi_{\rm T}(R)=(\overline{\psi_{\rm T}(R_1)}, \overline{\psi_{\rm T}(R_2)}, \overline{\psi_{\rm T}(R_3)}, \overline{\psi_{\rm T}(R_4)}, \overline{\psi_{\rm T}(R_5)}),
\end{align*}
where \( \overline{\psi_{\rm B}(R_i)} \) is the path \( \psi_{\rm B}(R_i) \) without the last descent run 
(see Figure~\ref{fig:LDyck})
and \( \overline{\psi_{\rm T}(R_i)} \) is the weighted ordered tree \( \psi_{\rm T}(R_i) \) without the root vertex and its incident edge.   
\end{example}

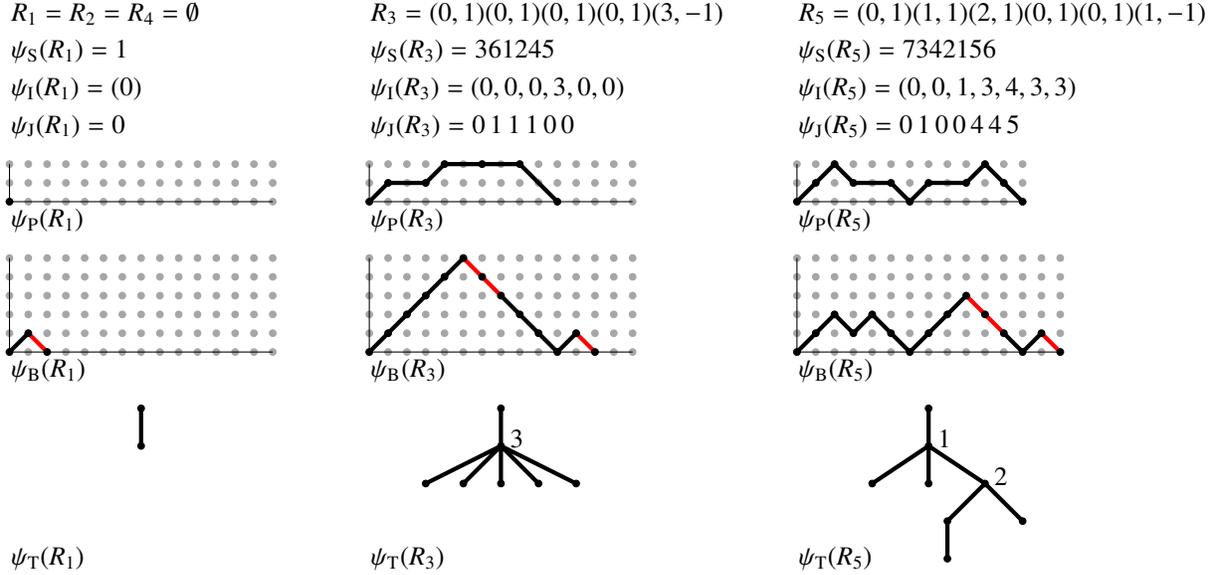
\begin{figure}
\centering
\small{
\begin{tikzpicture}[scale=0.5]
\foreach \i in {0,1,2,...,14}
\foreach \j in {0,1,2}
\filldraw[fill=gray!70, color=gray!70] (0.5*\i,0.5*\j) circle (2.5pt);

\draw (0,0) -- (7,0);
\draw (0,0) -- (0,1);

\coordinate (0) at (0,0);

\foreach \i in {0}
\filldraw (\i) circle (2.5pt);

\node[right] at (-0.2,-0.5) {\( \psi_{\rm P}(R_1) \)};

\node[right] at (-0.2,5) {\( R_1=R_2=R_4=\emptyset\)};
\node[right] at (-0.2,4) {\( \psi_{\rm S}(R_1)=1\)}; 
\node[right] at (-0.2,3) {\( \psi_{\rm I}(R_1)=(0) \)}; 
\node[right] at (-0.2,2) {\( \psi_{\rm J}(R_1)=0 \)}; 

\foreach \i in {0,1,2,...,14}
\foreach \j in {0,1,2,3,4,5}
\filldraw[fill=gray!70, color=gray!70] (0.5*\i,-4+0.5*\j) circle (2.5pt);

\draw (0,-4) -- (7,-4);
\draw (0,-4) -- (0,-1.5);

\coordinate (0) at (0,-4);
\coordinate (1) at (1/2,-4+1/2);
\coordinate (2) at (2/2,-4);

\draw[ultra thick]
(0) -- (1);

\draw[ultra thick, color=red]
(1) -- (2);

\foreach \i in {0,1,2}
\filldraw (\i) circle (2.5pt);

\node[right] at (-0.2,-4.5) {\( \psi_{\rm B}(R_1) \)};

\coordinate (0) at (3.5,-5.5);
\coordinate (1) at (3.5,-6.5);

\draw[ultra thick]
(0) -- (1); 

\foreach \i in {0,1}
\filldraw (\i) circle (2.5pt);

\node[right] at (-0.2,-9.5) {\( \psi_{\rm T}(R_1) \)};

\end{tikzpicture}
\qquad \quad
\begin{tikzpicture}[scale=0.5]
\foreach \i in {0,1,2,...,14}
\foreach \j in {0,1,2}
\filldraw[fill=gray!70, color=gray!70] (0.5*\i,0.5*\j) circle (2.5pt);

\draw (0,0) -- (7,0);
\draw (0,0) -- (0,1);

\coordinate (0) at (0,0);
\coordinate (1) at (1/2,1/2);
\coordinate (2) at (3/2,1/2);
\coordinate (3) at (4/2,2/2);
\coordinate (4) at (6/2,2/2);
\coordinate (5) at (8/2,2/2);
\coordinate (6) at (10/2,0/2);

\foreach \i in {0,1,...,6}
\filldraw (\i) circle (2.5pt);
\draw[ultra thick]
(0) -- (1) -- (2) -- (3) -- (4) -- (5) -- (6);

\node[right] at (-0.2,-0.5) {\( \psi_{\rm P}(R_3) \)};

\node[right] at (-0.2,5) {\( R_3=(0,1)(0,1)(0,1)(0,1)(3,-1) \)};
\node[right] at (-0.2,4) {\( \psi_{\rm S}(R_3)={361245}\)}; 
\node[right] at (-0.2,3) {\( \psi_{\rm I}(R_3)=(0,0,0,3,0,0) \)}; 
\node[right] at (-0.2,2) {\( \psi_{\rm J}(R_3)=0\,1\,1\,1\,0\,0 \)}; 

\foreach \i in {0,1,2,...,14}
\foreach \j in {0,1,2,3,4,5}
\filldraw[fill=gray!70, color=gray!70] (0.5*\i,-4+0.5*\j) circle (2.5pt);

\draw (0,-4) -- (7,-4);
\draw (0,-4) -- (0,-1.5);

\coordinate (0) at (0,-4);
\coordinate (1) at (1/2,-4+1/2);
\coordinate (2) at (2/2,-4+2/2);
\coordinate (3) at (3/2,-4+3/2);
\coordinate (4) at (4/2,-4+4/2);
\coordinate (5) at (5/2,-4+5/2);
\coordinate (6) at (6/2,-4+4/2);
\coordinate (7) at (7/2,-4+3/2);
\coordinate (8) at (8/2,-4+2/2);
\coordinate (9) at (9/2,-4+1/2);
\coordinate (10) at (10/2,-4);
\coordinate (11) at (11/2,-4+1/2);
\coordinate (12) at (12/2,-4);

\draw[ultra thick]
(0) -- (1) -- (2) -- (3) -- (4) -- (5)
(7) -- (8) -- (9) -- (10) -- (11);

\draw[ultra thick, color=red]
(5) -- (6) -- (7)
(11) -- (12);

\foreach \i in {0,1,...,12}
\filldraw (\i) circle (2.5pt);

\node[right] at (-0.2,-4.5) {\( \psi_{\rm B}(R_3) \)};

\coordinate (0) at (3.5,-5.5);
\coordinate (1) at (3.5,-6.5);
\coordinate (2) at (1.5,-7.5);
\coordinate (3) at (2.5,-7.5);
\coordinate (4) at (3.5,-7.5);
\coordinate (5) at (4.5,-7.5);
\coordinate (6) at (5.5,-7.5);

\draw[ultra thick]
(0) -- (1) -- (2)
(1) -- (3) 
(1) -- (4) 
(1) -- (5) 
(1) -- (6); 

\foreach \i in {0,1,...,6}
\filldraw (\i) circle (2.5pt);

\node[right] at (3.5,-6.3) {\( 3 \)}; 
\node[right] at (-0.2,-9.5) {\( \psi_{\rm T}(R_3) \)};
\end{tikzpicture}
\qquad
\begin{tikzpicture}[scale=0.5]
\foreach \i in {0,1,2,...,12}
\foreach \j in {0,1,2}
\filldraw[fill=gray!70, color=gray!70] (0.5*\i,0.5*\j) circle (2.5pt);

\draw (0,0) -- (6,0);
\draw (0,0) -- (0,1);

\coordinate (0) at (0,0);
\coordinate (1) at (1/2,1/2);
\coordinate (2) at (2/2,2/2);
\coordinate (3) at (3/2,1/2);
\coordinate (4) at (5/2,1/2);
\coordinate (5) at (6/2,0/2);
\coordinate (6) at (7/2,1/2);
\coordinate (7) at (9/2,1/2);
\coordinate (8) at (10/2,2/2);
\coordinate (9) at (11/2,1/2);
\coordinate (10) at (12/2,0/2);

\foreach \i in {0,1,...,10}
\filldraw (\i) circle (2.5pt);
\draw[ultra thick]
(0) -- (1) -- (2) -- (3) -- (4) -- (5) -- (6) -- (7) -- (8) -- (9) -- (10);

\node[right] at (-0.2,-0.5) {\( \psi_{\rm P}(R_5) \)};

\node[right] at (-0.2,5) {\( R_5=(0,1)(1,1)(2,1)(0,1)(0,1)(1,-1) \)};
\node[right] at (-0.2,4) {\( \psi_{\rm S}(R_5)={7342156}\)}; 
\node[right] at (-0.2,3) {\( \psi_{\rm I}(R_5)=(0,0,1,3,4,3,3) \)}; 
\node[right] at (-0.2,2) {\( \psi_{\rm J}(R_5)=0\,1\,0\,0\,4\,4\,5 \)}; 

\foreach \i in {0,1,2,...,14}
\foreach \j in {0,1,2,3,4,5}
\filldraw[fill=gray!70, color=gray!70] (0.5*\i,-4+0.5*\j) circle (2.5pt);

\draw (0,-4) -- (7,-4);
\draw (0,-4) -- (0,-1.5);

\coordinate (0) at (0,-4);
\coordinate (1) at (1/2,-4+1/2);
\coordinate (2) at (2/2,-4+2/2);
\coordinate (3) at (3/2,-4+1/2);
\coordinate (4) at (4/2,-4+2/2);
\coordinate (5) at (5/2,-4+1/2);
\coordinate (6) at (6/2,-4);
\coordinate (7) at (7/2,-4+1/2);
\coordinate (8) at (8/2,-4+2/2);
\coordinate (9) at (9/2,-4+3/2);
\coordinate (10) at (10/2,-4+2/2);
\coordinate (11) at (11/2,-4+1/2);
\coordinate (12) at (12/2,-4);
\coordinate (13) at (13/2,-4+1/2);
\coordinate (14) at (14/2,-4);

\draw[ultra thick]
(0) -- (1) -- (2) -- (3) -- (4) -- (5) -- (6) --(7) -- (8) -- (9) 
(11) -- (12) --(13);

\draw[ultra thick, color=red]
(9) -- (10) -- (11)
(13) -- (14);

\foreach \i in {0,1,...,14}
\filldraw (\i) circle (2.5pt);

\node[right] at (-0.2,-4.5) {\( \psi_{\rm B}(R_5) \)};

\coordinate (0) at (3.5,-5.5);
\coordinate (1) at (3.5,-6.5);
\coordinate (2) at (2,-7.5);
\coordinate (3) at (3.5,-7.5);
\coordinate (4) at (5,-7.5);
\coordinate (5) at (4,-8.5);
\coordinate (6) at (4,-9.5);
\coordinate (7) at (6,-8.5);

\draw[ultra thick]
(0) -- (1) -- (2)
(1) -- (3) 
(1) -- (4) -- (5) -- (6)
(4) -- (7); 

\foreach \i in {0,1,...,7}
\filldraw (\i) circle (2.5pt);

\node[right] at (3.5,-6.3) {\( 1 \)}; 
\node[right] at (5,-7.3) {\( 2 \)}; 

\node[right] at (-0.2,-9.5) {\( \psi_{\rm T}(R_5) \)};

\end{tikzpicture}
}
\caption{Three small examples of the bijections.}\label{fig:factor}
\end{figure} 

It is natural to define the direct sums of our paths and weighted ordered trees as follows. 

\begin{defn}
Let \( n\) and \( m \) are nonnegative integers.
\begin{enumerate}
\item For any Schr\"{o}der paths without triple descents \( P_1 \in \schp_n \) and \( P_2 \in \schp_m \), the \emph{direct sum} of \( P_1 \) and \( P_2 \) is the Schr\"{o}der path of semilength \( m+n+1 \) without triple descents that is defined by \( P_1 \oplus P_2 \coloneqq P_1 h P_2 \).
\item For any \( F \)-paths \( Q_1\in \Fp_n \) and \( Q_2 \in \Fp_m \), the \emph{direct sum} of \( Q_1 \) and \( Q_2 \) is the \( F \)-path of length \( m+n+1 \) defined by \( Q_1 \oplus Q_2 \coloneqq Q_1 (0,1) Q_2 \).
\item For any restricted bicolored Dyck paths \( B_1\in \bdp_{n+1} \) and \( B_2 \in \bdp_{m+1} \), the \emph{direct sum} of \( B_1 \) and \( B_2 \) is the restricted bicolored Dyck path of semilength \( m+n+2 \) defined by \( B_1 \oplus B_2 \coloneqq \overline{B_1} B_2 \dr^{\last(B_1)} \), where \( \overline{B_1} \) is the path \( B_1 \) without the last descent run \( \dr^{\last(B_1)} \).
\item For any weighted ordered trees \( T=(T_1,T_2,\dots,T_{n+1}) \) and \( S=(S_1, S_2, \dots, S_{m+1}) \), the \emph{direct sum} of \( T \) and \( S \) is the weighted ordered tree defined by \( T\oplus S=(T_1,\dots,T_{n+1}, S_1,\dots, S_{m+1}) \). 
\end{enumerate}
\end{defn}

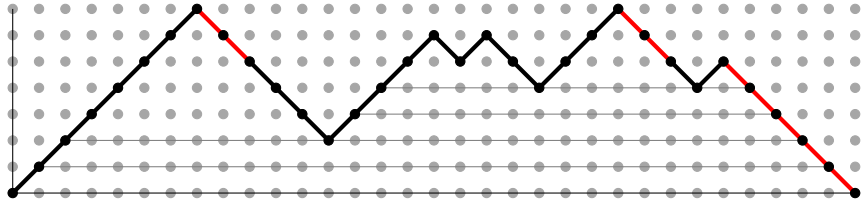
\begin{figure}
\centering
\begin{tikzpicture}[scale=0.7] 
\foreach \i in {0,1,2,...,32}
\foreach \j in {0,1,2,...,7}
\filldraw[fill=gray!70, color=gray!70] (0.5*\i,0.5*\j) circle (2.5pt);

\draw (0,0) -- (16,0);
\draw (0,0) -- (0,3.5);

\foreach \i/\j in {0/0,1/1,2/2,3/3,4/4,5/5,6/6,7/7,8/6,9/5,10/4,11/3,12/2,13/3,14/4,15/5,16/6,17/5,18/6,19/5,20/4,21/5,22/6,23/7,
24/6,25/5,26/4,27/5,28/4,29/3,30/2,31/1,32/0}
\coordinate (\i) at (0.5*\i,0.5*\j);

\draw[color=gray] 
(1) -- (31)
(2) -- (30)
(13) -- (29)
(14) -- (28);

\draw[ultra thick]
(0) -- (1) -- (2) -- (3) -- (4) -- (5) -- (6) -- (7)
(9) -- (10) -- (11) -- (12) -- (13) -- (14) -- (15) -- (16) -- (17) -- (18) -- (19) -- (20) -- (21) -- (22) -- (23) 
(25) -- (26) -- (27);

\draw[ultra thick, color=red]
(7) -- (8) -- (9)
(23) -- (24) -- (25)
(27) -- (28) -- (29) -- (30) -- (31) -- (32);

\foreach \i in {0,1,2,...,32}
\filldraw (\i) circle (2.5pt);

\end{tikzpicture}
\caption{A decomposition of a restricted bicolored Dyck path.}\label{fig:LDyck}
\end{figure} 

Now we consider the direct sums of our pattern avoiding inversion sequences.
For the \( (101,102) \)-avoiding case, let us give a lemma.

\begin{lem}\label{lem:directI}
Let \( e=(e_1, e_2, \dots, e_n) \) and \( f=(f_1,f_2, \dots, f_m) \) be \( (101,102) \)-avoiding inversion sequences
{with \( \maxid(e)=k \) and $\maxid(f)=\ell$}.
{
If 
\[ g=(g_1,g_2,\dots,g_{n+m}) \coloneqq (e_1, \dots, e_k, e_k+f_1, \dots, e_k+f_m, e_{k+1}, \dots, e_n ), \] 
then \( g \) is an \( (101,102) \)-avoiding inversion sequence
with 
\[ \maxid(g)-\max(g)=k-e_k+\ell-f_{\ell}. \] 
Moreover, for \( i>k \), we have 
\[ i-g_i > k - g_k.\]
}
\end{lem}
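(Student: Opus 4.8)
The plan is to analyze $g$ through the three consecutive blocks out of which it is assembled: block $A$, the first $k$ entries $e_1,\dots,e_k$; block $B$, the next $m$ entries $e_k+f_1,\dots,e_k+f_m$; and block $C$, the last $n-k$ entries $e_{k+1},\dots,e_n$. The argument rests on two elementary facts about $e$. Since $k=\maxid(e)$, every entry of $e$ at a position after $k$ is strictly less than $\max(e)=e_k$; and the prefix $e_1\le e_2\le\cdots\le e_k$ is weakly increasing, because an inversion $e_p>e_q$ with $p<q<k$ would, together with the position $k$ of the maximum, produce an occurrence of $101$ or $102$ in $e$ (the case $q=k$ being impossible, as $e_k$ is maximal). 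Consequently, in $g$ every value of block $A$ is $\le e_k$, every value of block $B$ is $\ge e_k$, and every value of block $C$ is $<e_k$. I will also use the reformulation that an inversion sequence avoids both $101$ and $102$ exactly when it has no indices $p<q<r$ with $g_q<g_p\le g_r$ (the sub-cases $g_p=g_r$ and $g_p<g_r$ being precisely $101$ and $102$).

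That $g$ is an inversion sequence is immediate: block $A$ inherits the bounds from $e$; for $k<i\le k+m$ one has $0\le e_k+f_{i-k}<k+(i-k)=i$; and for $i>k+m$ the entry is an $e_j$ with $j<i$. For the avoidance I would assume $p<q<r$ realize the forbidden configuration $g_q<g_p\le g_r$ and run through the cases according to which blocks contain $p,q,r$. If $p\in A$ and $q\in B$, then $g_q\ge e_k\ge g_p$ contradicts $g_q<g_p$; if $p\in B$ and $r\in C$, then $g_r<e_k\le g_p$ contradicts $g_p\le g_r$; if $p$ and $q$ both lie in $A$, then $g_q=e_q\ge e_p=g_p$ contradicts $g_q<g_p$; if all three lie in $B$, subtracting $e_k$ from each value gives an occurrence of $101$ or $102$ in $f$; if all three lie in $C$, the values form such an occurrence in a suffix of $e$, hence in $e$; and in the one remaining case, $p\in A$ and $q,r\in C$, the inequality $g_p\le g_r<e_k$ forces the three positions to form an occurrence of $101$ or $102$ in $e$. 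These cases are exhaustive, so $g$ avoids $101$ and $102$.

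For the statistics, the value $e_k+f_\ell=\max(e)+\max(f)$ occurs in block $B$ at position $k+\ell$, it is $\ge$ every value in blocks $A$ and $C$, so it equals $\max(g)$, and every entry to the right of position $k+\ell$ is strictly smaller: inside $B$ because $f_{i-k}<\max(f)$ once $i-k>\ell=\maxid(f)$, and inside $C$ because the value is $<e_k\le e_k+f_\ell$. Hence $\maxid(g)=k+\ell$, so $\maxid(g)-\max(g)=(k-e_k)+(\ell-f_\ell)$. Finally, for $k<i\le k+m$ we have $i-g_i=(k-e_k)+\bigl((i-k)-f_{i-k}\bigr)>k-e_k$ since $f$ is an inversion sequence, while for $i>k+m$, writing $j=i-m>k$, we have $i-g_i=(j-e_j)+m\ge\bigl((k+1)-(e_k-1)\bigr)+m>k-e_k$ because $e_j<e_k$; in both cases $i-g_i>k-e_k=k-g_k$, as required. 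The main obstacle is keeping the avoidance argument organized: the purpose of extracting the block value-bounds and the monotonicity of $A$ in advance is that each of the cases then closes in a single line, the genuinely non-trivial one being $p\in A$, $q,r\in C$, which is exactly where the avoidance hypothesis on $e$ itself is needed in full.
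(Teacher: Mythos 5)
Your proof is correct and follows essentially the same strategy as the paper's: assume a forbidden occurrence, use the value bounds on the three blocks (block $A$ values $\le e_k$, block $B$ values $\ge e_k$, block $C$ values $<e_k$) to rule out each placement of the indices, and then compute $\maxid(g)$, $\max(g)$, and the inequality $i-g_i>k-g_k$ exactly as the paper does. The only cosmetic difference is that you organize the case analysis by the blocks containing all three indices (adding an explicit monotonicity claim for the prefix $e_1\le\cdots\le e_k$), whereas the paper cases on the position of the middle index and closes the $b<k$ case by comparing against $e_k$ directly.
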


\begin{proof}
{
Suppose that, given \( 1\leq a<b<c \leq n+m\),
 $g_a g_b g_c$ contained in $g$ forms the pattern $101$ or $102$. 
 Clearly, $g_a > g_b$, $g_b < g_c$ and $g_a \le g_c$.
 Consider the position of $b$.
\begin{itemize}
\item If $b<k$, then $g_a g_b g_k=e_a e_b e_k$ also forms the pattern $101$ or $102$. This yields a contradiction because $e$ contains $e_a e_b e_k$, which forms the pattern $101$ or $102$.

\item If $k\le b \le k+m$, then $g_a > g_b$ yields $a>k$ and  $g_b < g_c$ yields $c<k+m$ 
because $g_i \ge \max(e)$ for $k<i\le k+m$.
Thus, $k<a<b<c<k+m$ and $g_a g_b g_c = (e_k+f_{a-k})(e_k+f_{b-k})(e_k+f_{c-k})$ holds. 
This yields a contradiction because $f$ contains $f_{a-k} f_{b-k} f_{c-k}$, which forms the pattern $101$ or $102$.

\item If $b>k+m$, then $a\le k$ or $a>k+m$ otherwise 
$g_a\ge \min\{g_{k+1},\dots, g_{k+m}\}>
\max\{g_{k+m+1}, \dots, g_{n+m}\}\ge g_c$. 
For $a\le k$, $g_a g_b g_c=e_a e_{b-m} e_{c-m}$ holds. This yields a contradiction because $e$ contains $e_a e_{b-m} e_{c-m}$, which forms the pattern $101$ or $102$.
For $a> k+m$, $g_a g_b g_c=e_{a-m} e_{b-m} e_{c-m}$ holds. This yields a contradiction because $e$ contains $e_{a-m} e_{b-m} e_{c-m}$, which forms the pattern $101$ or $102$.
\end{itemize}
Therefore, $g$ is an $(101, 102)$-avoiding inversion sequence.
Note that \( \maxid(g)=k+\ell \) and \( \max(g)=e_k+ f_{\ell} \). Hence, 
\[
\maxid(g)-\max(g)=k-e_k+\ell-e_{\ell}.
\]
For \( k<i\le k+m \), we have
\[
i-g_i =(k+j) - (e_k+f_j) =(k-e_k)+(j-f_j)>k-e_k=k-g_k.
\]
For $i>k+m$, we have
\[
i-g_i =(k+m+j) - e_{k+j} > (k+m+j)- e_k > k-e_k=k-g_k.
\]
Thus, for \( k<i\le k+m \), $i-g_i > k-g_k$ holds.

}
\end{proof}
 
{Due to Lemma~\ref{lem:directI},} we define the direct sum of \( (101,102) \)-avoiding inversion sequences as follows.

\begin{defn} 
Let \( n \) and \( m \) be positive integers.
For any \( (101,102) \)-avoiding inversion sequences \( e=(e_1, e_2, \dots, e_n) \) and \( f=(f_1,f_2, \dots, f_m) \), 
 the \emph{direct sum} of \( e \) and \( f \) is the \( (101,102) \)-avoiding inversion sequence 
 {$e \oplus f$} of length \( n+m \),  defined by 
\[
(e \oplus f)_i\coloneqq \begin{cases}
e_i & \mbox{ if \( 1\leq i \leq \maxid(e)\),}\\
f_{i-\maxid(e)}+\max(e) & \mbox{ if \( \maxid(e)+1 \leq i \leq \maxid(e)+m \),}\\
e_{i-m} & {\mbox{ if \( \maxid(e)+m+1 \leq i \leq n+m \).}}
\end{cases}
\]
An \( (101,102) \)-avoiding inversion sequence \( e \) is said to be \emph{connected} if \( \maxid(e)=\max(e)+1 \). Otherwise, \( e \) is said to be \emph{disconnected}. 
\end{defn}
{Note that \( \max(e \oplus f)=\max(e)+\max(f) \) and \( \maxid(e \oplus f)=\maxid(e)+\maxid(f) \). }
From Lemma~\ref{lem:directI}, 
for any disconnected \( (101, 102) \)-avoiding inversion sequence \( g \) of {length $N$}, 
we can decompose it as \( e \oplus f \) 
with the connected \( (101, 102) \)-avoiding inversion sequence \( f \coloneqq (0, g_{k+2}-g_{k+1}, \dots, g_{k+m}-g_{k+1} ) \),
where \( k \) is the largest index such that \( {k-g_k}=\maxid(g)-\max(g)-1 \) and {\( m \) is the largest positive integer} 
such that \( g_{k+m}\geq g_{k+1} \). 
{
Also, $e:=(g_1,\dots,g_k, g_{k+m+1},\dots,g_{N})$ is the \( (101, 102) \)-avoiding inversion sequence
with  $\maxid(e)-\max(e)=\maxid(g)-\max(g)-1$.
}
Therefore, an \( (101, 102) \)-avoiding inversion sequence {\( g \) with \( \maxid(g)-\max(g)=r \)} can be written uniquely as a direct sum {of \( r \)} connected \( (101, 102) \)-avoiding inversion sequences. 

\begin{example}
For the running example, we have 
\[
\psi_{\rm I}(R)=\psi_{\rm I}(R_1) \oplus \psi_{\rm I}(R_2) \oplus \psi_{\rm I}(R_3) \oplus \psi_{\rm I}(R_4) \oplus \psi_{\rm I}(R_5),
\]
where each \( \psi_{\rm I}(R_i) \) is a connect \( (101,102) \)-avoiding inversion sequence.
Details are as follows.
{
\begin{align*}
(0,0,0,0,0,3,3,3,3,4,6,7,6,6,0,0)
&=(0,0,0,0,0,3,3,0,0)\oplus (0,0,1,3,4,3,3)\\
&=(0,0,0,0,0,3,0,0) \oplus (0) \oplus (0,0,1,3,4,3,3)\\
&=(0,0)\oplus (0,0,0,3,0,0) \oplus (0) \oplus (0,0,1,3,4,3,3)\\
&=(0) \oplus (0)\oplus (0,0,0,3,0,0) \oplus (0) \oplus (0,0,1,3,4,3,3).
\end{align*}
}
\end{example}

Lastly, we define the direct sum of \( (101,021) \)-avoiding inversion sequences as follows.
{
\begin{lem}\label{lem:directII}
Let \(\je=\je_1 \je_2 \dots \je_n \) and \( \jf=\jf_1 \jf_2 \dots \jf_m \) be \( (101,021) \)-avoiding inversion sequences
with \( \first(\je)=k \) and $\first(\jf)=\ell$.
Let
\[ \jg=\jg_1 \jg_2\dots \jg_{n+m} \coloneqq 
{\je_1}\dots{\je_k} \,\jf_1 \dots \jf_m \,\tilde{\je}_{k+1} \dots \tilde{\je}_n, \]
where $\tilde{\je}_i = \je_i + m\left(1-\delta_{0,\je_i}\right)$.
Then \( \jg \) is an \( (101,021) \)-avoiding inversion sequence with 
\[ \first(\jg)=k+\ell. \] 
\end{lem}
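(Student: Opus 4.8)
The plan is to establish, in turn, that $\jg$ is a genuine inversion sequence, that it avoids $021$, and that it avoids $101$, and then to read off $\first(\jg)$ directly from the construction. Throughout I will view $\jg$ as the concatenation of three blocks of positions: the block $L=\{1,\dots,k\}$, carrying the $k$ leading zeros of $\je$; the block $M=\{k+1,\dots,k+m\}$, carrying $\jf_1\dots\jf_m$ verbatim; and the block $R=\{k+m+1,\dots,n+m\}$, carrying $\tilde{\je}_{k+1}\dots\tilde{\je}_n$. The key structural observation, which drives every step, is that the entries occupying $M$ all lie in $\{0,1,\dots,m-1\}$ (because $\jf_j<j\le m$), whereas the positive entries occupying $R$ all lie in $\{m+1,m+2,\dots\}$ (because each is a positive entry of $\je$ increased by $m$); in particular $M$ and $R$ share no common positive value, and the value $m$ never occurs in $\jg$ at all.

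Checking that $\jg$ is an inversion sequence is a block-by-block verification of $0\le \jg_i<i$: it is trivial on $L$; on $M$ it follows from $\jf_{i-k}<i-k\le i$; on $R$ it follows from $\tilde{\je}_{i-m}=\je_{i-m}+m<(i-m)+m=i$ when $\je_{i-m}>0$ and is trivial when $\je_{i-m}=0$. For $021$-avoidance I will use the characterization recalled above, namely that an inversion sequence avoids $021$ exactly when its positive entries are weakly increasing. Reading the positive entries of $\jg$ from left to right, one first meets the positive entries of $\jf$ (weakly increasing since $\jf$ avoids $021$, and all at most $m-1$), and then the positive entries of $\je$ each shifted up by $m$ (weakly increasing since $\je$ avoids $021$, and all at least $m+1$); the concatenation of two weakly increasing strings, the second of which lies entirely above the first, is weakly increasing, so $\jg$ avoids $021$.

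For $101$-avoidance, the crux is that for every positive value $v$ occurring in $\jg$ all of its occurrences lie inside a single block: they lie in $M$ if $v\le m-1$ and in $R$ if $v\ge m+1$, by the value-range observation. Since $M$ and $R$ are intervals, any index strictly between two occurrences of $v$ lies in the same block, so an occurrence of $101$ at positions $a<b<c$ with $\jg_a=\jg_c=v>\jg_b$ must have $a,b,c$ all in $M$ or all in $R$. In the first case it is literally an occurrence of $101$ in $\jf$; in the second case, writing $a'=a-m$, $b'=b-m$, $c'=c-m$ (valid indices of $\je$ exceeding $k$), we get $\je_{a'}=\je_{c'}=v-m>0$, while $\jg_b<v$ forces $\je_{b'}<v-m$ regardless of whether $\je_{b'}$ vanishes, producing an occurrence of $101$ in $\je$ — each case contradicting the hypotheses. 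Finally $\first(\jg)=k+\ell$: the first $k$ entries of $\jg$ are the leading zeros of $\je$, the next $\ell$ are the leading zeros of $\jf$, and position $k+\ell+1$ carries a positive entry, namely $\jf_{\ell+1}$ if $\ell<m$ and otherwise $\tilde{\je}_{k+1}=\je_{k+1}+m$, which is positive since $\first(\je)=k$ (in the edge case $k=n$ and $\ell=m$ one simply has $\jg=0^{\,n+m}$ and the equality $\first(\jg)=n+m=k+\ell$ still holds). The only step requiring genuine care is the $101$-avoidance argument — specifically the value-range bookkeeping that forces the three offending indices into a common block; everything else is a routine unwinding of the definitions.
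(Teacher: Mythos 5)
Your proof is correct, but it takes a more elementary route than the paper's. The paper disposes of the whole lemma in two lines by invoking the normal form \eqref{jinv-form}: a sequence is a $(101,021)$-avoiding inversion sequence exactly when it has the shape $0^{i_0}1^{j_1}0^{i_1}\cdots n^{j_n}0^{i_n}$, and the concatenation $0^k\,\jf\,\tilde{\je}_{k+1}\cdots\tilde{\je}_n$ visibly has this shape again (the positive runs of $\jf$ use values below $m$, the shifted positive runs of $\je$ use values above $m$, so the runs still appear in increasing order of value); the statement about $\first$ then follows by counting leading zeros. You instead verify the three defining properties from scratch: the inequality $0\le\jg_i<i$ block by block, $021$-avoidance via the weakly-increasing-positive-entries criterion, and $101$-avoidance via the observation that every positive value of $\jg$ lives entirely inside one of the two blocks $M$ or $R$ (with the value $m$ never occurring), so any offending triple projects to an occurrence of $101$ in $\jf$ or in $\je$. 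Your value-range bookkeeping is exactly the content hidden in the paper's phrase ``so does $\jg$,'' so your argument is really an expansion of the same idea rather than a different one; what it buys is independence from the normal-form characterization \eqref{jinv-form} (you only need the weaker $021$ criterion), at the cost of length. Your treatment of $\first(\jg)$, including the degenerate case $\jg=0^{n+m}$, matches the paper's.
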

\begin{proof}
Since $\je$ and $\jf$ is of form~\eqref{jinv-form}, so does $\jg$. Thus, $\jg$ is an \( (101,021) \)-avoiding inversion sequence.
$\je_1=\dots=\je_k=0<\je_{k+1}$ and $\jf_1=\dots=\jf_{\ell}=0<\jf_{\ell+1}$ yields \( \first(\jg)=k+\ell\). 
\end{proof}
}

{Due to Lemma~\ref{lem:directII}, we define the direct sum of \( (101,021) \)-avoiding inversion sequences as follows.}
\begin{defn} 
Let \( n \) and \( m \) be positive integers.
For any \( (101,021) \)-avoiding inversion sequences 
{
\( \je=\je_1 \je_2 \dots \je_n \) and \( \jf=\jf_1 \jf_2 \dots \jf_m\),
} 
the \emph{direct sum} of \( \je \) and \( \jf \) is the \( (101,021) \)-avoiding inversion sequence 
 {$\je \oplus \jf$} of length \( n+m \), defined by 
\[
(\je \oplus \jf)_i\coloneqq \begin{cases}
\jf_{i-\first(\je)} & \mbox{ if \( \first(\je)+1 \leq i \leq \first(\je)+m \),}\\
\je_{i-m}+m &\mbox{ if \( i>\first(\je)+m \) and \( \je_{i-m}>0 \)},\\
0 & \mbox{otherwise}.
\end{cases}
\]
An \( (101,021) \)-avoiding inversion sequence \( \je \) is said to be \emph{connected} if \( \first(\je)=1 \). Otherwise, \( \je \) is said to be \emph{disconnected}. 
\end{defn}

{
Note that \( \first(\je\oplus \jf)=\first(\je)+\first(\jf) \). From Lemma~\ref{lem:directII},} 
for any disconnected \( (101,021) \)-avoiding inversion sequence 
{\( \jg\) of length $N$,}
we can decompose it into two \( (101,021) \)-avoiding inversion sequences, \( \je\oplus \jf \), by defining 
\( \jf\coloneqq \jg_{r}\jg_{r+1} \dots \jg_{r+m-1} \), where
\( \first(\jg)=r \) and {\( m \) is the smallest positive integer such that
\( \jg_{r+m} \ge m+1\),} so that $\jf$ is the connected.
{
Also, $\je:=\jg_1 \dots \jg_{r-1} \, \jg'_{r+m}\dots\jg'_{N}$ is the \( (101, 021) \)-avoiding inversion sequence
with  $\first(\je)=r-1$, 
where $\jg'_i = \jg_i + m(\delta_{0,\jg_i}-1)$.
}
{
Therefore, an \( (101, 021) \)-avoiding inversion sequence \( \jg \) with \(\first(\jg)=r \) can be written uniquely as a direct sum of \( r \) connected \( (101, 021) \)-avoiding inversion sequences. 
}
\begin{example}
For the running example, we have 
\[
\psi_{\rm J}(R)=\psi_{\rm J}(R_1) \oplus \psi_{\rm J}(R_2) \oplus \psi_{\rm J}(R_3) \oplus \psi_{\rm J}(R_4) \oplus \psi_{\rm J}(R_5),
\]
where each \( \psi_{\rm J}(R_i) \) is a connect \( (101,021) \)-avoiding inversion sequence.
Details are as follows.
{
\begin{align*}
0\,0\,0\,0\,0\,1\,0\,0\,4\,4\,5\,9\,9\,9\,0\,0 
&=0\,0\,0\,0\,2\,2\,2\,0\,0\oplus 0\,1\,0\,0\,4\,4\,5 \\
&=0\,0\,0\,1\,1\,1\,0\,0\oplus 0 \oplus 0\,1\,0\,0\,4\,4\,5\\
&=0\,0\oplus 0\,1\,1\,1\,0\,0\oplus 0 \oplus 0\,1\,0\,0\,4\,4\,5\\ 
&=0 \oplus 0\oplus 0\,1\,1\,1\,0\,0\oplus 0 \oplus 0\,1\,0\,0\,4\,4\,5 .
\end{align*}
}
\end{example}

By the construction of each bijection, we have the following result. 

\begin{prop}
Given an \( F \)-path \( Q \) with \( \height(Q)=k \), let \( Q=Q_1 \oplus Q_2 \oplus \dots \oplus Q_{k+1} \), where each \( Q_i \) is an \( F \)-path with \( \height(Q_i) =0 \) for \( 1\leq i \leq k+1 \). Then, for any \({\rm A} \in \set{\rm P,B,S,I,J,T} \), we have
\[
\psi_{\rm A}(Q)=\psi_{\rm A}(Q_1) \oplus \psi_{\rm A}(Q_2) \oplus \dots \oplus \psi_{\rm A}(Q_{k+1}).\\
\]
\end{prop}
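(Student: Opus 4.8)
Every direct sum in the statement is formed by inserting one fixed connector between the two summands: a step $(0,1)$ for $F$-paths, a step $h$ for Schr\"oder paths, the usual $\oplus$ for permutations, the insertions of Lemmas~\ref{lem:directI} and~\ref{lem:directII} for the two classes of inversion sequences, and concatenation of subtree lists for weighted ordered trees. On $\Fp$ this is plain concatenation, hence associative, so $Q=Q_1\oplus(Q_2\oplus\cdots\oplus Q_{k+1})$, and a short induction on the number of summands reduces the Proposition to the two-factor assertion
\[
\psi_{\rm A}(Q\oplus Q')=\psi_{\rm A}(Q)\oplus\psi_{\rm A}(Q')\qquad\text{for all }Q,Q'\in\bigcup_{n\ge0}\Fp_n,
\]
equivalently $\phi_{\rm A}(X\oplus Y)=\phi_{\rm A}(X)\,(0,1)\,\phi_{\rm A}(Y)$ for all objects $X,Y$ of family $\rm A$; once this is known, associativity of each object-level $\oplus$ follows from bijectivity of $\phi_{\rm A}$, so the right-hand side of the Proposition is unambiguous. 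I will establish the two-factor identity for each $\rm A$, treating the recursively defined bijections $\phi_{\rm P},\phi_{\rm S},\phi_{\rm I}$ and the closed-form bijections $\phi_{\rm B},\phi_{\rm J},\phi_{\rm T}$ by different means.

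For $\rm A\in\{\rm P,S,I\}$ the plan is to induct on $|Q'|$. When $Q'=\emptyset$ the assertion reads $\psi_{\rm A}(Q\,(0,1))=\psi_{\rm A}(Q)\oplus\psi_{\rm A}(\emptyset)$, which is exactly the ``$(0,1)$-clause'' of the recursive definition of $\psi_{\rm A}$, since $\psi_{\rm A}(\emptyset)$ equals $\emptyset$, the permutation $1$, resp.\ the sequence $0$. For the inductive step write $Q'=\hat{Q}'(a,b)$, so $Q\oplus Q'=(Q\oplus\hat{Q}')(a,b)$ and $\psi_{\rm A}$ builds $\psi_{\rm A}(Q\oplus Q')$ from $\psi_{\rm A}(Q\oplus\hat{Q}')$ by peeling $(a,b)$ and applying one of the step-dependent operations in the proofs of Theorems~\ref{thm:P-F},~\ref{thm:S-F}, and~\ref{thm:I-F}. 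Two facts make the induction go: (i) the auxiliary maps $P\mapsto\hat P$, $\pi\mapsto\hat\pi$, $e\mapsto\hat e$ used there satisfy $\widehat{X\oplus Y}=X\oplus\hat Y$; and (ii) the step then peeled from $Q\oplus\hat{Q}'$ equals the one $\psi_{\rm A}$ peels from $\hat{Q}'$ alone. Both reduce to the additivity of $\height$ under concatenation together with $\height((0,1))=1$: this makes the statistic that $\phi_{\rm A}$ carries to $\height$ --- namely $\comp(P)$, $\block(\pi)-1$, or $\maxid(e)-\max(e)-1$ --- grow (never shrink) when a summand is prepended, so the bounded right-end piece that $\widehat{(\cdot)}$ rewrites --- the $uYhd$-tail of $P$, the trailing blocks $\omega,\tau$ of $\pi$, or the rightmost occurrence of $\max$ in $e$ --- lies entirely inside the $Y$-summand and is acted on exactly as in $Y$. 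For $\invi$ one also uses $\max(e\oplus f)=\max(e)+\max(f)$ and $\maxid(e\oplus f)=\maxid(e)+\maxid(f)$, recorded in Section~\ref{sec:D-S}. Granting (i) and (ii), the inductive hypothesis applied to $Q\oplus\hat{Q}'$ gives $\psi_{\rm A}(Q\oplus Q')=\psi_{\rm A}(Q)\oplus\psi_{\rm A}(Q')$.

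For $\rm A\in\{\rm B,J,T\}$ the closed forms make the identity a direct inspection. For $\rm B$: the $u$-segment decomposition of $B\oplus B'=\overline{B}\,B'\,\dr^{\last(B)}$ is all segments of $B$ but the last, then one segment $u$, then all segments of $B'$ with the final $\dr$-run lengthened by $\last(B)$; reading the labels off all but the last of these segments yields $\phi_{\rm B}(B)$, then $(0,1)$, then $\phi_{\rm B}(B')$. For $\rm J$: in the normal form~\eqref{jinv-form}, $\je\oplus\jf$ is $0^{\first(\je)+\first(\jf)}$, then the positive part of $\jf$, then a single symbol carrying exponent $0$, then the positive part of $\je$ shifted above it; hence $\phi_{\rm J}(\je\oplus\jf)=\phi_{\rm J}(\je)\,(0,1)\,\phi_{\rm J}(\jf)$. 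For $\rm T$: preorder visits subtrees leftmost first, so the preorder list of $T\oplus S$ --- whose subtrees are those of $T$ followed by those of $S$ --- is the root, then the non-root vertices of $S$, then those of $T$; for each non-root vertex of $S$ or $T$ that $\phi_{\rm T}$ uses, whether it is the leftmost child of an interior vertex and the weight and outdegree of its parent are the same in $T\oplus S$ as in $S$ or $T$, while the one extra vertex used --- the root of the leftmost subtree of $T$, which in $T\oplus S$ is no longer the leftmost child of the root (that is the root of the leftmost subtree of $S$) --- contributes the entry $(0,1)$; so $\phi_{\rm T}(T\oplus S)=\phi_{\rm T}(T)\,(0,1)\,\phi_{\rm T}(S)$.

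The only genuine work is bookkeeping, and the main obstacle is case $\rm S$: proving $\widehat{\pi\oplus\pi'}=\pi\oplus\hat\pi'$ and (ii) there requires first checking that the quantities $x,y,z,w$ of Lemma~\ref{lem:indecomperm} for $\pi\oplus\pi'$ are precisely those of $\pi'$ translated by $|\pi|$, and then passing through the four shapes of that lemma, each with its own index- and value-shifts in the formula for $\hat\pi$. The analogous preorder/leftmost-child verification for case $\rm T$ is the secondary obstacle. With those checks done, the induction on $|Q'|$ and the reduction to several summands finish the proof.
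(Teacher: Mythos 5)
Your proposal is correct and amounts to a careful write-up of precisely the verification the paper compresses into the single line ``By the construction of each bijection, we have the following result'': reduce to two summands, then check for each $\mathrm{A}$ that the recursive peeling (cases $\mathrm{P},\mathrm{S},\mathrm{I}$) or the segment/normal-form/preorder reading (cases $\mathrm{B},\mathrm{J},\mathrm{T}$) acts only inside the rightmost summand and produces the connector $(0,1)$ from the inserted $h$, the new top block, the inserted maximum, the extra $u$-segment, the extra zero, and the extra subtree root, respectively. One harmless imprecision: in case $\mathrm{T}$ the one extra vertex used (the root of the leftmost subtree of $T$) contributes $(0,1)$ because its parent in $T\oplus S$ is the root and hence not an \emph{interior} vertex, not because it ceases to be a leftmost child.
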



\section*{Acknowledgement}

For the first author, 
this work was partially supported by 
Basic Science Research Program through the National Research Foundation of Korea(NRF) funded by the Ministry of Education \linebreak[4]
(No.\ NRF-2021R1A6A1A10044950). 
For the second author, 
this work was partially supported by 
the National Research Foundation of Korea (NRF) grant funded by the Korea government (MSIT) \linebreak[4]
(No.\ 2021R1F1A1062356).
For the third author, 
this work was partially supported by 
the National Research Foundation of Korea (NRF) grant funded by the Korea government (MSIT) (No.\ 2021R1F1A1062356).
For the fourth author, 
this work was partially supported by 
the National Research Foundation of Korea (NRF) grant funded by the Korea government (MSIT) (No.\ 2017R1C1B2008269).



\end{document}